\long\def\blue#1{{\color{blue}#1}}
\numberwithin{equation}{section}
\numberwithin{figure}{section}
\newenvironment{lyxlist}[1]
{\begin{list}{}
{\settowidth{\labelwidth}{#1}
 \setlength{\leftmargin}{\labelwidth}
 \addtolength{\leftmargin}{\labelsep}
 }}
{\end{list}}
\theoremstyle{plain}
\newtheorem{thm}{\protect\theoremname}[section]
  \theoremstyle{remark}
  \newtheorem{rem}[thm]{\protect\remarkname}
  \theoremstyle{plain}
  \newtheorem{cor}[thm]{\protect\corollaryname}
  \theoremstyle{definition}
\DeclareMathAlphabet{\mathbbold}{U}{bbold}{m}{n}
\def\cB{\mathcal{B}}
\def\cX{\mathcal{X}}
\def\cV{\mathcal{V}}
\def\cY{\mathcal{Y}}
\def\cK{\mathcal{K}}
\def\cC{\mathcal{C}}
\def\cE{\mathcal{E}}
\def\cF{\mathcal{F}}
\def\cJ{\mathcal{J}}
\def\QQ{\mathbb{Q}}
\def\RR{\mathbb{R}}
\def\CC{\mathbb{C}}
\def\ZZ{\mathbb{Z}}
\def\VV{\mathbb{V}}
\def\PP{\mathbb{P}}
\def\EE{\mathbb{E}}
\def\cX{\mathcal{X}}
\def\cM{\mathcal{M}}
\def\fI{\mathfrak{I}}
\def\ul{\underline}
\def\uz{\underline{z}}
\def\uud{\underline{\underline{d}}}
\def\cO{\mathcal{O}}
\def\cD{\mathcal{D}}
\def\cS{\mathcal{S}}
\def\cN{\mathcal{N}}
\def\cB{\mathcal{B}}
\def\cW{\mathcal{W}}
\def\ay{\mathbf{i}}
\def\x{\mathsf{X}}
\def\y{\mathsf{Y}}
\def\st{\mathsf{T}}
  \providecommand{\corollaryname}{Corollary}
  \providecommand{\remarkname}{Remark}
\providecommand{\theoremname}{Theorem}
\theoremstyle{remark}
\newtheorem{para}{\bf}[subsection]
\newtheorem{example}[para]{\bf Example}
\newtheorem{rmk}[para]{\bf Remark}
\newtheorem{lemma}[para]{\bf Lemma}
\newtheorem{prop}[para]{\bf Proposition}
\theoremstyle{definition}
\newtheorem{defn}[para]{Definition}
 \newcommand {\C} {{\mathbb C}}
 \newcommand {\R} {{\mathbb R}}
 \newcommand {\Q} {{\mathbb Q}}
  \newcommand {\D} {{\mathcal D}}
  \newcommand {\M} {{\mathcal M}}
   \newcommand {\del} {{\partial}}
 \newcommand {\X} {{\mathscr X}}
 \newcommand {\ch}{\mathrm{CH}}
\newcommand {\chc}{\mathbf{CH}}
\newcommand{\Spec}{\text{\rm Spec}}
\newcommand{\CH}{\mathrm{CH}}
\newcommand{\ind}{\text{\rm ind}}
\newcommand{\I}{\mathbf{i}}
\newcommand{\ol}{\overline}
\newcommand{\bs}{{\backslash}}
\newcommand{\rat}{\text{\rm rat}}
\newcommand{\w}{\omega}
\newcommand{\rar}{\rightarrow}
\renewcommand{\sp}{{\rm sp}}
\begin{document}

\title[$K$-theory elevator]{\bf Specialization of cycles and the $K$-theory elevator}

\author[del \'Angel]{P. Luis del \'Angel R.}
\address{CIMAT, A.C., Callej\'on de Jalisco, S/N, Guanajuato, Gto., M\'EXICO}
\email{luis@cimat.mx}

\author[Doran]{C. Doran}
\address{Department of Mathematics, University of Alberta, Edmonton, Alberta, T6G 2G1, CANADA}
\email{charles.doran@ualberta.ca}

\author[Kerr]{M. Kerr}
\address{Department of Mathematics, Washington University in St. Louis, St. Louis, MO 63130, USA}
\email{}

\author[Lewis]{J.  Lewis}
\address{Department of Mathematics, University of Alberta, Edmonton, Alberta, T6G 2G1, KANADA}
\email{lewisjd@ualberta.ca}

\author[Iyer]{J. Iyer}
\address{The Institute of Mathematical Sciences, CIT Campus, Taramani, Chennai 600113, INDIA}
\email{jniyer@imsc.res.in}

\author[M\"uller-Stach]{S. M\"uller-Stach}
\address{Institut f\"ur Mathematik, Fachbereich 08, Johannes Gutenberg Universit\"at Mainz, 55099 Mainz, DEUTSCHLAND}
\email{mueller-stach@uni-mainz.de}

\author[Patel]{D. Patel}
\address{Department of Mathematics, Purdue University, West Lafayette, IN 47907, USA}
\email{patel471@purdue.edu}

\subjclass[2000]{Primary: 14C25, 19E15;  Secondary: 14C30}

\begin{abstract} 
A general specialization map is constructed for higher Chow groups and used to prove a ``going-up'' theorem for 
algebraic cycles and their regulators.  The results are applied to study the degeneration of the modified diagonal cycle of Gross and Schoen, and of the coordinate symbol on a genus-2 curve.
\end{abstract}

\maketitle

{\it They have ladders that will reach further, but no one will climb them.} -- A.  Sexton, ``Riding the Elevator into the Sky''

\tableofcontents

\section{\bf Introduction}

The aim of this paper is to describe limiting invariants for generalized normal functions of geometric origin at a singularity of the underlying period mapping.  To describe the underlying geometry, let $\bar{\pi}:\mathcal{X}\to\mathcal{S}$ be a proper, dominant morphism of smooth quasi-projective varieties over $\mathbb{C}$, with $\dim\mathcal{S}=1$
and smooth restriction $\pi:\mathcal{X}^{*}\to\mathcal{S}^{*}=\mathcal{S}\backslash\{s_{0}\}$.
Write $X_{s}=\bar{\pi}^{-1}(s)$, and set $\mathbb{V}:=R^{2p-r-1}\pi_{*}\mathbb{Q}(p)$,
with monodromy operator $T$ about $s_{0}$. Consider a higher Chow
cycle $\mathcal{Z}^{*}\in\mathrm{CH}^{p}(\mathcal{X}^{*},r)_{\mathbb{Q}}\cong H_{\mathcal{M}}^{2p-r}(\mathcal{X}^{*},\mathbb{Q}(r))$,
and if $r=0$ assume that the restrictions $Z_{s}=\imath_{s}^{*}\mathcal{Z}^{*}$
are homologous to zero. Then there is an associated (``higher'',
if $r>0$) admissible normal function $\nu\in\mathrm{ANF}_{\mathcal{S}^{*}}^{r}(\mathbb{V})$,
given by $\mathrm{AJ}_{X_{s}}^{p,r}(Z_{s})\in\mathrm{Ext}_{\mathrm{MHS}}^{1}\left(\mathbb{Q},H^{2p-r-1}(X_{s},\mathbb{Q}(p))\right)$
on fibers of $\pi$. 

General formulas for the regulator maps $\mathrm{AJ}^{p,r}$, first
constructed by Bloch \cite{B5}, were given in \cite{KLM}. They can
often be difficult to compute directly; even for showing that the
normal function is nonzero, one often makes do with the associated
infinitesimal invariant, inhomogeneous Picard-Fuchs equation, or (if
$r>0$) the presence of a nontorsion singularity at $s_{0}$. In the
absence of a singularity, one can also consider the \emph{limit of
the normal function} at $s_{0}$: indeed, if the cycle class $\mathrm{cl}_{\mathcal{X}^{*}}^{p,r}(\mathcal{Z}^{*})\in\mathrm{Hom}\left(\mathbb{Q},H^{2p-r}(\mathcal{X}^{*},\mathbb{Q}(p))\right)$
has vanishing residue on $X_{s_{0}}$, then $\nu$ extends to $\mathcal{S}$,
with $\nu(s_{0})$ in the generalized Jacobian of $\ker(T-I)\subseteq H_{\text{lim}}^{2p-r-1}(X_{s},\mathbb{Q}(p))$. 

A useful technique for computing this limiting value is given by \emph{specialization}:
if $\mathcal{Z}^{*}$ lifts to $\mathcal{Z}\in\mathrm{CH}^{p}(\mathcal{X},r)_{\mathbb{Q}}$,
then we obtain a class $\imath_{s_{0}}^{*}\mathcal{Z}$ in the motivic
cohomology $H_{\mathcal{M}}^{2p-r}(X_{s_{0}},\mathbb{Q}(p))$. This
formalism, and its relation to the ``naive'' specialization to $\mathrm{CH}^{p}(X_{s_{0}},r)_{\mathbb{Q}}$,
is discussed in detail in $\S$\ref{S3.0}. As a simple example, one
can think of a difference of sections of a family of elliptic curves
that degenerate to a nodal rational curve: the class of the naive
specialization is always zero, whereas the specialization into motivic
cohomology takes values in $\mathbb{C}^{*}$.

Given the specialized cycle $\imath_{s_{0}}^{*}\mathcal{Z}$, then,
we can use of a semi-simplicial hyperresolution of $X_{s_{0}}$
to compute its Abel-Jacobi class in absolute Hodge cohomology $H_{\mathcal{H}}^{2p-r}\left( X_{s_0},\mathbb{Q}(p)\right) \cong \mathrm{Ext}_{\mathrm{MHS}}^{1}\left(\mathbb{Q},H^{2p-r-1}(X_{s_{0}},\mathbb{Q}(p)\right) .$
The main general result of this paper (Theorem \ref{limThm}) is that
the image of this class under the Clemens retraction computes $\nu(s_{0})$.
Note that the case of a semistable degeneration has been treated carefully
for $r=0$ \cite{GGK}, so we concentrate in $\S$\ref{MS5} on the
\emph{higher} normal function setting, which behaves a bit differently.

The even-numbered sections are devoted to worked examples and special
cases, all of which exhibit the phenomenon referred to in the title:
this is a 7-author paper, and some of us prefer ``$K$-theory elevator'',
others ``going up''. Whatever one wishes to call it, we all felt
it merited a systematic exposition, given the many contexts in which
it arises (e.g. \cite{JW}, \cite{dS}, \cite{DoranKerr}, \cite{Ke-K3},
\cite{GGK}, \cite{Collino}). In the event that $X_{s_{0}}$ is a
normal crossing variety, and $\imath_{s_{0}}^{*}\mathcal{Z}$ ``comes
from'' its $c^{\text{th}}$ coskeleton (with desingularization $Y^{[c]}$),
the basic point is that we can interpret part of $\nu(s_{0})$ as
the regulator of a class in $\mathrm{CH}^{p}(Y^{[c]},r+c)_{\mathbb{Q}}$.
So in effect one \emph{goes up} from $K_{r}^{\text{alg}}(X_{s})$
to $K_{r+c}^{\text{alg}}(Y^{[c]})$.

The special case we study in $\S$\ref{S2} is a particular kind of
semistable degeneration, with $X_{s_{0}}$ the product of a nodal
rational curve $Q_{0}$ by a smooth variety. We briefly recall results
from \cite{KLM,KL}, and then use them to directly compute the limit
of the fiberwise regulator maps (Theorem \ref{KT}). This is applied
in $\S$\ref{toy} to compute the limit of a normal function arising
from a family of $K_{2}$ classes on elliptic curves. A related example
comes much later, in $\S$\ref{S6}, where we specialize a $K_{2}$
class on a family of genus two curves. The resulting number-theoretic
identities, \eqref{eq9} and \eqref{eq10}, had been proposed by M.
Mari\~no in recent private correspondence with two of the authors,
on the basis of the t' Hooft limit of a far-reaching conjectural relationship
between the spectrum of a quantum curve and the enumerative geometry
of its mirror \cite{Ma}.

But the motivation for this paper goes back much further, to the seminal
work of Collino \cite{Collino}, based on a fascinating idea which
he attributes to Bloch. Let $C/\mathbb{C}$ be a general genus 3 curve,
with Jacobian $J(C)$. Then the Ceresa cycle $\xi_{0}:=C-C^{-}\in\mathrm{CH}_{\text{hom}}^{2}(J(C))$
defines a non-torsion element of the Griffiths group $\mathrm{Griff}^{2}(J(C))$
\cite{Ceresa}. Collino considers a one-parameter deformation of $J(C)$,
degenerating to a singular variety ``isogenous to'' $J(D)\times Q_{0}$,
where $D$ is a general genus 2 curve. In the sense described above,
$\xi_{0}$ ``goes up'' to a $K_{1}$ class $\xi_{1}\in\mathrm{CH}^{2}(J(D),1)$,
which turns out (by an analysis of the infinitesimal invariant as
$D$ varies) to be \emph{regulator indecomposable}. This gives an
alternative proof of the nontriviality of $\xi_{0}$.

A further degeneration to $E\times Q_{0}\times Q_{0}$ (up to isogeny),
for some general elliptic curve $E$, leads (by iteration of the ``going
up'' procedure) to a non-torsion class $\xi_{2}\in\mathrm{CH}^{2}(E,2)$.
This can be identified with an Eisenstein symbol (cf. \cite[Ex. 10.1]{DoranKerr})
in the sense of Beilinson, and shown to be nontorsion in this way;
or one can argue as in \cite{Collino}. Finally, degenerating the
elliptic curve to a $Q_{0}$ leaves us with a class $\xi_{3}\in\mathrm{CH}^{3}(\text{Spec}(\mathbb{C}),2)$
(in fact defined over $\mathbb{Q}(i)$). Alternatively, one may degenerate
$C$ directly to a rational curve with three nodes and go directly
to $\xi_{3}$ as in \cite[$\S$IV.D]{GGK}, where the regulator of
this class is computed (and shown to be nontorsion) directly.

In $\S$\ref{S3}, the first step ($K_{0}\rightsquigarrow K_{1}$)
of this procedure is made much more precise, and applied to study
``going up'' for the modified (small) diagonal cycle $\Delta\in\mathrm{CH}_{\text{hom}}^{2}(C\times C\times C)$
\cite{GrossSchoen}, which is closely related to Ceresa's cycle. In
particular, we obtain a regulator indecomposable cycle in $\mathrm{CH}^{2}(D\times D,1)$,
and a new approach to the nontriviality of $\Delta$ in the Griffiths
group as a corollary (cf. Theorem \ref{thm3.6}).

A couple of comments on notation are in order. With the exception of
parts of $\S\S$\ref{S2}-\ref{S3.0}, the cycle groups in this paper
are taken with $\mathbb{Q}$-coefficients, denoted by a subscript
$\mathbb{Q}$. (This is a basic requirement for Hanamura's construction \cite{Hanamura}.) 
When describing the construction of motivic cohomology,
we also require intersection conditions on cycles (and higher cycles)
which permit them to be pulled back. In particular, if $Y_{I}=Y_{i_{1}}\cap\cdots\cap Y_{i_{\ell}}$
is a substratum of a normal crossing variety, and $Z\in Z^{p}(Y_{I},r)$
is a higher Chow precycle, we might impose the condition that $Z$
properly intersect the products of all $Y_{J}$ ($J\supset I$) and
all faces of $\square^{r}$. Such conditions will be denoted throughout
by a subscript ``$\#$'' for brevity.

\subsection*{Acknowledgments}

The authors acknowledge partial support under NSF FRG grant DMS-1361147
(Kerr), NSF grant DMS-1502296 (Patel), grants from the Natural Sciences and Engineering Research Council of Canada (Doran, Lewis), and DFG grant SFB/TRR 45 (M\"uller-Stach). We thank M. Mari\~no
for bringing \cite{Ma} to our attention, and  D. Ramakrishnan for suggesting to look at the degeneration of the modified diagonal cycle.

\section{\bf A first view of going up: semi-nodal degenerations} \label{S2}

We begin by providing a concrete view of ``going up'' in the very simplest setting:  that of a semi-stable degeneration with singular fiber the product of a smooth variety and a nodal rational curve.  In addition to setting the stage for $\S\S$\ref{S3.0}-\ref{S3}, this should provide the reader with some idea of how the general formulation of limiting regulators presented in $\S$\ref{MS5} was arrived at, and how to ``decrypt'' that construction.
 
\subsection{Bloch's higher Chow groups}\label{Not}
The higher Chow groups are an algebraic version of ordinary simplicial Borel-Moore homology.
Given $W/\C$ quasi-projective, let
$Z^p(W)$ denote the free abelian group generated by subvarieties of
codimension $p$  in $W$. 
Consider the ``algebraic $r$-simplex''
\[
\Delta^r = {\Spec}\biggl\{\frac{{\C}[t_0,\ldots,t_r]}
{\big(1-\sum_{j=0}^r t_j\big)}\biggr\} \simeq {\C}^r,
\]
and put
\[
Z_{\Delta}^p(W,r) =  \biggl\{\xi\in Z^p(W\times \Delta^r)\ \biggl|\ 
\begin{matrix}\xi \ \text{\rm meets\ all\ faces}\\
 \{t_{i_1} =\cdots= t_{i_\ell} = 0,\ \ell \geq 1\}\\
\ \text{\rm properly}\end{matrix}\biggr\}.
\]
Denoting by $\partial_j : Z_{\Delta}^p(W,r) \to Z_{\Delta}^p(W,r-1)$ the restriction to $j$-th facet
$t_j=0$, we note that $\del = \sum_{j=0}^r (-1)^j\partial_j : Z_{\Delta}^p(W,r)\to Z_{\Delta}^p(W,r-1)$ 
satisfies $\del^2 = 0$.
\begin{defn}
${\CH}^r(W,m) :=$ homology of $\left(Z_{\Delta}^{r}
(W,\bullet),\del\right)$ at $\bullet = m$.
\end{defn}

\subsection{Alternate take: Cubical version}
 Let $\square^r:=({\PP^1} \bs
\{1\})^r $, with coordinates $z_i$, and $\partial_{i}^{0},\ \partial_{i}^{\infty}$ 
the restriction maps to the facets $z_{i}=0,\ z_{i}=\infty$ 
respectively. The rest of the definition is completely
analogous (with $c^p(W,r)$ denoting cycles meeting all faces properly) 
except that one has to divide out degenerate cycles.
More specifically, let $ {\rm Pr}_j : \square^r \to \square^{r-1}$ be the
projection forgetting the $j$th factor. Then the degenerate cycles
are the subgroup
\[
d^p(W,r) := \sum_{j=0}^r {\rm Pr}_j^{*} \big(c^p(W,r-1)\big) \subset c^p(W,r),
\]
and we take $Z^p(W,r):=c^p(W,r)/d^p(W,r)$ with differential 
\[
\del = \sum_{j=1}^r(-1)^{j-1}\big(\del_j^0-\del_j^{\infty}\big) : Z^p(W,r) \to Z^p(W,r-1).
\]
By \cite[Thm. 4.7]{Levine2}, the simplicial and cubical complexes are quasi-isomorphic (with $\mathbb{Z}$-coefficients),
so that $$H_r\left( Z^p(W,\bullet)\right) \cong \mathrm{CH}^p(W,r).$$
\blue{
\begin{rmk}
In \cite{Hanamura}, Hanamura defines Chow cohomology groups $\chc^p(W,r)$ for quasi-projective varieties through a hypercovering, assuming resolution of singularities for varieties over the ground field. In the case of smooth varieties this coincides with Bloch's higher Chow groups. See the discussion below Remark \ref{rmk:Hanamura} for details. 
\end{rmk}
}

\subsection{The currents}
If $(z_1,...,z_r)\in \square^r$ are affine coordinates, set
\[
\mathbf{T}_r := (2\pi\I)^r T_r := (2\pi \I)^r \delta_{[-\infty,0]^r}, \;\;\;\;\;\; \Omega_r :=\int_{\square^r}\bigwedge_{j=1}^r d\log z_j,\;\;\text{and}
\]
\[
R_r := \int_{\square^r}\log z_1 \bigwedge_{j=1}^r d\log z_j -(2\pi\I)\int_{[-\infty,0]\times \square^{r-1}}\log z_2 \bigwedge_{j=3}^{r}d\log z_j
+\cdots
\]
\[
+ (-2\pi\I)^r\int_{[-\infty,0]^{r-1}\times\square^1}d\log z_r.
\]
For $\xi\in Z^p(X,r)$,
let $\pi_1 : |\xi|\subset X\times \square^r \to X$, $\pi_2 : |\xi|\subset X\times \square^r \to \square^r$.
We put 
\begin{equation} \label{Currents}
R_{\xi} = (\pi_{1,\ast}\circ \pi_{2}^{\ast}) R_r,\ 
\Omega_{\xi} = (\pi_{1,\ast}\circ \pi_{2}^{\ast})\Omega_r,\
T_{\xi} = (\pi_{1,\ast}\circ \pi_{2}^{\ast})T_r,
\end{equation}
and $\mathbf{T}_{\xi} = (2\pi \I)^r T_{\xi}$. Recall that in the Deligne cohomology complex, 
$$
{\M}^{\bullet}_{\D} = { \text{\rm Cone}\big\{{\mathcal{C}_X^{2p+\bullet}}(X,
{\ZZ}(p)) \oplus 
F^{p}\D_{X}^{2p+\bullet}(X)\to 
\D_{X}^{2p+\bullet -1}(X)\big\}[-1] ,}
$$
the differential $D$ is given by
\[
D\left( (2\pi\I)^{p-r}\left( \mathbf{T}_{\xi},\Omega_{\xi},R_{\xi}\right)\right)
= (2\pi\I)^{p-r}\left(dT_{\xi},d\Omega_{\xi},\mathbf{T}_{\xi}-\Omega_{\xi} -dR_{\xi}\right) .
\]
\[
= (2\pi\I)^{p-r+1}\left( \mathbf{T}_{\del \xi},\Omega_{\del \xi},R_{\del \xi}\right) ;
\]
the resulting cohomology at $\bullet = -r$
is $H^{2p-r}_{\D}(X,{\ZZ}(p))$. 
To guarantee that the currents in \eqref{Currents} are defined, we have to restrict to a subcomplex $Z^p_{\R}(X,\bullet)$ of cycles meeting real faces of $[-\infty,0]^m$ properly.  The main results we shall need are summarized in:
\begin{thm}
\begin{enumerate}[(i)]
\item \cite{KLM} The formula $\xi \mapsto (2\pi{\I})^{p-r}\left( \mathbf{T}_{\xi},\Omega_{\xi},R_{\xi}\right)$ induces a morphism of (cohomological) complexes
$$
Z_{\R}^{p}(X,-\bullet) \to {\M}^{\bullet}_{\D}.
$$
\item \cite{KL} The inclusion $Z_{\R}^{p}(X,\bullet) \hookrightarrow Z^{p}(X,\bullet)$ is a rational quasi-isomorphism.
\end{enumerate}
\end{thm}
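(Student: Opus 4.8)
The plan is to treat the two parts separately, as they are genuinely different in nature: part (i) is an explicit current-theoretic computation, while part (ii) is a moving-lemma-type argument.

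For part (i), I would verify directly that the assignment $\xi \mapsto (2\pi\I)^{p-r}(\mathbf{T}_\xi, \Omega_\xi, R_\xi)$ is compatible with the differentials, i.e. that the displayed identity $D\left((2\pi\I)^{p-r}(\mathbf{T}_\xi,\Omega_\xi,R_\xi)\right) = (2\pi\I)^{p-r+1}(\mathbf{T}_{\partial\xi},\Omega_{\partial\xi},R_{\partial\xi})$ holds, and that degenerate cycles map to (coboundaries or) zero so the map descends to $Z^p(X,\bullet)$. First I would establish the three component identities $dT_\xi = \mathbf{T}_{\partial\xi}$ (up to the $2\pi\I$ normalization), $d\Omega_\xi = \Omega_{\partial\xi}$, and the crucial one $\mathbf{T}_\xi - \Omega_\xi - dR_\xi = R_{\partial\xi}$ for the universal currents $T_r,\Omega_r,R_r$ on $\square^r$; the general case then follows by pushing forward along $\pi_{1,*}\pi_2^*$, using that $\pi_1$ is proper on $|\xi|$ so the pushforward of currents is defined, and that $d$ commutes with proper pushforward. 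The identity $d\Omega_r = 0$ on $\square^r$ and $d\mathbf{T}_r = \mathbf{T}_{\partial\square^r}$ are standard (Stokes for the current of integration over $[-\infty,0]^r$); the content is the $R_r$ identity, which is an iterated integration-by-parts / Stokes computation peeling off one factor at a time — this is exactly the telescoping structure built into the definition of $R_r$, and each boundary term $z_j = 0$ contributes the next summand while $z_j = \infty$ (or $z_j = 1$, excluded) contributes nothing because $d\log z_j$ has no residue there and $\log z_j$ stays on the appropriate branch. For the descent to non-degenerate cycles, a cycle of the form $\mathrm{Pr}_j^*\eta$ has a coordinate $z_j$ that is a free $\PP^1$-parameter, and one checks the corresponding currents vanish after projection (the fiber integral of $d\log z_j$ over $\PP^1\setminus\{1\}$, suitably interpreted, is $0$; similarly for $R_r$). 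Convergence of all these currents as locally integrable forms is what forces the restriction to $Z^p_\R(X,\bullet)$: meeting the real faces $[-\infty,0]^m$ properly guarantees the integrals defining $R_\xi, T_\xi$ converge.

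For part (ii), the statement is that $Z^p_\R(X,\bullet) \hookrightarrow Z^p(X,\bullet)$ induces an isomorphism on homology after $\otimes\Q$. I would prove this by a Bloch-style moving lemma: given a cycle $\xi \in Z^p(X,r)$, one wants to move it by rational equivalence (within the higher-cycle complex, i.e. homologously) into one that also meets all real faces $[-\infty,0]^m$ properly, and similarly for bounding chains. The mechanism is the standard one — translation in the $\square^r$ directions by a generic element of a suitable algebraic group acting on $\square^r \cong (\PP^1\setminus\{1\})^r$ (or on the simplex), combined with a spreading/generic-position argument over the affine parameter space of such translations, yielding a cycle in $X\times\square^r\times\mathbb{A}^N$ whose generic fiber lies in $Z^p_\R$ and whose special fibers interpolate between $\xi$ and the moved cycle; the two ends then differ by a boundary in the total complex. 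The rational coefficients enter because the group action and the genericity argument produce a cycle that is correct only up to a bounded multiple, or because one uses a projector/averaging; this is the same reason Bloch's original moving lemma for higher Chow groups is stated rationally (or with localization in the index). I would present this as: (a) construct the moving family, (b) check the generic member meets real faces properly by dimension count, (c) specialize to get the chain homotopy, (d) conclude quasi-isomorphism on the level of complexes, hence iso on homology $\otimes\Q$.

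The main obstacle is part (ii): the moving lemma is the technical heart, and one must be careful that the moving process respects \emph{both} the proper-intersection-with-all-faces condition already built into $Z^p(X,r)$ \emph{and} the new real-face condition, simultaneously, without destroying either — i.e. the translations must be chosen to act trivially on the combinatorics of which faces are hit while generically improving transversality with the real loci. Since this is exactly the content of \cite{KL}, in the paper itself I would simply cite it; in a self-contained writeup the real work is verifying that a single algebraic family of translations can be arranged to achieve proper intersection with the (finitely many) real faces of all the cubes $[-\infty,0]^m$ appearing, uniformly, and that the resulting homotopy is well-defined on the quotient by degenerate cycles. Part (i), by contrast, is a bounded, if somewhat intricate, bookkeeping exercise with currents, and its only subtlety is keeping track of the powers of $2\pi\I$ and signs so that the map is genuinely a map of complexes on the nose.
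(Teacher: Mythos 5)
There is nothing in this paper to compare your argument against: the theorem is quoted verbatim from the literature, part (i) from \cite{KLM} and part (ii) from \cite{KL}, and the present paper offers no proof (nor do the applications in $\S$2 require more than the statement plus the current identity $dR_n=\Omega_n-(2\pi\I)^nT_n-2\pi\I R_{\del\square^n}$, which the authors also simply cite). Judged against those references, your sketch of (i) is essentially the actual argument: one proves the universal identities for $T_r,\Omega_r,R_r$ on $\square^r$ (the $R_r$ identity being the iterated Stokes/telescoping computation you describe), pushes forward along $\pi_{1,*}\pi_2^*$ over $|\xi|$ -- with the real-face condition in $Z^p_{\R}$ exactly what makes these pullback--pushforward currents defined -- and checks vanishing on degenerate cycles and compatibility of signs and powers of $2\pi\I$. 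For (ii) you correctly identify the content as a moving lemma and correctly flag it as the technical heart; the one caveat is that your proposed mechanism (translation by a generic element of a group acting on $\square^r$) cannot be taken literally, since $\square^r=(\PP^1\setminus\{1\})^r$ admits no transitive action preserving the face structure -- the automorphisms available fix $1$ and at best swap or scale $\{0,\infty\}$ -- so the homotopies in \cite{KL} are built from more ad hoc multiplicative/scaling families rather than honest translations, though the overall strategy (algebraic family interpolating between $\xi$ and a moved representative, generic member in real good position, specialization giving the chain homotopy, rational coefficients from the averaging) is the right shape. Since you state you would cite \cite{KL} in any case, this is exactly what the paper does, and your write-up is consistent with it.
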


In view of (ii), we shall work with higher Chow groups with $\mathbb{Q}$-coefficients $\mathrm{CH}^p(X,r)_{\mathbb{Q}}$ for the remainder of this section.

\subsection{\bf A key prototypical situation}\label{S1}

Let $\Delta \subset \C$ be a disk 
centered at $0\in \Delta$, with $\Delta^* = \Delta \bs \{0\}$, and consider the diagram
\begin{equation}\label{E1}
\begin{matrix}
X&\hookrightarrow&\ol{X}\\
f\big\downarrow\ &&\ \big\downarrow \ol{f}\\
\Delta^*&\hookrightarrow&\Delta,
\end{matrix}
\end{equation}
where $\ol{f}$ is a proper family of complex projective varieties of (relative) dimension $d$, 
and further, $f$ is smooth. This should be seen as a restriction of a global setting
\begin{equation}\label{E2}
\begin{matrix}
\X&\hookrightarrow&\ol{\X}\\
\big\downarrow&&\big\downarrow\\
B&\hookrightarrow&\overline{B},
\end{matrix}
\end{equation}
where all varieties are smooth and quasi-projective,
$\overline{B}$ is a smooth scheme of dimension 1, and $\X \to B$ is smooth and proper, with $\Delta\subset \overline{B}$ and
$\Delta^* = B\cap \Delta$.
Put $X_t = \ol{f}^{-1}(t)$, for $t\in \Delta$. Obviously $X_t$ is smooth projective for $t\in \Delta^*$, and we can consider the monodromy operator $T\in \mathrm{Aut}\left( H^{2p-r-1}(X_t)(p)\right)$. 
Let us assume that $X_0$ is reduced and of the form $Y_0 \times Q_0$, where $Y_0$ is smooth, projective, 
and $Q_0$ is a rational curve with a single node as singular set.\footnote{A similar story holds if $Q_0$ is replaced by a rational curve with multiple nodes.} In particular, $T$ is unipotent.

Now a cycle $\xi \in \CH^p(\ol{\X},r)_{\Q}$ can be assumed to meet all fibers $\{X_t\}_{t\in \Delta}$
properly; and setting $\xi_t := X_t \cdot \xi$, we will assume that $\xi_t$ belongs to $\CH_{\hom}^p(X_t,r)_{\Q}$
for $t\in \Delta$. For $t=0$, additional conditions will be imposed in $\S$\ref{Snew} below, in order that $\xi_0$ furnishes an element of Chow \emph{cohomology} of $X_0$. 

Recall that for $t\in \Delta^*$ we have the Abel-Jacobi  invariant
\[
\mathrm{AJ}(\xi_t) \in J^{p,r}\big(X_t\big) \simeq \frac{\big[ F^{d-p+1}H^{2d-2p+r+1}(X_t,\C) \big]^{\vee}}{H_{2d-2p+r+1}(X_t,\Q)(p)},
\]
given by the functional
\begin{equation}\label{ER} \omega_t \mapsto  (2\pi\I)^{r-m}\biggl( R_m(\xi_t) +
(2\pi\I)^m\int_{\del^{-1}(T_m(\xi_t))}\biggr)(\w_t).
\end{equation}
modulo periods, on test forms $\w_t\in F^{d-p+1}A_{d\text{-closed}}^{2d-2p+r+1}(X_t)$.
Here $T_r(\xi_t)$ is $T_{\xi_t} = Pr_{X_t}(\xi_t\cap \{X_t\times [-\-\infty,0]^r\})$, and $R_r (\xi_t )=R_{\xi_t}$;
writing them this way will clarify the computation below.

Consider the (co)homological situation on $X_0$.  First of all, if $p_0\in Q_0$ is the node, then
$Q_0\bs \{p_0\} = \C^*$; write $S^1$ for the unit circle.  Working with $\Q$-coefficients, we have
\[
\Q(1) \cong H^1(Q_0)(1) \underset{\longleftarrow}{\cong} H^1_c(\C^*)(1) \cong H_1 (\C^*) = \Q\langle S^1 \rangle
\]
with duals
\[
\Q(-1) \cong H_1(Q_0)(-1) \underset{\longrightarrow}{\cong} H_1^{\mathrm{BM}}(\C^*)(-1) \cong H^1(\C^*) = \Q\langle \tfrac{\mathrm{dlog}(z)}{2\pi\I}\rangle .
\]
(One may also view $(-\infty,0)$ as the generator of the untwisted Borel-Moore homology group $H_1^{\mathrm{BM}}(\C^*)$.) 
The perfect pairing
\begin{equation}\label{EQ101}
\{ H^{2p-r-2}(Y_0)(p)\otimes H^1(Q_0) \} \times \{H^{2d-2p+r}(Y_0)(d-p)\otimes H^1(\C^*)\} \to \Q
\end{equation}
may thus be interpreted via intersection or integration (on $X_0$), with the second factor  identified with a summand of \emph{homology} (of $X_0$). The plan is to view the limiting cycle $\xi_0$  as defining an element in Chow \emph{cohomology}, with Abel-Jacobi invariant in the generalized Jacobian of the \emph{first} factor of \eqref{EQ101}.

\subsection{The limiting regulator} \label{Snew}
We seek a formula for
\begin{equation}\label{limiteq}
\mathrm{AJ}(\xi_0) := \lim_{t\to 0}\mathrm{AJ}(\xi_t) \in J^{p,r}(X_0 ) ,
\end{equation}
where
\[
J^{p,r}(X_0) := \mathrm{Ext}^1_{\mathrm{MHS}}\left( \Q,H^{2p-r-1}(X_0)(p)\right) \cong \mathrm{Ext}^1_{\mathrm{MHS}}\left( \Q,\ker(T-I)(p)\right)
\]
is the ``limiting generalized Jacobian''.  (The precise sense\footnote{To give a brief glimpse of the idea: the generalized Jacobian bundle $\cup_{t\in\mathcal{S}^*} J^{p,r}(X_t)$ admits a canonical extension across the origin (cf. $\S$5.3), to which (by Theorem 5.2a) the section $\mathrm{AJ}(\xi_t)$ extends holomorphically.  The value in the fiber over the origin is what we call $\lim_{t\to 0}\mathrm{A}(\xi_t)$. This may be computed by taking limits of pairings with families of test forms representing sections of the dual canonically extended cohomology bundle (cf. Cor. 5.3).} in which the limit \eqref{limiteq} is to be interpreted is discussed in $\S$5.)  Here we are mainly interested in the K\"unneth component
\begin{equation} \label{N*}
\ul{\mathrm{AJ}}(\xi_0)\in \frac{\left(\left[ F^{d-p}H^{2d-2p+r}(Y_0,\C)\right] \otimes
H^1(\C^*,\C)\right)^{\vee}}{H_{2d-2p+r}(Y_0,\Q)(-d+p)\otimes \Q\langle S^1\rangle}
\end{equation}
corresponding to $H^1(\C^*)$ (rather than $H^0(\C^*)$).

We shall use as ``test form''
\begin{equation} \label{N**}
\w_0 = \tfrac{1}{2\pi\mathbf{i}} \eta_0 \wedge \Omega_{1} ,
\end{equation}
where $\eta_0 \in F^{d -p}A^{2d-2p+r}(Y_0,\C)$ is closed and $\Omega_1 =  \mathrm{dlog} z_1.$
Note that $\w_0$ is a limit of classes $\w_t\in F^{d-p+1}H^{2d-2p+r+1}(X_t,\C)$ as 
$t\mapsto  0$. This is a classical result stemming from an explicit description of the
canonical extension of the bundle with fibers $H^{2d-2p+r+1}(X_t,\C)$ for $t\ne 0\in \Delta$
(cf. \cite[p. 190]{Z} or \cite[III.B.7]{GGK}).

Next we impose several requirements on $\xi$ at $t=0$:  first, that $\xi$ meet properly $X_0 \times \square^r$, $\text{sing}(X_0)\times \square^r$, and all their subfaces. We can then ``naively'' define $\xi_0$ by using the canonical desingularization $\tilde{X}_0 :=Y_0 \times \mathbb{P}^1 \to Y_0 \times Q_0 \subset \overline{\mathscr{X}}$ (sending $\{0,\infty\}$ to the node $P\in Q_0$) to pull $\xi$ back to $\tilde{\xi}_0$ followed by push-forward under $\tilde{X}_0 \twoheadrightarrow X_0$ to $\mathrm{CH}^p(X_0,r)$. But this process factors through the Chow \emph{cohomology} group
\[
\chc^p(X_0,r) := H^{-m}\left\{ \mathrm{Cone} \left( Z^p (\tilde{X}_0,\bullet )\overset{\imath_0^* - \imath_{\infty}^*}{\longrightarrow}Z^p(Y_0,\bullet)\right)[-1]\right\}
\]
and the image by $\chc^p(X_0,r)\to \mathrm{CH}^p(X_0,r)$ has no invariant in \eqref{N*}. So it is appropriate to consider $\xi_0$ as an element of $\chc^p(X_0,r)$  (and thereby view $T_{\xi_0} = \mathrm{Pr}_{X_0}\left(\xi_0 \cap \{X_0 \times [-\infty,0]^m\}\right)$ in $F^r H^{2r-m}(X_0,\Q)=\{0\}$).  The general perspective will be covered in $\S\S$\ref{S3.0}-\ref{MS5}.

For the present limiting computation, we won't need the full formalism of Chow cohomology, but will rather content ourselves with the observations that $\xi_0$ defines a class in $Z^p(X_0,r)_{\del -{\rm closed}}$, as well as a class in 
$Z^p(Y_0,r+1)_{\del -{\rm closed}}$, the latter via this schema: 
\begin{multline}\label{sch}
\xi_0 \in  Z^p(Y_0\times Q_0\times \square^r) \mapsto Z^p(Y_0\times \PP^1\times \square^r)
\\
\mapsto Z^p(Y_0 \times \square^{r + 1}) \mapsto  Z^p(Y_0,r+1).
\end{multline}
In order to easily compute the regulator, we will also assume that $\xi$ and its pullbacks (to $\tilde{X}_0$, $\text{sing}(X_0)$) meet the real sub-cube faces properly (resp. those of $Y_0 \times \square^{r+1}$). Then (in view of \eqref{N**}) we have the limiting formula
\begin{equation} \label{eLAJ}
\mathrm{AJ}(\xi_t)(\w_t)\  {\buildrel {t\to 0}\over \mapsto} \ (2\pi\I)^{p-r-1}\big( R_r(\xi_0) + 
(2\pi\I)^r\delta_{\zeta}\big)(\eta_0\wedge \Omega_{1}),
\end{equation}
where $\zeta$ is a $(2d-2p+r+1)$-chain on $X_0=Y_0\times Q_0$ with $\partial \zeta = T_{\xi_0}$, properly meeting $\text{sing}(X_0)(\cong Y_0)$.\footnote{This is possible (even if $m=0$) since we assumed $\xi_0 \underset{\text{hom}}{\equiv} 0$, and $0=[T_{\xi_0}]\in H_{2d-2p+r}(X_0)$ $\implies$ $0=[T_{\xi_0}]\in H^{2p-r}(X_0)$ due to the specific form of $X_0$.}  The nodal point $p_0\in Q_0$ corresponds to $|\del [-\infty,0]|$ in the schema (\ref{sch}) above.  Let
$\zeta_0$ be a lift of $\zeta$ in $Y_0\times \square^1$.  Then
\begin{equation}\label{EE}
\del \big\{\zeta_0 \cap \{Y_0\times [-\infty,0]\}\big\} = \del\zeta_0
\cap\big\{Y_0\times [-\infty,0]\big\} \pm \zeta_0 \cap \big\{Y_0\times \del [-\infty,0]\big\},
\end{equation}
and $Pr_{Y_0}\big(\zeta_0 \cap \big\{Y_0\times \del [-\infty,0]\big\}\big) = 0$, since the lift arises from
the same copies of a membrane over a given  nodal singularity.  Therefore
\begin{equation}\label{sch1}
\del \big(Pr_{Y_0}\big(\big\{\zeta_0 \cap \{Y_0\times [-\infty,0]\}\big\}\big)\big) = 
\del\zeta_0
\cap\big\{Y_0\times [-\infty,0]\big\}.
\end{equation}
Again, via the schema (\ref{sch}) above, $\xi_0$ has a lift (which we still denote by $\xi_0$)
with support in $Y_0\times \square^{r+1}$. With the aid of (\ref{sch1}),
intersecting this lift with $Y_0\times [-\infty,0]^{r+1}$, followed by a projection to $Y_0$,
is precisely $\del \zeta_{Y_0}$, where
$\zeta_{Y_0} = Pr_{Y_0}\big(\big\{\zeta_0 \cap \{Y_0\times [-\infty,0]\}\big\}\big)$.

To compute the limiting $\mathrm{AJ}$ invariant, we shall utilize the relation of currents (cf. \cite[(5.2)]{KLM}) on $\square^n$
\[
dR_n = \Omega_n - (2\pi\I)^nT_n - 2\pi\I R_{\del \square^n}
\]
in the case $n = 1$, where it reads 
\begin{equation}\label{E01}
\Omega_1 = dR_{1} + (2\pi\I)T_{1}.
\end{equation}
In \eqref{eLAJ}, we first consider the term
\[
 \delta_{\zeta}(\eta_0 \wedge \Omega_{1} ),
\]
which by (\ref{E01}) decomposes into two pieces:
\begin{equation}\label{E3}
(2\pi \I) \delta_{\zeta} (\eta_0 \wedge T_1) \underset{\text{by }\eqref{sch1}}{=} (2\pi \I) \delta_{\zeta_{Y_0}}(\eta_0) ;
\end{equation}
and
\[
\delta_{\zeta} (\eta_0 \wedge d[R_1] ) = (-1)^r  \delta_{\zeta}(d[\eta_0 \wedge R_1]) ,
\]
which by Stokes's theorem\footnote{we are also using the general fact that $R_n$ vanishes along $(\mathbb{P}^1)^n\setminus \square^n = \bigcup_{j=1}^n\PP^1\times\cdots\times \{1\}\times \cdots \times \PP^1 \subset [\PP^1]^{\times n}$, which here is just the vanishing of $R_1 = \log z$ at $1$.}
\begin{equation}\label{eB}
= (-1)^r T_{\xi_0} (\eta_0 \wedge R_1) = (-1)^r  \left( (T_r \wedge R_1)(\xi_0) \right) (\eta_0)
\end{equation}
Recalling the relation $(-2\pi\I)^r T_r \wedge R_{1} + R_r\wedge \Omega_{1} = R_{r+1}$ from \cite{KLM}, the remaining part of \eqref{eLAJ}
\begin{equation}\label{E4}
( R_m(\xi_0))(\eta_0 \wedge \Omega_{1}) = \left( (R_m \wedge \Omega_1)(\xi_0)\right) (\eta_0)
\end{equation}
now combines with $(2\pi\I)^r \eqref{eB}$ to yield simply
\[
 R_{r+1}(\xi_0)(\eta_0),
\]
so that altogether \eqref{eLAJ} becomes
\[
(2\pi\I)^{p-r-1} \left\{ R_{r+1}(\xi_0)(\eta_0) + (-2\pi\I)^{r+1} \int_{\zeta_{Y_0}} \eta_0 \right\} \underset{\text{pds.}}{\equiv} \mathrm{AJ}(\xi_0)(\eta_0).
\]

Summarizing, we have

\begin{thm}\label{KT}
Given the above setting of subsection \ref{S1} of a normal function induced by 
\[
\mathrm{AJ}(\xi_t) \in \blue{J^{p,r}\big(X_t\big)},
\]
where $t\in \Delta^*$, $\xi_t\in \CH^p_{\hom}(X_t,r)_{\Q}$, and where $X_0 = Y_0\times Q_0$,
then 
\[
\lim_{t\to 0}\mathrm{AJ}(\xi_t)(\w_t) = \mathrm{AJ}(\xi_0)(\eta_0),
\]
where $\xi_0$ is interpreted as defining a class in $\CH^p(Y_0,r+1)_{\Q}$.
\end{thm}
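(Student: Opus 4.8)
The plan is to make rigorous the formal computation that precedes the statement, tracking the limit of the Abel--Jacobi functional along the canonical extension of the generalized Jacobian bundle. First I would fix the test form: by the classical description of the canonical extension of the Hodge bundle for a semistable (in fact here unipotent) degeneration (cf. \cite[p.~190]{Z}, \cite[III.B.7]{GGK}), the form $\w_0 = \tfrac{1}{2\pi\I}\eta_0\wedge\Omega_1$ on $X_0=Y_0\times Q_0$ is realized as the $t\to 0$ limit of a holomorphic family of test forms $\w_t\in F^{d-p+1}A^{2d-2p+r+1}_{d\text{-closed}}(X_t)$ representing a flat section of the dual canonically extended bundle. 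Paired against the section $t\mapsto\mathrm{AJ}(\xi_t)$ of the generalized Jacobian bundle --- which extends holomorphically across $0$ by the results invoked in $\S$\ref{Snew} (forward-referencing Theorem~5.2a and Cor.~5.3) --- this produces a well-defined number $\lim_{t\to 0}\mathrm{AJ}(\xi_t)(\w_t)$, and that limit is computed by formally substituting $\w_0$ into the KLM current formula \eqref{ER}. So the content to verify is that the right-hand side of \eqref{ER}, evaluated at $\w_0$ with the ``limiting cycle'' $\xi_0$, equals $(2\pi\I)^{p-r-1}\bigl(R_{r+1}(\xi_0)(\eta_0)+(-2\pi\I)^{r+1}\int_{\zeta_{Y_0}}\eta_0\bigr)$.

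The bulk of the argument is then the current manipulation already laid out: (i) choose $\xi$ meeting $X_0\times\square^r$, $\mathrm{sing}(X_0)\times\square^r$ and all subfaces properly (and meeting the real sub-cube faces properly), so that the naive specialization $\xi_0$ simultaneously represents a $\del$-closed class in $Z^p(X_0,r)$ and, via the schema \eqref{sch}, a $\del$-closed class in $Z^p(Y_0,r+1)$; (ii) because $\xi_0$ factors through Chow cohomology $\chc^p(X_0,r)$, the term $T_{\xi_0}$ lives in $F^rH^{2r-m}(X_0,\Q)=\{0\}$, so the ``$T$-part'' of the AJ functional contributes only through a membrane $\zeta$ with $\del\zeta=T_{\xi_0}$, chosen to meet $\mathrm{sing}(X_0)\cong Y_0$ properly, and with lift $\zeta_0$ to $Y_0\times\square^1$; (iii) apply the $n=1$ current identity \eqref{E01}, $\Omega_1=dR_1+(2\pi\I)T_1$, to split $\delta_\zeta(\eta_0\wedge\Omega_1)$ into a piece equal (via \eqref{sch1}) to $(2\pi\I)\delta_{\zeta_{Y_0}}(\eta_0)$ and a piece which, after Stokes and the vanishing of $R_1=\log z$ at $z=1$, becomes $(-1)^r(T_r\wedge R_1)(\xi_0)(\eta_0)$; (iv) combine with the $R_m(\xi_t)(\w_t)$ term using the KLM product relation $(-2\pi\I)^rT_r\wedge R_1+R_r\wedge\Omega_1=R_{r+1}$ to collapse everything to $R_{r+1}(\xi_0)(\eta_0)$; (v) reassemble the prefactors to get exactly the displayed formula, i.e.\ $\mathrm{AJ}(\xi_0)(\eta_0)$ where $\xi_0$ is read as a class in $\CH^p(Y_0,r+1)_{\Q}$.

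The key bookkeeping lemma inside step (iii) is the identity \eqref{sch1}: the lift $\zeta_0$ of the membrane is built from ``the same copies of a membrane over a given nodal singularity'', so that $\mathrm{Pr}_{Y_0}(\zeta_0\cap\{Y_0\times\del[-\infty,0]\})=0$; combined with the Leibniz rule \eqref{EE} for $\del$ acting on $\zeta_0\cap\{Y_0\times[-\infty,0]\}$, this gives $\del\zeta_{Y_0}=\del\zeta_0\cap\{Y_0\times[-\infty,0]\}$, which matches the boundary of the lifted $\xi_0$ intersected with $Y_0\times[-\infty,0]^{r+1}$ and pushed to $Y_0$. This is exactly what licenses treating $\zeta_{Y_0}$ as a legitimate membrane for $T_{\xi_0}$ viewed on $Y_0$ with shift $r\rightsquigarrow r+1$, i.e.\ the ``going up'' of the membrane. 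I would also note (as in the footnote) that homological triviality $\xi_0\equiv_{\mathrm{hom}}0$ together with the product structure $X_0=Y_0\times Q_0$ forces $[T_{\xi_0}]=0$ in $H^{2p-r}(X_0)$, so such a $\zeta$ exists even when $m=0$.

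The main obstacle is the interchange of limit and integration in step (i)-of-the-first-paragraph: justifying that $\lim_{t\to 0}\bigl(R_m(\xi_t)+(2\pi\I)^m\int_{\del^{-1}(T_m(\xi_t))}\bigr)(\w_t)$ equals the same expression formed from $\xi_0$ and $\w_0$. This requires that the currents $R_{\xi_t}$ and the membranes $\zeta_t$ degenerate in a controlled way as $t\to0$ --- concretely, choosing $\xi$ to meet the central fiber and all the relevant real faces properly guarantees that the integrals defining these currents are given by families of integrals over chains with uniformly bounded geometry near $t=0$, so one can pass to the limit; the $\log z_i$ singularities in $R_n$ are locally $L^1$ and hence harmless. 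The cleanest way to carry this out rigorously is to invoke the general ``limit of normal functions'' machinery of $\S$\ref{MS5} (which is precisely designed to handle this), but since the present section is meant to be self-contained and motivational, I would instead give the direct argument: everything in sight is a finite sum of integrals of smooth forms with at worst logarithmic singularities over semialgebraic chains varying continuously in $t$, and properness of $\xi$ along $X_0$ (and $\mathrm{sing}(X_0)$) ensures no mass escapes to the singular locus in the limit. The remaining steps (ii)--(v) are then the routine current algebra transcribed above.
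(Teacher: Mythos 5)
Your proposal is correct and follows essentially the same route as the paper: Theorem \ref{KT} is stated there as a summary of precisely the computation you transcribe --- the test form $\w_0=\tfrac{1}{2\pi\I}\eta_0\wedge\Omega_1$ as a canonical-extension limit of the $\w_t$, the schema \eqref{sch} reading $\xi_0$ in $Z^p(Y_0,r+1)$, the membrane identities \eqref{EE}--\eqref{sch1}, the splitting via $\Omega_1=dR_1+(2\pi\I)T_1$ and Stokes, and the KLM relation $(-2\pi\I)^rT_r\wedge R_1+R_r\wedge\Omega_1=R_{r+1}$. Your added remarks on justifying the limit-integral interchange match the paper's own treatment, which defers the precise interpretation of $\lim_{t\to0}\mathrm{AJ}(\xi_t)$ to the machinery of $\S$\ref{MS5}.
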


\begin{rmk} \label{RR} {(i)} The situation $X_0 = Y_0 \times Q_0$ can be replaced by
$Y_0\times Q_0^{\ell}$ ($Y_0$ smooth) for $\ell\geq 1$, and a parallel analysis expresses the limiting regulator as the regulator of a class in $\mathrm{CH}^p(Y_0,r+\ell)_{\Q}$.  But there is a caveat in order
here:  the total space $\ol{X}$  over $\Delta$ cannot  be both smooth and semistable if $\ell>1$.
It all boils down to the situation $V(x_1y_1-t,...,x_Ny_N-t) \subset \CC^{2N} \times\Delta$,
a variety  which is singular at $(0,...,0)$ if $N>1$. This can be remedied in a number of ways: by blowing up (along the lines of $\S$\ref{S3.2}), allowing non-semistable degenerations (cf. $\S$\ref{sec1.3}), or by passing to several variables (viz., 
$V(x_1y_1-t_1,...,x_Ny_N-t_N) \subset \CC^{2N} \times\Delta^N$; not pursued here).

{(ii)} Many natural moduli spaces do not contain singular fibers of the form $X_0 = Y_0 \times Q_0$.
For instance, let $Z\subset \PP^5$ be a very general hypersurface of high degree. Then $Z$ does not contain any rational curves, and hence neither does any hyperplane section $X_0$ of $Z$. Furthermore, there are Hodge-theoretic obstructions to having such a degeneration. This is another reason to develop the more general perspectives in $\S\S$\ref{S3.0} and \ref{MS5}.
\end{rmk}

\subsection{A toy model} \label{toy}

Let $\pi: X\to \PP^{1}$ be the elliptic surface defined by
$$
y^{2} = x^{3} + x^{2} + t =: h(x),
$$
and let $\Sigma =\{0,\infty,\tfrac{4}{27}\}\subset \PP^{1}$ denote
the singular set of $\pi$. (Note that $X_0$ and $X_{\frac{-4}{27}}$ are nodal curves, while $X_{\infty}$ is a simply-connected tree of $\PP^1$'s. We wish to verify, as a first application of Theorem \ref{KT}, that $\mathrm{CH}^2(X_t,2)_{\Q} \neq \{0\}$ for very general $t\in \mathbb{P}^1$. Of course, this is a known fact in view of

\begin{thm} \cite{lew2,As} Let $U = X\bs \big\{X_{0},X_{\frac{-4\ }{27}},X_{\infty}
\big\}$. Then 
$$
\Gamma\big(H^{2}(U,\Q(2))\big) \simeq \Q^{2};
$$
moreover it is generated by $[\Omega_{\xi '}],[\Omega_{\xi ''}]$,
where
\[
\xi ' = \biggl\{\frac{(y-x)^{3}}{8},\frac{(y+x)^{3}}{8}\biggr\}
\biggl\{\frac{y+x}{y-x},t\biggr\}^{3},\
\]
\[
\xi '' = \biggl\{\frac{({\I}y +x+\tfrac{2}{3})^{3}}{8},
\frac{({\I}y-x-\tfrac{2}{3})^{3}}{8}\biggr\}
\biggl\{\frac{{\I}y-x-\tfrac{2}{3}}{{\I}y+x+\tfrac{2}{3}},-t-\tfrac{4}{27}\biggr\}^{3},
\]
are classes in $\CH^{2}(U,2;\Q)$.

\end{thm}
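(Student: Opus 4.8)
The plan is to compute $\Gamma\bigl(H^{2}(U,\Q(2))\bigr)=\mathrm{hom}_{\MHS}\bigl(\Q(0),H^{2}(U,\Q(2))\bigr)$ by Hodge theory, obtaining an a priori bound $\dim_{\Q}\le 2$, and then to detect the two symbols inside it by iterated residues at the two nodes. Take $X$ smooth projective (the relatively minimal model) and let $D:=X_{0}+X_{-4/27}+X_{\infty}\subset X$ be the reduced sum of the three singular fibres; locally analytically $D$ looks like $\{uv=0\}$ at each node, and its dual graph is a one-vertex self-loop for each of the two nodal cubics $X_{0},\,X_{-4/27}$, together with a tree for $X_{\infty}$ (type $II^{*}$).

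First I would establish the upper bound. Since $U$ is the total space of a smooth family of curves over the affine curve $\PP^{1}\setminus\Sigma$ and every component of $D$ is rational, $H^{2}(U)$ carries only weights $2$ and $4$: a weight-$3$ piece would involve $H^{1}$ of a component of $D$, which vanishes. Hence $H^{2}(U,\Q(2))$ has weights $-2$ and $0$, so every morphism of MHS $\Q(0)\to H^{2}(U,\Q(2))$ factors through $\mathrm{gr}^{W}_{0}$, giving
\[
\Gamma\bigl(H^{2}(U,\Q(2))\bigr)\ \hookrightarrow\ \mathrm{gr}^{W}_{0}H^{2}(U,\Q(2))\ =\ \mathrm{gr}^{W}_{4}H^{2}(U)\otimes\Q(2).
\]
By the weight spectral sequence of $U=X\setminus D$, $\mathrm{gr}^{W}_{4}H^{2}(U)$ is the kernel of the Gysin (signed incidence) map $H^{0}(\tilde D^{(2)})(-2)\to H^{2}(\tilde D^{(1)})(-1)$, i.e. the $(-2)$-Tate twist of $H_{1}$ of the dual graph of $D$. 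A self-node at a single rational component lies in that kernel, whereas the double points of the tree $X_{\infty}$ do not; so $\mathrm{gr}^{W}_{4}H^{2}(U)\cong\Q(-2)^{\oplus 2}$, with one summand for each node $p_{0}\in X_{0}$ and $p_{-4/27}\in X_{-4/27}$, and therefore $\Gamma(H^{2}(U,\Q(2)))\subseteq\Q^{2}$.

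Next I would check that $\xi'$ and $\xi''$ are genuine cycles and identify their realizations. A direct computation of tame symbols --- using multiplicativity of $\{\cdot,\cdot\}$, Weil reciprocity along the fibres of $\pi$, and transversality of the supporting curves $\{y=\pm x\}$, $\{t=0\}$, $\{t+\tfrac{4}{27}=0\}$ to the relevant loci --- shows $\del\xi'=\del\xi''=0$, so $\xi',\xi''\in\CH^{2}(U,2)_{\Q}\cong H^{2}_{\M}(U,\Q(2))$; the cubes and the constant $\tfrac18=(\tfrac12)^{3}$ are present exactly to clear the residual $2$-torsion and sign terms in those tame symbols. Via the regulator of \cite{KLM} they produce classes $c',c''\in\Gamma(H^{2}(U,\Q(2)))$ whose de Rham avatars in $F^{2}H^{2}(U,\C)$ are represented by
\[
\Omega_{\xi'}=9\,d\log(y-x)\wedge d\log(y+x)+3\,d\log\tfrac{y+x}{y-x}\wedge d\log t
\]
and the analogous $\Omega_{\xi''}$, with $y\mp x$ replaced by $\I y\mp x\mp\tfrac23$ and $t$ by $-t-\tfrac{4}{27}$.

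Finally I would establish independence. Under the inclusion of the first step, $c'$ and $c''$ are recovered from their iterated residues at $p_{0}$ and $p_{-4/27}$. Using local analytic coordinates $(u,v)$ with $uv=t$ near $p_{0}=(0,0,0)$ (so $X_{0}=\{uv=0\}$ locally), a direct evaluation of $\tfrac{1}{(2\pi\I)^{2}}\!\int$ over the linking torus gives $\mathrm{Res}^{(2)}_{p_{0}}\Omega_{\xi'}=3(2\pi\I)^{2}\ne 0$; and $\Omega_{\xi'}$ is regular in a neighbourhood of $p_{-4/27}$, so $\mathrm{Res}^{(2)}_{p_{-4/27}}\Omega_{\xi'}=0$. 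The symmetric computation gives $\mathrm{Res}^{(2)}_{p_{-4/27}}\Omega_{\xi''}\ne 0$ and $\mathrm{Res}^{(2)}_{p_{0}}\Omega_{\xi''}=0$, so the images of $c'$ and $c''$ in $\Q^{2}$ are linearly independent; with the bound above this forces $\Gamma(H^{2}(U,\Q(2)))\cong\Q^{2}$ with basis $\{[\Omega_{\xi'}],[\Omega_{\xi''}]\}$. The conceptual core --- and the main obstacle --- is the weight computation of the first step; once $\mathrm{gr}^{W}_{4}H^{2}(U)$ is pinned down to have rank equal to the number of nodes of $D$ (here $2$, with $X_{\infty}$ contributing nothing), the count $\dim_{\Q}\Gamma=2$ is automatic, and the rest is bookkeeping (vanishing of the tame symbols, and checking the surviving residue constants are nonzero against the chosen normalizations). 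Alternatively one may obtain the two generators from the going-up Theorem \ref{KT} with $Y_{0}=\Spec\C$, applied to the degenerations $t\to 0$ and $t\to-\tfrac{4}{27}$, which identifies the two limiting regulators with explicit nontorsion classes in $\CH^{2}(\Spec\C,3)_{\Q}$.
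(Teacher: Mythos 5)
The paper itself gives no proof of this statement --- it is imported verbatim from \cite{lew2} and \cite{As} --- so there is no internal argument to compare against; judged on its own terms, your reconstruction is the standard route and it checks out. The weight bound is right: morphisms $\Q(0)\to H^2(U,\Q(2))$ inject into $\mathrm{Gr}^W_0$, and since the two $I_1$ fibres are irreducible nodal rational curves (dual complex a circle) while $X_\infty$ is the $II^*$ tree, the kernel of the incidence map $H^0(\tilde D^{(2)})(-2)\to H^2(\tilde D^{(1)})(-1)$ is indeed $\Q(-2)^{\oplus 2}$, one copy per node. Your residue bookkeeping is also correct: along $\{y=x\}$, $\{y=-x\}$ and the section at infinity the tame symbols of $\xi'$ come out to $\pm1$ (which is exactly why the statement is with $\Q$-coefficients and why the cubes and the $\tfrac18$ appear), and in local coordinates $u,v$ with $uv=t$ at the node of $X_0$ the linking-torus integral of $\Omega_{\xi'}$ is $\pm3(2\pi\I)^2$, while $\Omega_{\xi'}$ is a form in units near the node of $X_{-4/27}$, so that residue vanishes; the involution $(x,y,t)\mapsto(-x-\tfrac23,\I y,-t-\tfrac{4}{27})$ (noted in the paper's subsequent Remark) swaps the two nodal fibres and carries $\xi'$ to $\xi''$, giving the mirror statement for free. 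Two points you should make explicit rather than wave at: first, ``Weil reciprocity along the fibres'' is not the mechanism for $\partial\xi'=0$ --- it is a divisor-by-divisor tame-symbol computation on the surface (along $\{y=\pm x\}$ and the infinity section), and it yields $-1$ rather than $1$ in two of the three cases; second, since \emph{both} curves $\{y=\pm x\}$ pass through the node of $X_0$, the torus integrals of the individual summands of $\Omega_{\xi'}$ depend on how the torus threads between those branches, and only the full combination is well defined --- this is saved precisely because the residues of $\Omega_{\xi'}$ along its polar divisors are $d\log$ of the (constant) tame symbols, hence zero, and because $W_{-4}H_2(U)$ pairs to zero with $W_3H^2(U)$, so the torus pairing computes exactly the $\mathrm{Gr}^W_0$ projection. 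With those two points spelled out, your argument is a complete proof of the quoted result, and your closing observation that the generators can alternatively be detected by the going-up limit of Theorem \ref{KT} at $t=0$ and $t=-\tfrac{4}{27}$ is consistent with how the paper actually uses the theorem in $\S$\ref{toy}.
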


Indeed, given any class $\xi\in \CH^2(U,2)$ such that
$[\Omega_{\xi}]$ is nonzero in $\Gamma (H^{2}(U,\Q(2)) )$, standard arguments (injectivity of the topological invariant)
imply that $\mathrm{AJ}(\xi_t)$ (hence $\ch^2(X_t,2)$) is nontorsion for very general $t$.

For the approach based on limits, take a small disk $\Delta$  centered at $t=0$. For $t\in \Delta^*$, $\xi_t '' $ belongs to $\CH^2(X_t,2)_{\Q}$, and for $t=0$, we shall interpret $\xi_0 ''$ as an element of $\ch^2(\Spec(\C),3)_{\Q}$. We attend to several details.
First, $X_0 = V(y^{2} = x^{3} + x^{2})$ is a nodal rational curve parameterized by $\mathbb{P}^1/\{0,\infty\}$ via
$$z\mapsto \left(\frac{4z}{(z-1)^2},\frac{4z(z+1)}{(z-1)^3}\right) = \left( x(z),y(z)\right) .$$
The restriction of $\xi''$ to $X_0$ may be written
$$\xi_0 '' = 9\left( z,\frac{3}{2^{\frac{5}{3}}}\left( -\I y(z)+x(z)+\tfrac{2}{3}\right),\frac{-\I y(z)+x(z)+\frac{2}{3}}{\I y(z) + x(z) +\frac{2}{3}}\right) ,$$
as a cycle in $\square^3$, and we set
$$ w(z):=\frac{3}{2^{\frac{5}{3}}}\left(-\I y(z) + x(z) + \tfrac{2}{3}\right) .$$
Write $\gamma$ for the closed path $T_z = [-\infty,0]$ on $X_0$; and note that, on $\gamma$, $w(z)$ winds once clockwise about $0$.  Moreover one easily sees that
\begin{equation} \label{E:bound}
2^{-\frac{5}{3}} \leq \left. |w| \right|_{\gamma} \leq 2^{-\frac{2}{3}}
\end{equation}
and
$$ \left. \frac{-\I y + x + \frac{2}{3}}{\I y + x + \frac{2}{3}}\right|_{\gamma} = \left. \frac{w}{\bar{w}}\right|_{\gamma} .$$
So along $\gamma$, $\xi_0 ''$ looks like $(z,w,\tfrac{w}{\bar{w}})$, and $\log(\tfrac{w}{\bar{w}})$ is zero at $\gamma\cap T_w = \{ w=-\tfrac{1}{3}\}$.

For the regulator, then,
\begin{flalign*}
R& := \mathrm{AJ}(\xi_0 '')(1) = \tfrac{1}{2\pi \I} \int_{\xi_0 ''} R_3 \\
&= 9 \int_{\gamma} \log(w)\mathrm{dlog}(\tfrac{w}{\bar{w}}) \\
&= 18\I \int_{\gamma} \log (w) \mathrm{darg}(w) \\
\implies \mathrm{Im}(R) &= 18\int_{\gamma} \log|w| \mathrm{darg}(w).
\end{flalign*}
Using the bounds \eqref{E:bound} and reversing the path (for a positive measure), we conclude that
\begin{equation} \label{E:bound2}
36\pi \cdot \tfrac{2}{3}\log(2) \leq \mathrm{Im}(R)\leq 36\pi \cdot \tfrac{5}{3} \log(2).
\end{equation}
Consequently we have

\begin{thm}
\[ 
\mathrm{AJ}(\xi_0)\ne 0 \in H_{\D}^1(\Spec(\C),\Q(2)) \simeq \frac{\C}{\Q\cdot \pi^2}.
\]
\end{thm}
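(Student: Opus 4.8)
The plan is to deduce the final theorem directly from the estimate \eqref{E:bound2} together with the identification of the target group. First I would recall that, by the computation carried out in $\S$\ref{toy}, the class $\xi_0''$ defines an element of $\CH^2(\Spec(\C),3)_{\Q} \cong H_{\mathcal{M}}^3(\Spec(\C),\Q(2))$ (via the schema \eqref{sch} applied with $Y_0 = \Spec(\C)$, $r=2$, $\ell = 1$), and that its regulator $\mathrm{AJ}(\xi_0'')(1) = R$ lives in $H_{\D}^1(\Spec(\C),\Q(2))$. The standard identification $H_{\D}^1(\Spec(\C),\Q(2)) \cong \C/\Q(2) \cong \C/(2\pi\I)^2\Q = \C/\Q\cdot \pi^2$ (up to the usual normalization constants; concretely the regulator of $\{a,b\}$ is represented by $\tfrac{1}{2\pi\I}\int R_2$ modulo the lattice $(2\pi\I)^2\Q$) pins down the target.

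The key point is then purely a matter of nonvanishing modulo the lattice. From \eqref{E:bound2} we have
\[
36\pi \cdot \tfrac{2}{3}\log 2 \ \leq\ \mathrm{Im}(R)\ \leq\ 36\pi\cdot \tfrac{5}{3}\log 2,
\]
so $\mathrm{Im}(R) > 0$; in particular $R \neq 0$ in $\C$. To conclude $R \neq 0$ in $\C/\Q\cdot\pi^2$, I would observe that the lattice $\Q\cdot\pi^2 \subset \C$ is \emph{real}, so that passing to $\C/\Q\cdot\pi^2$ the imaginary part $\mathrm{Im}(R)$ is well-defined (independent of the coset representative). Since $\mathrm{Im}(R) \geq 24\pi\log 2 > 0$, the class $[R]$ is nonzero in $\C/\Q\cdot\pi^2$. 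Hence $\mathrm{AJ}(\xi_0) = [R] \neq 0 \in H_{\D}^1(\Spec(\C),\Q(2))$, which is the assertion.

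The only genuine content beyond bookkeeping is the interchange of limit and regulator — i.e. that $\mathrm{AJ}(\xi_0)$ as computed here really is $\lim_{t\to 0}\mathrm{AJ}(\xi_t'')(\w_t)$ — but this is exactly Theorem \ref{KT} (with $Y_0 = \Spec(\C)$, so $\xi_0''$ is viewed in $\CH^2(\Spec(\C),3)_{\Q}$ and $\eta_0 = 1$), so I would simply cite it. The one step where I would be slightly careful is the reduction mod periods: I want to be sure the relevant period lattice really is $\Q\cdot\pi^2$ (real) and not something with a nonzero imaginary part that could kill $\mathrm{Im}(R)$ — but for $H_{\D}^1(\Spec(\C),\Q(2))$ the lattice is $(2\pi\I)^2\Q = -4\pi^2\Q = \Q\cdot\pi^2$, which is indeed real, so the argument goes through. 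I would flag this lattice computation as the main (and essentially only) obstacle, and it is a standard one.
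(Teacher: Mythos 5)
Your argument is correct and is essentially the paper's own: the theorem is deduced directly from the estimate \eqref{E:bound2}, since $\mathrm{Im}(R)\geq 24\pi\log 2>0$ while the indeterminacy $\Q(2)=(2\pi\I)^2\Q=\Q\cdot\pi^2$ is real, so the imaginary part survives passage to $\C/\Q\cdot\pi^2$; your citation of Theorem \ref{KT} is harmless but not needed for the nonvanishing statement itself (it is what connects this value to the limit of the family, used afterwards to conclude $\CH^2(X_t,2)_{\Q}\neq\{0\}$ for very general $t$).
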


\begin{rem}
From a different point of view, limiting calculations were performed in \cite[$\S$6.3]{DoranKerr} for several families of elliptic curves. The case related to the present calculation is the ``E8'' curve family
$$ E_{\st} : \;\; \x\y = \st \left( 1+\x^2 + \y^3 \right) , $$
which is birational to a base change of the Tate curve via
$$\Theta:\; (\x,\y,\st)\mapsto \left( -(2\st)^2 \y , (2\st)^3 \x - (2\st)^2 \y, -(2\st)^6 \right) = (x,y,t).$$
The symbol studied in [op. cit.] is $\{\x,\y\}=\tfrac{1}{18}\Theta^* \xi'$; and there is a birational automorphism $\alpha: (x,y,t)\mapsto \left(-x-\tfrac{2}{3},\I y,-t-\tfrac{4}{27}\right)$ of the Tate curve with $\alpha^* \xi'=\xi''$. Overall, $\alpha^{-1}\circ \Theta$ sends the fiber $E_{4^{-\frac{1}{3}}3^{-\frac{1}{2}}}=:E_{\st_0}$ isomorphically to $X_0$, and pulls $\xi''$ back to $\{\x,\y\}^{18}$. Modulo a \emph{conjectural} relation in the Bloch group, it was shown in [op. cit.] that $\tfrac{1}{2\pi\I} \mathrm{AJ} \left( \{\x,\y\}_{E_{\st_0}} \right) = \tfrac{10}{3\pi} G$, where 
\begin{equation} \label{eqG}
G:=\sum_{n\geq 0}(-1)^n (2n+1)^{-2} = L(\chi_4,2)
\end{equation}
is Catalan's constant. So this would give that $\mathrm{Im}(R) = 120 \cdot G$, which agrees with \eqref{E:bound2} above.
\end{rem}

\subsection{Speculation}
As another application of the semi-nodal instance of the going-up principle, we briefly address
a relationship between the Griffiths group of
a threefold and the group of indecomposables on a given surface. 

Begin with a diagram
 \[
 \begin{matrix} \mathscr{X}&\hookrightarrow&\ol{\mathscr{X}}\\
 f\big\downarrow\quad &&\quad \big\downarrow\ol{f}\\
 B&\hookrightarrow&\ol{B}
 \end{matrix}
 \]
 where $\ol{\mathscr{X}}$ is a smooth projective fourfold,  $\ol{B}$ is a smooth
 projective curve and $f$ is smooth and proper. Put $X_t := f^{-1}(t)$,
 a smooth {threefold}. A cycle $\xi\in \CH^2(\overline{\mathscr{X}})$ which is relatively homologous 
 \blue{to zero} determines a normal function
 \[
 \nu_{\xi} : B \to \coprod_{t\in B(\C)}\blue{J^{2,0}\big(X_t\big)},
\]
with topological invariant $[\nu_{\xi}] \in {\rm Hom}_{{\rm MHS}}(\Q, \big(H^1(B,R^3f_*\Q(2))\big)$.
When this is nonzero, then under suitable monodromy conditions, $\mathrm{Griff}^2(X_t)_{\Q}\neq \{0\}$ 
for very general $t\in B(\C)$.

Now consider the situation where for some $0\in \ol{B}\bs B$,
$X_0 = Y_0\times Q_0$. Viewing $\xi_0$ as a class in $\mathrm{CH}^2(Y_0,1)_{\Q}$, we may ask whether it is \emph{indecomposable}, i.e. nonzero in $\mathrm{CH}^2_{\text{ind}}(Y_0,1)_{\Q}/(\mathrm{CH}^1(Y_0)\otimes\C^* )$. A stronger condition is \emph{regulator indecomposability}, which is to say that $\mathrm{AJ}(\xi_0)$ is nonzero in $J^{2,1}(Y_0)/\left(\mathrm{NS}(Y_0)\otimes \C^*\right)$.

The point is that the limiting Abel-Jacobi calculation (Theorem \ref{KT}) gives a connection between these conditions on $\nu_{\xi}$ and $\xi_0$. First note that for very general $t\in \Delta^*$,
 $N^1H^3(X_t,\Q(2))$ has constant rank. One has a map
 \[
 \mathrm{Griff}^2(X_t) \to J\biggl(\frac{H^3(X_t,\Q(2))}{N^1H^3(X_t,\Q(2))}\biggr).
 \]
There are natural isomorphisms
\[
N^1H^3(X_t) \simeq N^1H^3(X_t)^{\vee}, \quad  [N^1H^3(X_t)]^{\perp} \simeq \big([N^1H^3(X_t)]^{\perp}\big)^{\vee},
\]
and so
\[
J\big([N^1H^3(X_t,\Q(2))]^{\perp}\big) \simeq \frac{\big([N^1F^2H^3(X_t,\C)]^{\perp}\big)^{\vee}}{[N^1 H_3(X_t,\Q(2))]^{\perp}}.
\]
At $t=0$, a similar calculation holds, leading to a specialized analogue of Theorem \ref{KT}, where
the limiting calculation is of the form
\[
\mathrm{AJ}(\xi_t) \in J\big([N^1H^3(X_t,\Q(2))]^{\perp}\big) \mapsto \mathrm{AJ}(\xi_0)\in J\big(H^2_{\rm tr}(Y_0,\Q(2))\big).
\]
A well-known conjecture (see \cite{dJ-L}) states that 
\[
\mathrm{AJ} : \CH_{\ind}^2(Y_0;1;\Q) \to J\big(H^2_{\rm tr}(Y_0,\Q(2))\big),
\]
is injective. Assuming this, we have a diagram
\begin{equation}\label{EL}
\small \xymatrix{
\{\xi_t\}\in {\rm Griff}^2(X_t;\Q) \ar @{-->} [d]_{\text{(?)}} \ar [r]^{\mathrm{AJ}(\xi_t)\mspace{110mu}} & J\big([N^1H^3(X_t,\Q(2))]^{\perp}\big) \simeq J\biggl(\frac{H^3(X_t,\Q(2))}{N^1H^3(X_t,\Q(2))}\biggr) \ar @{~>} [d]^{\lim_{t\to 0}}
\\
\{\xi_0\}\in \CH^2_{\ind}(Y_0,1;\Q) \ar @{^(->} [r]^{\mathrm{AJ}(\xi_0)\mspace{80mu}} &J\big(H^2_{\rm tr}(Y_0,\Q(2))\big)
\simeq J\biggl(\frac{H^2(Y_0,\Q(2))}{N^1H^2(Y_0,\Q(2))}\biggr)
} \normalsize
\end{equation}
where the limiting map (?) is defined by making the diagram commutative. In particular, then, we expect that indecomposability of $\xi_0$ implies nontriviality of $\xi_t$ in the Griffiths group.  This line of inquiry, as well
as various generalizations,\footnote{both to higher degrees of $K$-theory and to higher $\mathrm{AJ}$ maps and the Bloch-Beilinson filtration \cite{lew}. Note that we do not see a way to define the dotted arrow without assuming injectivity of the bottom Abel-Jacobi map.} will be pursued in a later work.

On the other hand, there is nothing at all conjectural about \emph{regulator} indecomposability of $\xi_0$ implying nontriviality of $\xi_t$ in the Griffiths group (for $t$ general).  This will be spelled out in the worked example of $\S$4 (see Theorem \ref{thm3.6}), for which we shall need the slightly more general language of the next section.

\section{\bf Motivic picture: Specialization and going-up}\label{S3.0}
In this section, we recall the construction of specialization maps in the settings of higher Chow groups and motivic cohomology, and prove some elementary properties. These results are then applied to articulate a more general perspective on ``going up'' in $K$-theory.

\subsection{Specialization for Higher Chow groups}

In the following, $ f: X \rightarrow B$ will denote a flat morphism of regular noetherian (equi-dimensional) schemes where $B= Spec(R)$ is the spectrum of a discrete valuation ring. In this setting, 
Levine (\cite{Levine}) has defined a theory of higher Chow groups $\CH_{d+r-p}(X,r)\cong \CH^{p}(X,r)$ ($d=$ relative dimension of $f$). The $\CH_q(X,r)$ are defined as the homology groups of a certain complex $\blue{Z_q(X, \bullet)}$. These satisfy the following properties: 
\begin{enumerate}
\item If $X$ and $B$ are essentially of finite type over a field $k$, then these are the usual higher Chow groups defined by Bloch.
\item If $Z\subset X$ is a closed (pure codimension) subscheme (of finite type over $B$) of codimension $c$, then there is a long exact localization sequence
$$ \rightarrow {\rm CH}^{p-c}(Z,r) \rightarrow {\rm CH}^{p}(X,r) \rightarrow {\rm CH}^{p}(X \setminus Z, r) \xrightarrow{\partial} \blue{{\rm CH}^{p-c}(Z,r-1)} \rightarrow .$$ 
\end{enumerate}

\begin{rmk}
In our applications, we work in the setting of a degenerating family over a one-dimensional base $B$ of equi-characteristic zero.
\end{rmk}

\noindent Let $\pi$ be a fixed uniformizer in $R$, $s$ denote the closed point of $B$, and $\eta$ denote the generic point. Furthermore, let $X_{s}$ (resp. $X_{\eta}$) denote the corresponding special (resp. generic) fiber; note that by virtue of regularity of $X$, $X_{\eta}$ is smooth. Let $f_{s}$ (resp. $f_{\eta}$) denote the restriction of $f$ to the special fiber (resp. generic fiber). Finally, let $i: X_{s} \hookrightarrow X$ and $j: X_{\eta} \hookrightarrow X$ denote the natural inclusions. Then $\psi : = f_{\eta}^{*}(\pi) \in \CH^{1}(X_{\eta}, 1)$ and one can define a specialization map
\begin{equation}\label{ESP}
Sp_{\pi}: \CH^{p}(X_{\eta}, r) \rightarrow \CH^{p}(X_{s}, r).
\end{equation}
by setting $Sp_{\pi}(y) := \partial(\psi \cdot y)$, where $\partial: \CH^{p+1}(X_{\eta}, r+1) \rightarrow \CH^{p}(X_{s}, r)$ is the boundary map coming from the localization sequence. Note that pullback morphisms induce a $\CH^{*}(X, *)$-module structure on both $\CH^{*}(X_{\eta}, *)$ and $\CH^{*}(X_{s}, *)$. Moreover, since the localization sequence respects the module structure, the boundary map $\partial$ is a morphism of $\CH^{*}(X,*)$-modules. It follows that $Sp_{\pi}$ is also compatible with this module structure.

\begin{rmk}
(1) If $n=0$, these specialization maps are already considered in Fulton (\cite{Fulton}). In this case, the morphisms are independent of the choice of uniformizer, and preserve ring structures. In particular, $Sp_{\pi}: \CH^{*}(X_{\eta}) \rightarrow \CH^{*}(X_{s})$ is a ring homomorphism. \\
(2) If $X = B$, then the specialization morphisms above were considered by Bloch \cite[$\S$5.2]{Bloch4}. It is shown there that, under the additional assumption that $B$ contains its residue field, the specialization map is an algebra map.\end{rmk}

It is likely that the construction of the specialization map and the following properties are known to the experts. However, we give the details here due to the lack of a reference.

\begin{prop}\label{Sp} 

\begin{itemize}
\item[(1)] With notation as above, the following diagram commutes: 
$$
\xymatrix{
\CH^{p}(X,r) \ar[d]^{i^{*}} \ar[r]^{j^{*}}   & \CH^{p}(X_{\eta}, r) \ar[dl]^{Sp_{\pi}} \\
 \CH^{p}(X_{s}, r)  & . }
$$
\item[(2)] Let $g: X \rightarrow X'$ denote a proper morphism of regular schemes smooth over $B$. Then the following diagram commutes:
$$
\xymatrix{
\CH_{q}(X_{\eta}, r) \ar[r]^{Sp_{\pi}} \ar[d]^{g_{\eta*}} & \CH_{q}(X_{s}, r) \ar[d]^{g_{s*}} \\
\CH_{q}(X'_{\eta}, r) \ar[r]^{Sp_{\pi}}  & \CH_{q}(X'_{s}, r).  }
$$
\item[(3)] Let $g: X \rightarrow X'$ denote a flat morphism of regular schemes smooth over $B$ which is equi-dimensional of relative dimension $d$. Then the following diagram commutes:
$$
\xymatrix{
\CH_{q}(X'_{\eta}, r) \ar[r]^{Sp_{\pi}} \ar[d]^{g_{\eta}^{*}} & \CH_{q}(X'_{s}, r) \ar[d]^{g_{s}^{*}} \\
\CH_{q+d}(X_{\eta}, r) \ar[r]^{Sp_{\pi}}  & \CH_{q+d}(X_{s}, r).  }
$$
\item[(4)] Let $i: Z \subset X$ denote a regular (codimension $c$) immersion \blue{with smooth generic fiber} over $B$. Then the following diagram commutes:
$$
\xymatrix{
\CH_{q}(X_{\eta}, r) \ar[r]^{Sp_{\pi}} \ar[d]^{i_{\eta}^{*}} & \CH_{q}(X_{s}, r) \ar[d]^{i_{s}^{*}} \\
\CH_{q-c}(Z_{\eta}, r) \ar[r]^{Sp_{\pi}}  & \CH_{q-c}(Z_{s}, r).  }
$$

\item[(5)] Let $\zeta \in \CH^{p}(X_{\eta}, 1)$. If $\zeta$ is decomposable, then $Sp_{\pi}(\zeta)$ is decomposable.  
\end{itemize}
\end{prop}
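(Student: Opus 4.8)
The plan is to reduce to a single decomposable generator and then invoke the fact, recorded just above, that $Sp_{\pi}$ is compatible with the $\CH^{*}(X,*)$-module structure. By definition the decomposable subgroup of $\CH^{p}(X_{\eta},1)$ is the image of the intersection product
\[
\CH^{p-1}(X_{\eta})\otimes\CH^{1}(X_{\eta},1)\longrightarrow\CH^{p}(X_{\eta},1),
\]
and likewise for $X_{s}$. Since $Sp_{\pi}$ and both product maps are additive, it suffices to show that $Sp_{\pi}(W\cdot\{g\})$ is decomposable for a single $W\in\CH^{p-1}(X_{\eta})$ and $\{g\}\in\CH^{1}(X_{\eta},1)$.

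First I would lift $W$ to $X$. Applying property (2) to the codimension-one closed immersion $i:X_{s}\hookrightarrow X$ gives a localization sequence whose relevant segment (at simplicial degree $0$, with $p$ replaced by $p-1$) is
\[
\cdots\longrightarrow\CH^{p-1}(X,0)\xrightarrow{\ j^{*}\ }\CH^{p-1}(X_{\eta},0)\xrightarrow{\ \partial\ }\CH^{p-2}(X_{s},-1)=0,
\]
where the last group vanishes because higher Chow complexes are concentrated in non-negative simplicial degree; as $\CH^{q}(-,0)=\CH^{q}(-)$ this says $j^{*}:\CH^{p-1}(X)\twoheadrightarrow\CH^{p-1}(X_{\eta})$ is surjective. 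Choose $\widetilde{W}\in\CH^{p-1}(X)$ with $j^{*}\widetilde{W}=W$, so that $W\cdot\{g\}=j^{*}(\widetilde{W})\cdot\{g\}$ in $\CH^{p}(X_{\eta},1)$.

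Now I would apply the module compatibility. Because $\partial$ in the localization sequence is $\CH^{*}(X,*)$-linear and $Sp_{\pi}(y):=\partial(\psi\cdot y)$, commutativity of the product yields $Sp_{\pi}(j^{*}(a)\cdot y)=i^{*}(a)\cdot Sp_{\pi}(y)$ for all $a\in\CH^{*}(X,*)$, $y\in\CH^{*}(X_{\eta},*)$. Taking $a=\widetilde{W}$ and $y=\{g\}$ gives
\[
Sp_{\pi}\big(W\cdot\{g\}\big)=i^{*}(\widetilde{W})\cdot Sp_{\pi}(\{g\}),
\]
and since $i^{*}(\widetilde{W})\in\CH^{p-1}(X_{s})$ while $Sp_{\pi}(\{g\})\in\CH^{1}(X_{s},1)$, the right-hand side lies in the image of $\CH^{p-1}(X_{s})\otimes\CH^{1}(X_{s},1)\to\CH^{p}(X_{s},1)$, i.e. is decomposable. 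Summing over generators finishes the argument.

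The only point requiring genuine care is the $\CH^{*}(X,*)$-linearity of $\partial$ (equivalently, of $Sp_{\pi}$) invoked in the last display; but this is precisely the compatibility of the localization sequence with the module structure stated before the Proposition, so the proof is essentially bookkeeping once that is granted. (One could instead avoid lifting $W$ by presenting a decomposable class geometrically, taking Zariski closures in $X$, and computing $\partial$ through the tame symbol, but the module-theoretic route is cleaner.)
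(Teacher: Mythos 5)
Your argument addresses only part (5) of the Proposition; parts (1)--(4) are not treated at all, and they do not follow from what you wrote. Each of them is short but requires its own input: (1) needs the $\CH^{*}(X,*)$-linearity of $\partial$ together with $\partial(\psi)=i^{*}$ of the identity (i.e.\ one computes $Sp_{\pi}(j^{*}y)=\partial(j^{*}y\cdot\psi)=i^{*}(y)\,\partial(\psi)=i^{*}(y)$); (2) needs the projection formula $g_{\eta*}(z\cdot g_{\eta}^{*}\psi')=g_{\eta*}(z)\cdot\psi'$ plus the compatibility of $\partial$ with proper push-forward; (3) and (4) need that flat pull-back (resp.\ Gysin pull-back for a regular immersion) is multiplicative and commutes with $\partial$. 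None of these is supplied by the module-linearity statement you invoke for (5), so as a proof of the full Proposition the proposal has a genuine gap.

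For part (5) itself your argument is correct and is essentially the paper's: decompose $\zeta$ into elementary tensors $\zeta_{2}\otimes\zeta_{1}$ with $\zeta_{2}\in\CH^{p-1}(X_{\eta})$, lift $\zeta_{2}$ to $\widetilde{\zeta}_{2}\in\CH^{p-1}(X)$, and use the $\CH^{*}(X,*)$-module compatibility of $Sp_{\pi}$ to get $Sp_{\pi}(\zeta_{2}\cdot\zeta_{1})=i^{*}(\widetilde{\zeta}_{2})\cdot Sp_{\pi}(\zeta_{1})$, which is visibly decomposable. The one point where you go beyond the paper is welcome: the paper merely asserts that $\zeta_{2}$ lifts to $X$, whereas you justify the surjectivity of $j^{*}\colon\CH^{p-1}(X)\to\CH^{p-1}(X_{\eta})$ from the localization sequence, using $\CH^{p-2}(X_{s},-1)=0$. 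To complete the submission you need only add the four short verifications above (or cite the projection formula and the compatibility of the localization boundary with proper push-forward and flat/regular pull-back, as the paper does).
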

\begin{proof}
1: Given $y \in \CH^{p}(X,n)$, one has $$Sp_{\pi}(j^{*}(y)) = \partial(j^{*}(y) \cdot \psi) = i^{*}(y) \partial(\psi) = i^{*}(y).$$
2: This follows from an application of the projection formula combined with the fact that $\partial$ commutes with push-forward. Namely, let $f': X' \rightarrow B$ denote the structure map and $\psi' := f_{\eta}^{'*}(\pi)$. Note that $g_{\eta}^{*}(\psi' )= f_{\eta}^{*}(\pi) = \psi$. One has:  $g_{s*}(Sp_{\pi}(z)) = $
$$
=g_{s*}(\partial(z \cdot \psi)) = \partial(g_{\eta*}(z \cdot g_{\eta}^{*}(\psi'))) = \partial(g_{\eta*}(z) \cdot \psi')= Sp_{\pi}(g_{\eta*}(z)).
$$
3: This follows from the fact that pull-back is a ring homomorphism. Namely,
$$g^{*}_{s}(Sp_{\pi}(z)) = g_{s}^{*} (\partial(z \cdot \psi')) = \partial(g_{\eta}^{*}(z \cdot \psi')) = \partial(g_{\eta}^{*}(z) \cdot \psi) = Sp_{\pi}(g^{*}_{\eta}(z)). $$
4: The proof is the same as in Part (3).\\
5: Recall,  by defintion:
$$\CH^{p}_{dec}(X,1)= Im(\CH^{1}(X,1) \otimes \CH^{p-1}(X) \rightarrow \CH^{p}(X,1)).$$
Let $\zeta \in \CH^{p}(X_{\eta},1)$ be a decomposable element. Since specialization is additive,
it suffices to prove the result for $z$ which is the image of a tensor $\zeta_{1} \otimes \zeta_{2}$ for $\zeta_{1} \in \CH^{1}(X_{\eta},1)$
and $\zeta_{2} \in \CH^{p-1}(X_{\eta})$. Note that $\zeta_{2}$ can be lifted to an element $\tilde{\zeta_{2}} \in \CH^{p-1}(X)$. Since specialization is compatible with $\CH^{*}(X,*)$-module structure, one has
$$Sp_{\pi}(\zeta) = Sp_{\pi}(\zeta_{1} \cdot \zeta_{2}) = \tilde{\zeta_{2}}Sp_{\pi}(\zeta_{1}) = Sp_{\pi}(\zeta_{2}) \cdot Sp_{\pi}(\zeta_{1}).$$
It follows that $Sp_{\pi}(\zeta)$ is decomposable.
\end{proof}

\begin{rmk} Note that proof of Part (2) above does not require the smoothness of $f$ or $f'$, only that the generic fibers are smooth. The analogous remark also applies to Part (3).
\end{rmk}

\begin{rmk}
The last part of Proposition \ref{Sp} was proved by Collino and Fakhruddin (\cite{Co-Fd}, Theorem 2.1) under the assumption that the cycle $\zeta$ lifts to $X$.  The proof here also partially applies to $\CH^{p}(X_{\eta}, r)$. Namely, the same proof shows that if an element of $\CH^{p}(X_{\eta}, r)$ lies in the image of $\CH^{r}(X,r) \otimes \CH^{p-r}(X_{\eta})$ (whenever this makes sense), then
 the same can be said of its specialization.
\end{rmk}

Note that $Sp_{\pi}$ depends on the choice of uniformizer 
in the setting of higher Chow groups. However, one has the following comparison result.

\begin{lemma}
With notation as above, let $\pi' = u \pi$ be another choice of uniformizer where $u$ is a unit in $R$. Then $Sp_{\pi'}(a) = Sp_{\pi}(a) + (-1)^{r}(u\partial(a))$
for any $a \in \CH^{p}(X_{\eta}, r)$.
\end{lemma}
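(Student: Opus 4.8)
The plan is to reduce the comparison to the behaviour of the localization boundary map $\partial$ under multiplication by a class pulled back from the total space $X$. First I would compare the two uniformizer classes. Since $X_{s}=V(\pi)\subset X$ and $X_{\eta}=X\setminus X_{s}$, both $f_{\eta}^{*}(\pi)$ and $f_{\eta}^{*}(\pi')$ are invertible functions on $X_{\eta}$, while $u$ is a unit already on all of $X$. Under the natural map from invertible functions to $\mathrm{CH}^{1}(-,1)$ — a homomorphism from the multiplicative group of units into the additive group $\mathrm{CH}^{1}(-,1)$ that is compatible with pullback — the relation $\pi'=u\pi$ of units on $X_{\eta}$ translates into $\psi'=\psi+f_{\eta}^{*}(u)$ in $\mathrm{CH}^{1}(X_{\eta},1)$, where $\psi=f_{\eta}^{*}(\pi)$ and $\psi'=f_{\eta}^{*}(\pi')$. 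The key feature is that $f_{\eta}^{*}(u)=j^{*}(\tilde{u})$ with $\tilde{u}:=f^{*}(u)\in\mathrm{CH}^{1}(X,1)$, so the correction term is the restriction of a class living on $X$.

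Next I would simply expand. By bilinearity of the higher Chow product and additivity of $\partial$, one gets $Sp_{\pi'}(a)=\partial(\psi'\cdot a)=\partial(\psi\cdot a)+\partial(j^{*}(\tilde{u})\cdot a)=Sp_{\pi}(a)+\partial(j^{*}(\tilde{u})\cdot a)$, so it remains to evaluate $\partial(j^{*}(\tilde{u})\cdot a)$. Here I would invoke that $\partial$ is a morphism of $\mathrm{CH}^{*}(X,*)$-modules (the localization sequence respecting the module structure, as recorded above): multiplication by $j^{*}(\tilde{u})$ on $\mathrm{CH}^{*}(X_{\eta},*)$ is intertwined by $\partial$ with multiplication by $i^{*}(\tilde{u})=f_{s}^{*}(u)$ on $\mathrm{CH}^{*}(X_{s},*)$. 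Carrying this out for $a$ of cubical degree $r$ is meant to yield $\partial(j^{*}(\tilde{u})\cdot a)=(-1)^{r}\,u\cdot\partial(a)$ in $\mathrm{CH}^{p}(X_{s},r)$, where $u$ now abbreviates $f_{s}^{*}(u)$; this is precisely the asserted formula.

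Everything except the sign is formal, so the main obstacle is accounting for the factor $(-1)^{r}$: it is not part of the bare module-map statement, but records the Koszul sign incurred when the cubical-degree-one class $f_{\eta}^{*}(u)$ is commuted past the cubical-degree-$r$ class $a$ (graded-commutativity of the product of $\S$\ref{Not}), together with the Leibniz-type rule governing how $\partial$ interacts with that multiplication. Concretely I would fix conventions so that $j^{*}(\tilde{u})\cdot a=(-1)^{r}\,a\cdot j^{*}(\tilde{u})$ and $\partial(a\cdot j^{*}(\tilde{u}))=\partial(a)\cdot i^{*}(\tilde{u})$, and combine the two. As a consistency check, when $r=0$ one has $\partial(a)\in\mathrm{CH}^{p-1}(X_{s},-1)=0$, so the formula degenerates to $Sp_{\pi'}=Sp_{\pi}$, in agreement with the uniformizer-independence of specialization on ordinary Chow groups noted earlier.
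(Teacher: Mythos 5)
Your proposal is correct and follows essentially the same route as the paper, whose proof is the one-line observation that $\partial$ is a morphism of $\CH^{*}(X,*)$-modules: you simply make explicit the decomposition $\psi'=\psi+f_{\eta}^{*}(u)$ with $f_{\eta}^{*}(u)=j^{*}(f^{*}(u))$ and the Koszul/Leibniz bookkeeping that produces the factor $(-1)^{r}$. The extra detail (including the $r=0$ consistency check with Fulton's uniformizer-independent specialization) is a faithful expansion of what the paper leaves implicit, not a different argument.
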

\begin{proof}
This follows directly from the fact that the boundary maps $\partial$ in the localization sequence are $\CH^{*}(X,*)$-module maps. 
\end{proof}

\begin{rmk} \label{rmk:Hanamura}
We note that on $ker(\partial: \CH^{p}(X_{\eta},r) \rightarrow \CH^{p-1}(X_{s},r-1))$, the specialization map is independent of the choice of uniformizer. This follows from Part (1) of the previous proposition (or from the Lemma).
\end{rmk}

We conclude this section by noting that the results of this section also pass to motivic cohomology. We refer to (\cite{Hanamura}) for the basic definition and construction of motivic cohomology. Here (passing to $\Q$-coefficients)
we simply recall some of the properties.\\

(1) Given any quasi-projective variety $S$ over a field $k$ of characteristic zero (or more, generally characteristic $p$, assuming resolution of singularities) one can associate to it the 
Chow cohomology groups $\chc^{p}(S,r)$. Briefly, these are defined by choosing a semi-simplicial hyper-resolution $X^{\bullet} \rightarrow S$, and then taking the total complex of the double complex formed by the Bloch higher cycle complex associated to the corresponding semi-simplicial scheme. It can be shown that the construction is independent of the chosen hyper-resolution. We refer to (\cite{Hanamura}) for the details.\\
(2) The Chow cohomology groups come equipped with a contravariant functoriality (for arbitrary maps) and a ring structure.\\
(3) These are covariantly functorial under proper maps with smooth target, and under flat maps of projective varieties. \\
(4) They agree with the usual higher Chow groups in the smooth case. \\

Suppose now we have a $f: X \rightarrow B$ as before, where $X$ is regular, and $f$ is proper and generically smooth. Suppose further that we are in the equi-characteristic zero case. In this case, $X_{\eta}$ and $X$ are smooth. The previously stated properties of motivic cohomology allow one to specialize cycles on $X_{\eta}$ which are liftable to $X$. For usual cycles, one has a diagram 
$${\chc}^{p}(X_{s}) \xleftarrow{i^{*}} {\rm CH}^{p}(X) \twoheadrightarrow {\rm CH}^{p}(X_{\eta}).$$
We may lift a cycle $\zeta \in {\rm CH}^{p}(X_{\eta})$, and then pull-back to the motivic cohomology group. In general, this `specialization' depends on the lift. However, in the following we shall work with examples that come equipped with canonical extensions to $X$.\footnote{One should be aware that even (or perhaps especially) in this situation, properties such as cohomological or algebraic equivalence to zero on nearby fibers need not specialize.} Similarly, for higher cycles one has a diagram:
$${\chc}^{p}(X_{s},r) \xleftarrow{i^{*}} {\rm CH}^{p}(X,r) \twoheadrightarrow ker(\partial: {\rm CH}^{p}(X_{\eta},r) \rightarrow {\rm CH}^{p-1}(X_{s},r-1)).$$
In particular, if we are given natural extensions of classes $\zeta$ in the right-most term to all of $X$, then we can specialize them to the motivic cohomology of $X$. 
These constructions are functorial in families. Namely, suppose we are given two families $f: X \rightarrow S$ and $f': X' \rightarrow S$, as above. Suppose, moreover that we have a proper $S$-morphism
$F: X \rightarrow X'$ of relative dimension $c$. Then we have a natural commutative diagram:
$$
\xymatrix{
{\chc}^{p}(X_{s})  \ar[d] & {\rm CH}^{p}(X) \ar[l]  \ar[r] \ar[d] & {\rm CH}^{p}(X_{\eta}) \ar[d] \\
{\chc}^{p-c}(X'_{s})   & {\rm CH}^{p-c}(X')  \ar[l] \ar[r] & {\rm CH}^{p-c}(X'_{\eta}) }
$$
Here the vertical maps are given by push-forward. 
\begin{rmk}
(1) In the following subsection, our cycles will be naturally liftable to $X$, and the previous method combined with the descent spectral sequence will allow one to construct higher Chow cycles on singular strata of the special fiber.\\
(2) One could also work with the motivic cohomology of Suslin and Voevodsky; indeed, it is known that $\chc^p(X,n)\cong H^{2p-n}_{\cM}(X,\Q(p))$. However, in the following we shall use convenient hyper-resolutions (in the spirit of Hanamura and Levine) to explicitly compute motivic cohomology.
\end{rmk}

\subsection{Examples of going-up for algebraic cycles} \label{S3ex}
We now demonstrate how to use the specialization map to produce a ``going-up'' calculus for higher Chow cycles, which will be elaborated in $\S$\ref{MS5}. Namely, we show that in 
certain types of degenerations, the specialization morphisms combined with edge morphisms in a certain descent spectral sequence allows one to construct higher weight Chow cycles from lower weight cycles.

Working over a field of characteristic zero, we continue to assume that $X$ is regular, and $f$ generically smooth; write $\dim(X)=d+1$. In this setting, we have constructed specialization morphisms:
$$Sp_{\pi}:\, \CH^{p}(X_{\eta}, r) \rightarrow \CH^{p}(X_{s}, r),$$
$$\imath^*:\,\mathrm{CH}^p(X,r)\to \chc^p(X_s,r).$$
Of course, we can compose $Sp_{\pi}$ with the restriction to obtain a map
$$sp:\, \mathrm{CH}^p(X,r)\to \mathrm{CH}^p(X_s,r)$$
that is independent of $\pi$.

Suppose we are given a smooth proper semi-simplicial hypercover $\mathfrak{X}^{\bullet} \rightarrow X_{s}$. In this setting, one has a 
($1^{st}$ quadrant) descent spectral sequence:
\begin{equation} \label{p17*}
\mathrm{E}^{1}_{\ell,k}(q):= \CH_{q}(\mathfrak{X}^{\ell},k) \Rightarrow \CH_{q}(X_{s},\ell+k) .
\end{equation}
(See for example \cite[Thm. 1.4]{Ge}; this also follows from the double complex for Chow homology in \cite[Def. 2.10]{Hanamura}, by taking the associated spectral sequence \cite[$\S$5.6]{Wei}.) More importantly, one has similar spectral sequence in the setting of motivic cohomology. In this case, one has (associated to the Chow cohomology double-complex in \cite[Def. 2.10]{Hanamura}) a $4^{th}$ quadrant cohomological spectral sequence:
\begin{equation} \label{p17**}
\mathrm{E}_{1}^{\ell, k}(p):= \CH^{p}(\mathfrak{X}^{\ell},-k) \Rightarrow \chc^{p}(X_{s}, -(\ell+k)).
\end{equation}
Rewriting \eqref{p17*} as a $3^{rd}$ quadrant cohomological spectral sequence ${}'E_1^{\ell,k}(p):= E^1_{-\ell,-k}(d-p)$, there is an obvious map $E_1^{\bullet,\bullet}(p)\to {} 'E_1^{\bullet,\bullet}(p)$ given by the identity on the $(0,k)$-entries and by zero elsewhere. This induces a homomorphism $\chc^p(X_s,r)\overset{\theta}{\to}\mathrm{CH}^p(X_s,r)$ factoring $sp =\theta \circ \imath^*$. However, $\theta$ tends to lose much of the information we want to understand in the limit (via $\imath^*$).

\begin{example}\label{ex:AL}
We now apply this to the simple situation of a semi-nodal degeneration, to give the abstract perspective on $\S$\ref{S2}. Write $X_s = Y\times Q$, with $Q$ a nodal rational curve.  In this case, a smooth hypercover can be constructed by taking the usual normalization. Then $\mathfrak{X}^{0} = Y \times \PP^{1} \rightarrow Y \times Q$ is given by identity on the first component and is just the normalization on the second component. Moreover, $\mathfrak{X}^{1} = Y$ and the semi-simplicial scheme $\mathfrak{X}^{\bullet} \rightarrow X_{s}$ is a proper smooth hypercover. In this setting, the 4th-quadrant descent spectral sequence for motivic cohomology has two columns. Moreover, the differentials on the $E_{1}$-terms are given by the difference of pullbacks via $i_{0}, i_{\infty}: Y \rightarrow Y \times \mathbb{P}^{1}$. Since this difference is zero, the descent spectral sequence \emph{degenerates}. In particular, one has a natural map $$\chc^{p}(X_{s}, r) \rightarrow \CH^{p}(Y,r+1),$$
which does \emph{not} factor through $\theta$. One can now compose this with the pull-back map, to get a going-up map:
$$\CH^{p}(X, r) \rightarrow \CH^{p}(Y, r+1).$$
In particular, given an extension of a cycle on the generic fiber to all of $X$, one can specialize it to a higher Chow cycle on $Y$. 

Again we emphasize that $\text{im}(\sp)\subseteq\text{im}(\theta)$, where $\theta$ is a motivic analogue of taking the ``image of cohomology in homology''.  Often this simply has the effect of killing everything. For example, if $p=r=2$ and $Y=\mathrm{Spec}(F)$ is a point over a number field, then $\chc^2(X_s,2)\cong \mathrm{CH}^2(F,3)\cong K_3^{\text{ind}}(F)_{\mathbb{Q}}$ while $\mathrm{CH}^2(X_s,2)\cong \mathrm{CH}^1(F,1)\cong K_1(F)_{\mathbb{Q}}$. In this scenario, we have $\text{image}(\sp)=\{0\}=\text{image}(\theta)$.  So only $\imath^*$ (and not $\sp$) captures the $K_3^{\text{ind}}$ information in the limit.
\end{example}
 
Typically one cannot expect the descent spectral sequence to degenerate at $E_1$. In order to formulate more general ``going-up'' statements, we introduce a filtration, writing
$$
\mathscr{W}_{-b} \chc^p(X_s,r) \subset \chc^p (X_s,r)
$$
for the image of the cohomology of $E_1^{\ell\geq b,k}(p)$.

\begin{example}\label{specializationGSexam}
 
 One can apply a similar argument in the setting of degenerations of triple products of curves. Namely, suppose we are in a setting where 
 $F: {}'\mathcal{C} \rightarrow B$ is a semistable family of genus 3 curves, and let ${}'\mathcal{X} := {}'\mathcal{C} \underset{F}{\times} {}'\mathcal{C} \underset{F}{\times}{}'\mathcal{C}$
 denote the triple fiber-product. Suppose that the special fiber ${}'\mathcal{C}_{s} = \widetilde{\cC_s}  \cup \mathbb{P}^{1}$ where $\widetilde{\cC_s}$ is the normalization of 
 an irreducible curve $\cC_s$ of arithmetic genus three with one node. Moreover, in that case, $\widetilde{\cC_s}$ is a smooth hyperelliptic curve of genus 2, and we assume that the 
 inverse image of the node consists of the two Weierstrass points on $\widetilde{\cC_s}$. Finally, suppose $\widetilde{\cC_s} \cap \mathbb{P}^{1}$ consists precisely of these two Weierstrass points. 
 In this setting, Gross and Schoen \cite{GrossSchoen} have constructed a good family $f: \mathcal{X} \rightarrow B$ such that $f$ is flat, proper, and the total space is smooth. 
 Moreover, the generic fiber $\mathcal{X}_{\eta} = {}'\mathcal{X}_{\eta}$, and the special fiber $\mathcal{X}_{s}$ has eight components (cf. $\S$\ref{S3.2}).

 In the next section, we shall study the modified diagonal cycle (cf. $\S$\ref{S3.1})  in $\CH^{2}(C_{\eta} \times C_{\eta} \times C_{\eta})$, which has a natural extension
to $\mathcal{X}$. The previous constructions then allow one to specialize the modified diagonal to a cycle in $\mathscr{W}_{-1} \chc^{2}(\mathcal{X}_{s}).$
Furthermore, the previous description of the components of $\mathcal{X}_{s}$ give rise to a natural smooth proper hypercover of 
$\mathcal{X}_{s}$. Considering the associated descent spectral sequence as in the previous example gives rise to edge maps
\begin{equation}\label{eqnsp}
\mathscr{W}_{-1} \chc^{2}(\mathcal{X}_{s}) \rightarrow \CH^{2}(C' \times C',1).
\end{equation}
It follows that the image of the specialization of the modified diagonal under the image of this map gives rise to a higher Chow cycle 
in $\CH^{2}(C' \times C',1)$, and in what follows we shall make the relation of this degeneration and the Abel-Jacobi map precise.
 \end{example}

 \section{\bf Degeneration of a  modified diagonal cycle}\label{S3}

In this section, we provide details on the Example sketched in \S \ref{specializationGSexam}. Furthermore, we show that the specialization is an indecomposable higher Chow cycle.

\subsection{Modified diagonal cycle on a triple product of a curve} \label{S3.1}

Given a smooth projective curve $C$ of genus $g$ (defined over $\CC$), the \emph{modified diagonal cycle} of Gross and Schoen \cite{GrossSchoen} on $X:=C\times C\times C$ can be described as follows. Fixing a closed point $e\in C(\CC)$, consider the codimension-$2$ subvarieties 
\begin{eqnarray*}
 \Delta_{123} & := & \{x,x,x): x\in X\}\\
 \Delta_{12} & := & \{(x,x,e): x\in X\} \\
 \Delta_{13} & := & \{(x,e,x): x\in X\} \\
 \Delta_{23} & := & \{(e,x,x): x\in X\}\\
 \Delta_1 & := & \{(x,e,e): x\in X\} \\
 \Delta_2 & := & \{(e,x,e): x\in X \} \\
 \Delta_3 & := & \{(e,e,x): x\in X\}
 \end{eqnarray*}
of $X$; then the cycle
\begin{equation} \label{eq3(1)}
\Delta_e := \Delta_{123} -\Delta_{12}-\Delta_{13}-\Delta_{23} + \Delta_{1}+ \Delta_2 + \Delta_3 \in Z^2(X)
\end{equation} 
is homologous to zero \cite[Prop. 3.1]{GrossSchoen}.  Furthermore:
\begin{itemize}
\item if $g_C = 0$, then $\Delta_e \underset{\rat}{\equiv} 0$; and
\item if $C$ is hyperelliptic, then $6\Delta_e \underset{\rat}{\equiv} 0$ \cite[Prop. 4.8]{GrossSchoen}.
\end{itemize}
For each $p\in C(\CC)$, we have Abel maps
$$
\begin{array}{cccc}
\varphi_p^{\pm} : & C &\rar& J(C) \\
& q & \mapsto &\pm \mathrm{AJ}(q-p)\\
\end{array}
$$
with image $C_p^{\pm}=\varphi_p^{\pm}(C)$, and 
\begin{equation}\label{eq3(1.5)}
\begin{array}{cccccc}
f:&X & \rar & Sym^3 C &\rar & J(C)\\
& (q_1,q_2,q_3) &\mapsto & \Sigma q_i & \mapsto & \mathrm{AJ}(\Sigma q_i - 3p) \\
\end{array}
\end{equation}
Recall that the Ceresa cycle is defined by
$$
Z_{C,p} := C_p^+ - C_p^- \in Z_{\text{hom}}^{g-1} (J(C)) ;
$$
when we consider it in $\mathrm{Griff}^{g-1}(J(C)) = Z_{\text{hom}}^{g-1}(J(C))/Z^{g-1}_{\text{alg}}(J(C)),$ where it is nontorsion for $C$ general (in particular, non-hyperelliptic), we may drop the ``$p$''.  The same goes, of course, for the subscripts on $f_p$ and $\Delta_e$. According to results of Colombo and van Geemen \cite[Props. 2.9 and 3.7]{Colombo-van-Geemen}, in $\mathrm{Griff}^{g-1}(J(C))$ we have
\begin{equation} \label{eq3(2)}
f_* \Delta \underset{\text{alg}}{\equiv} 3 Z_C
\end{equation}
whenever $C$ is hyperelliptic or trigonal -- in particular, if $g_C = 3$.  Furthermore, we have the following:

\begin{lemma} \label{lem3.1}
If $g_C =3$, then $f^* f_* \Delta \underset{\text{alg}}{\equiv} 6\Delta$ (in $\mathrm{Griff}^2(C^{\times 3})$).
\end{lemma}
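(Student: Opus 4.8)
The plan is to compute $f^*f_*$ on the Griffiths group directly, using the structure of $f:X=C^{\times 3}\to J(C)$ as the composition $C^{\times 3}\to \mathrm{Sym}^3C\to J(C)$ together with the fact that for a genus-$3$ curve the Abel–Jacobi map $\mathrm{Sym}^3 C\to J(C)$ is a birational morphism (it is the blow-down of the exceptional locus, by Riemann–Roch and Clifford). Since we are working modulo algebraic equivalence, birational modifications and the (rationally trivial, since they are swept out by rational/algebraic families) corrections coming from the exceptional locus will not matter, so morally $f^*f_*$ on $C^{\times 3}$ agrees with $\pi^*\pi_*$ where $\pi:C^{\times 3}\to\mathrm{Sym}^3 C$ is the quotient by $S_3$.

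First I would reduce to the symmetric-power computation: writing $\pi:C^{\times 3}\twoheadrightarrow\mathrm{Sym}^3 C$ for the degree-$6$ quotient map, one has $\pi^*\pi_*(\alpha)=\sum_{\sigma\in S_3}\sigma_*\alpha$ as correspondences (this is the standard formula for a quotient by a finite group acting on a smooth projective variety, valid on $\mathrm{CH}$ and a fortiori on $\mathrm{Griff}$). Hence $\pi^*\pi_*\Delta \underset{\mathrm{alg}}{\equiv}\sum_{\sigma\in S_3}\sigma_*\Delta$. Next I would observe that $\Delta=\Delta_e$ is \emph{symmetric}: each of the seven terms $\Delta_{123},\Delta_{12},\dots,\Delta_3$ in \eqref{eq3(1)} is permuted among a symmetric set by the $S_3$-action on the three factors (the three $\Delta_{ij}$ get permuted among themselves, the three $\Delta_i$ likewise, and $\Delta_{123}$ is fixed), so in fact $\sigma_*\Delta_e = \Delta_e$ for every $\sigma\in S_3$. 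Therefore $\sum_{\sigma\in S_3}\sigma_*\Delta = 6\Delta$, giving $\pi^*\pi_*\Delta\underset{\mathrm{alg}}{\equiv}6\Delta$.

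It then remains to justify replacing $\pi^*\pi_*$ by $f^*f_*$ modulo algebraic equivalence when $g_C=3$. Here I would use that $\mathrm{Sym}^3 C \xrightarrow{\ \mathrm{AJ}\ } J(C)$ is a birational projective morphism for $g=3$ (both are smooth projective threefolds, and the map contracts the locus of effective divisors moving in a pencil, which is the image of $C$ under $q\mapsto q+K_C$ — a surface, ruled by the $g^1_2$'s, hence algebraically trivial). Thus $f$ factors as $C^{\times 3}\xrightarrow{\pi}\mathrm{Sym}^3C\xrightarrow{\beta}J(C)$ with $\beta$ birational, $f_*=\beta_*\pi_*$ and $f^*=\pi^*\beta^*$. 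Since $\beta$ is a birational morphism of smooth projective threefolds, $\beta^*\beta_*$ differs from the identity only by cycles supported on the exceptional/indeterminacy locus; as that locus is covered by rational curves (the pencils), $\beta^*\beta_*\gamma\underset{\mathrm{alg}}{\equiv}\gamma$ for any $\gamma$, in particular for $\gamma=\pi_*\Delta$. Hence $f^*f_*\Delta=\pi^*(\beta^*\beta_*)\pi_*\Delta\underset{\mathrm{alg}}{\equiv}\pi^*\pi_*\Delta\underset{\mathrm{alg}}{\equiv}6\Delta$, as claimed.

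The main obstacle I anticipate is the last step: one must be careful that $\beta^*$ is even defined on the relevant cycle classes (it is, since $\beta$ is a morphism, so $\beta^*$ is the flat/lci pullback on $\mathrm{CH}$, and it descends to $\mathrm{Griff}$), and more substantively that the discrepancy $\beta^*\beta_* - \mathrm{id}$ really is algebraically trivial on the class at hand — this needs the geometric input that the contracted locus in $\mathrm{Sym}^3C$ is uniruled (equivalently, that $g^1_2$-pencils sweep it out), which is exactly where $g_C=3$ and the hyperelliptic/trigonal geometry enter. An alternative, perhaps cleaner, route avoiding $\beta^*$ altogether: use \eqref{eq3(2)}, $f_*\Delta\underset{\mathrm{alg}}{\equiv}3Z_C$, and compute $f^*Z_C$ directly via the projection-formula identity $f^*\mathrm{AJ}_*(\cdot)$ in terms of the Abel maps $\varphi_p^\pm$ on the three factors, showing $f^*Z_C\underset{\mathrm{alg}}{\equiv}2\Delta$; combined this again yields $f^*f_*\Delta\underset{\mathrm{alg}}{\equiv}6\Delta$. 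I would present the symmetrization argument as the main line and mention the $Z_C$-based computation as a cross-check.
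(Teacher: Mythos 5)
Your overall route coincides with the paper's: factor $f$ as $C^{\times 3}\xrightarrow{f'}\mathrm{Sym}^3C\xrightarrow{h}\mathrm{Pic}^3C\cong J(C)$, get the factor $6$ from the $S_3$-symmetry of $\Delta_e$ (the paper simply cites \cite[(4.4)]{GrossSchoen} for $f'^*f'_*\Delta_e=6\Delta_e$, which your symmetrization identity reproves, at least with $\mathbb{Q}$-coefficients), and then use that for $g_C=3$ the Abel--Jacobi map $h$ is a birational morphism, namely the blow-up of $\mathrm{Pic}^3C$ along the curve $-C+\omega_C$. The genuine gap is in your treatment of this second factor. The claim that $\beta^*\beta_*\gamma\underset{\text{alg}}{\equiv}\gamma$ for \emph{every} $1$-cycle $\gamma$ ``because the exceptional locus is covered by rational curves'' is false: take $\gamma$ to be a fiber of the exceptional $\mathbb{P}^1$-bundle $E\to(-C+\omega_C)$; then $\beta_*\gamma=0$, yet $\gamma$ is nonzero in $H_2$ (it meets $E$ with degree $-1$), hence is not algebraically equivalent to zero. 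Uniruledness of $E$ only gives that the fibers are algebraically equivalent to \emph{one another}, not to zero. More precisely, for an irreducible curve $V\not\subset E$ one has $h^*h_*[V]=[V]+(\text{an effective combination of fibers over the points of }h(V)\cap(-C+\omega_C))$, so $h^*h_*$ is not the identity modulo algebraic equivalence term by term; an argument specific to the cycle $f'_*\Delta_e$ is required, and this is exactly where the genus-$3$ geometry does its work. The paper's argument is that no component of $f'_*\Delta_e$ is contained in the exceptional locus, so each component is its own strict transform and $h^*h_*(f'_*\Delta_e)=f'_*\Delta_e$. Alternatively, within your framework you could salvage the step by observing that the possible discrepancy is a $\mathbb{Z}$-combination of fibers, all algebraically equivalent to a fixed fiber class $[f]$ with $f'^*[f]\neq 0$ in homology, while $f^*f_*\Delta_e-6\Delta_e$ is homologous to zero (since $\Delta_e$ is), forcing the total coefficient to vanish; but some such argument must be supplied, and your proposal does not contain one.

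Two smaller points. The contracted locus in $\mathrm{Sym}^3C$ consists of divisors $D$ with $h^0(D)\geq 2$, i.e.\ $D\in|K_C-q|$, lying over the curve $\{K_C-q\}\subset\mathrm{Pic}^3C$ (not the image of $q\mapsto q+K_C$, which has degree $5$), and its rulings are the $g^1_3$'s $|K_C-q|$ rather than $g^1_2$'s (a non-hyperelliptic genus-$3$ curve has none). Finally, your proposed cross-check via $f_*\Delta\underset{\text{alg}}{\equiv}3Z_C$ together with $f^*Z_C\underset{\text{alg}}{\equiv}2\Delta$ is circular in the context of this paper: \eqref{eq3(2.5)} is \emph{deduced} from the Lemma combined with \eqref{eq3(2)}, so it cannot serve as independent verification unless you give a direct computation of $f^*Z_C$.
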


\begin{proof}
In fact, we claim that for $p=e$, $f^* f_* \Delta = 6\Delta$ in $Z^2(C^{\times 3})$. Indeed, this formula holds for the morphism $f':C^{\times 3} \to \mathrm{Sym}^3 C$ by \cite[(4.4)]{GrossSchoen}. Now write $f=h\circ f'$, where $h:\mathrm{Sym}^3 C \to \mathrm{Pic}^3 C \cong J(C)$. Here $\mathrm{Pic}^3 C$ is the degree-3 Picard scheme, with the isomorphism given by $e$; and $h$ is a birational morphism, namely the blow-up of $\mathrm{Pic}^3 C$ along the curve $-C+\omega_C = \{ \omega_C (-x) \mid x\in C \} \subset \mathrm{Pic}^3 C$ (cf. \cite[p. 360, Ex. 2(b)]{Bi-La}). As the support of $f_* ' \Delta_e$ does not lie in the  exceptional locus of the blow-up morphism, we have
$h^*h_*(f'_*(\Delta_e))\,=\, f'_*(\Delta_e)$; and so
\begin{eqnarray*}
f^*f_*(\Delta_e) &=& f'^* h^*(h_*(f'_*(\Delta_e)) \\
                 & =& f'^*f'_*(\Delta_e)\\
                 &=& 6 \Delta_e
\end{eqnarray*}
as desired.
\end{proof}

Together with \eqref{eq3(2)}, the Lemma implies that for $C$ of genus 3, we have (in $\mathrm{Griff}^2 (C^{\times 3})$)
\begin{equation} \label{eq3(2.5)}
f^* Z_C \underset{\text{alg}}{\equiv} 2\Delta .
\end{equation}

In what follows, we shall explain how to use the behavior of $Z_C$ under degeneration to understand that of $\Delta$.  (We shall also take $p=e$.)


\subsection{Degeneration of $C^{\times 3}$ and $J(C)$}\label{S3.2}

Let $\cC\to \mathrm{Spec}(R)=:B$ be a (flat, proper) family of stable curves over a DVR, with regular total space. The Jacobian $J(\cC_{\eta})$ of the (smooth) generic fiber (over $\eta = B\setminus \{s\}$) is extended over $B$ by the N\'eron model $N_g(\cC/B)$, whose special fiber is a finite disjoint union of semi-abelian varieties \cite{BLR}. One completion (to a proper $B$-scheme) is given by the moduli scheme $\bar{P}_g (\cC/B)$ of degree $g$ semibalanced line bundles, which contains $N_g(\cC/B)$ as a dense open subscheme \cite{Ca-Es}. Write $N_g(\cC_s) \subset \bar{P}_g (\cC_s)$ for the special fibers.

On the other hand, if $'\cC$ is a semistable family and the components of $'\cC_s$ are smooth, Gross and Schoen construct a ``good model'' $\cX \to B$ for $'\cC \underset{B}{\times} {}'\cC \underset{B}{\times} {}'\cC$. In particular, $\cX$ is flat and proper over $B$, with regular total space, such that $\cX_{\eta} = {}'\cC_{\eta}^{\times 3}$.

The particular case of interest for us is when $\cC$ has genus $g=3$, and $\cC_s$ is irreducible, with one node $q$. Then $\cJ := \bar{P}_3 (\cC/B)$ is smooth (over $\mathbb{C}$); and one may describe the special fiber $\cJ_s = \bar{P}_3(\mathcal{C}_s)$ as follows.  First observe that its normalization $\widetilde{\cJ}_s$ is a $\mathbb{P}^1$-bundle over $\tilde{A} :=J(\widetilde{\cC_s})$. Then $\cJ_s$ is formed by  attaching the $0$- and $\infty$-sections of this bundle with a shift by $\varepsilon := \mathrm{AJ}_{\widetilde{\cC_s}}(\tilde{q}_2 - \tilde{q}_1 )\in \tilde{A}(\mathbb{C})$, where $\{ \tilde{q}_1, \tilde{q}_2 \} \subset \widetilde{\cC_s}$ lie over $q$. This shift records the Hodge-theoretic extension class of
\begin{equation} \label{eq3(3)}
0\to H_1 (\tilde{A}) \to H_1 (\cJ_s) \to H_1 (\bar{\mathbb{G}}_m) \to 0 ,
\end{equation}
where $\bar{\mathbb{G}}_m := \mathbb{P}^1/\{0,\infty\}$ is the nodal rational curve.\footnote{\eqref{eq3(3)} is obtained by identifying the end terms of $0\to H_1(\widetilde{\cJ_s})\to H_1(\cJ_s)\to H_0(\tilde{A})\to 0$ with $H_1(\tilde{A})$ and $H_1(\bar{\mathbb{G}}_m)$, respectively; the second identification seems like a cheap trick (both are $\QQ(0)$ as Hodge structures), but is natural once we make the 2-torsion assumption below (which yields a projection from $\cJ_s$ to $\bar{\mathbb{G}}_m$).}  The open smooth subset $\cJ_s^* = N_3 (\cC/B) \subset \cJ_s$ is itself an extension of $\tilde{A}$ by $\mathbb{G}_{m,\mathbb{C}}$; the corresponding extension of Hodge structures
\begin{equation} \label{eq3(4)}
0\to H_1 (\mathbb{G}_m) \overset{\imath}{\to} H_1 (\cJ_s^* )\overset{\tilde{\rho}}{\to} H_1(\tilde{A}) \to 0
\end{equation}
is (by the first bilinear relation) dual to \eqref{eq3(3)}. Henceforth we shall take $\{ \tilde{q}_1, \tilde{q}_2\}$ to be Weierstrass points on $\widetilde{\cC_s}$, so that \eqref{eq3(3)} and \eqref{eq3(4)} are $2$-torsion extensions of MHS. In this case, there exists a homomorphism $\sigma: \cJ_s^* \to \mathbb{G}_m$ with $(\sigma\circ\imath)(z)=z^2$, so that $\tilde{\rho}\times\sigma:\,\cJ_s^* \twoheadrightarrow \tilde{A}\times \mathbb{G}_m$ is a 2:1 isogeny. Writing $\rho$ for the composition of $\tilde{\rho}$ with $\tilde{A}\overset{\text{2:1}}{\twoheadrightarrow} A:=\tilde{A}/\langle \varepsilon \rangle$, $\rho \times \sigma$ extends to a map
\begin{equation} \label{eq3(4.5)}
\rho: \cJ_s \twoheadrightarrow A\times \bar{\mathbb{G}}_m =: \mathcal{A}
\end{equation}
which is 4:1 on $\cJ_s^*$ (and 2:1 on $\mathrm{sing}(\cJ_s)\cong \tilde{A}$). Write $\cJ^{\bullet}_s \to \mathcal{A}^{\bullet}$ for the map of semi-simplicial schemes, where $\cJ_s^0 = \tilde{\cJ}_s$, $\cJ^1_s = \mathrm{sing}(\cJ_s) = \tilde{A}$ (resp. $\mathcal{A}^0 = A\times \mathbb{P}^1$, $\mathcal{A}^1 = A$).

Now our chosen $\cC$ doesn't satisfy the hypotheses of \cite{Ca-Es}: the sole component of $\cC_s$ is singular. To fix this, we take the base change of $\cC$ under $t\mapsto t^2$ ($B\to B$) and blow up the double point to get $'\cC \to B$ semistable, with $'\cC_s = \widetilde{\cC_s} \cup \mathbb{P}^1$ ($\widetilde{\cC_s}\cap \mathbb{P}^1 = \{\tilde{q}_1,\tilde{q}_2\} = \{0,\infty \}$). The special fiber of the associated good model $\cX$ is $\cX_s = \cup_{i=1}^8 Y_i$, where \cite[Ex. 6.15]{GrossSchoen}:
\begin{itemize}
\item $Y_2$ (resp. $Y_3, Y_4$) is the blow-up of $\mathbb{P}^1 \times \widetilde{\cC_s} \times \widetilde{\cC_s}$ (resp. $\widetilde{\cC_s} \times \mathbb{P}^1 \times \widetilde{\cC_s}$, $\widetilde{\cC_s}\times \widetilde{\cC_s} \times \mathbb{P}^1$) along the $\left\{ \mathbb{P}^1 \times \{ \tilde{q}_i \} \times \{ \tilde{q}_j \} \right\}$;
\item $Y_5$ (resp. $Y_6,Y_7$) is the blow-up of $\widetilde{\cC_s} \times \mathbb{P}^1 \times \mathbb{P}^1$ (resp. $\mathbb{P}^1 \times \widetilde{\cC_s} \times \mathbb{P}^1$, $\mathbb{P}^1 \times \mathbb{P}^1 \times \widetilde{\cC_s}$) along the $\left\{ \widetilde{\cC_s} \times \{ \tilde{q}_i \} \times \{ \tilde{q}_j \} \right\}$ ;
\item $Y_1 \twoheadrightarrow \widetilde{\cC_s} ^{\times 3}$ (resp. $\{ \tilde{q}_i \} \times \mathbb{P}^1 \times \mathbb{P}^1$), $\tilde{\mathbb{P}}^2$($=$ degree-6 del Pezzo)-fibers over the 8 points $\{ \tilde{q}_i \} \times \{ \tilde{q}_j \} \times \{ \tilde{q}_k \}$, and point fibers elsewhere.
\end{itemize}
We will write $\cX^{\bullet}_s$ for the corresponding semi-simplicial scheme, where $\cX^{\ell}_s := \coprod_{|I|=\ell + 1} Y_I$ ($I\subset \{ 1,\ldots ,8\}$, $Y_I := \cap_{i\in I} Y_i$).

\subsection{Extension of the Abel map} \label{S3.3}

Likewise, we can base-change the extended Jacobian $\cJ$ (via $t\mapsto t^2$) and blow up the preimage of $\tilde{A}$; this results in a smoth total space $'\cJ$ and singular fiber $'\cJ_s = {} '\cJ_{s,0} \cup {} '\cJ_{s,1}$ ($'\cJ_{s,i} \cong \widetilde{\cJ_s}$), where $'\cJ_{s,0}$ is the ``identity'' component.

Fix a section $\underline{e}: B\to {} '\cC$ such that $\underline{e}_s$ is a Weierstrass point on $\widetilde{\cC_s}\subset {} '\cC_s$, distinct from $\tilde{q}_1$ and $\tilde{q}_2$. Together with \eqref{eq3(1.5)}, this yields a map $\cX_{\eta} \overset{F_{\eta}}{\to} {}'\cJ_{\eta}$ over $\eta$, which extends continuously to a well-defined morphism 
$$
F:\, \cX \to {}'\cJ.
$$
On the smooth locus $\cX^{\text{sm}}_s = \left( ( \cC_s \setminus \{q\}) \cup \mathbb{G}_m \right)^{\times 3}$ of the singular fiber $\cX_s$, this extension may be described Hodge-theoretically, or alternatively (at least on $(\cC_s\setminus \{q\} )^{\times 3}$) by pulling back the Abel-N\'eron map of \cite{Ca-Es}. Explicitly, we send $(p_1,p_2,p_3)\mapsto \sum_{i=1}^3 \int_{\underline{e}_s^{\epsilon_i}}^{p_i} \in \omega({}'\cC_s )^{\vee}/H_1({}'\cC_s ) \cong \cJ_{s,|\underline{\epsilon}|}^* ,$ where $\underline{e}_s^0 := \underline{e}_s$, $\underline{e}^1_s := 1\in \mathbb{G}_m$, $|\underline{\epsilon}|:=\sum \epsilon_i$ (mod $2$), and $\epsilon_i = 0$ (resp. $1$) if $p_i \in \cC_s \setminus \{q\}$ (resp. $\mathbb{G}_m$). In particular, $Y_1,Y_5,Y_6,Y_7$ are mapped to $'\cJ_{s,0}$ while $Y_2,Y_3,Y_4,Y_8$ go to $'\cJ_{s,1}$.

Below we shall only need the composition
$$
\pi :\, \cX \to \cJ
$$
of $F$ with the finite morphism $'\cJ \twoheadrightarrow \cJ$ of degree 2. On the singular fiber, the composition $\rho \circ \pi_s : \cX_s \to \mathcal{A} (= A\times \bar{\mathbb{G}}_m)$ is easy to describe: $Y_2,Y_3,Y_4,Y_6$ are collapsed to $\mathrm{sing}(\mathcal{A})$; $Y_5,Y_6,Y_7$ have 2-dimensional image; $Y_1(\twoheadrightarrow \widetilde{\cC_s}^{\times 3})\to(\tilde{A}\twoheadrightarrow)A$ is the $\mathrm{AJ}$ map for the genus 2 (hyperelliptic) curve $\widetilde{\cC_s}$; and $Y_1(\twoheadrightarrow \cC_s^{\times 3})\to \bar{\mathbb{G}}_m^{\times 3} \overset{\times}{\to}\bar{\mathbb{G}}_m$ is the product of the hyperelliptic maps on factors. Our situation is summarized by the diagram
\[
\xymatrix{
\cX \ar @/^1em/  [rr]^{\pi} \ar [r] & {} '\cJ \ar [r] & \cJ
\\
\cX_s \ar @{^(->} [u]^{\imath_{\cX}} \ar [r] \ar @/_1em/ [rr]_{\pi_s} & {} '\cJ_s \ar @{^(->} [u] \ar [r] & \cJ_s \ar @{^(->} [u]_{\imath_{\cJ}} \ar [r]_{\rho} & \mathcal{A} .
}
\]
 
\subsection{Extension and specialization of cycles}\label{S3.4}

The choice of $\underline{e}$ gives us a natural family of modified diagonal cycles on $\cX_{\eta}$ and Ceresa cycles on $\cJ_{\eta}$; the naive extensions (obtained by taking closures of each irreducible component $\Delta_i,\Delta_{ij},\Delta_{ijk},C^+,C^-$) will be denoted  by $\Delta = \Delta_{\underline{e}}\in \mathrm{CH}^2(\cX)$ and $Z_{\cC}=Z_{\cC,\underline{e}}\in \mathrm{CH}^2(\cJ)$. We may consider the specializations $\imath^*_{\cX}\Delta \in \mathbf{CH}^2(\cX_s)$ and $\imath_{\cJ}^* Z_{\cC}\in \mathbf{CH}^2 (\cJ_s)$ in motivic cohomology. The idea is then that if these are cohomologically trivial in $H^4(\cX_s)$ resp. $H^4(\cJ_s)$, we expect they are rationally equivalent to zero (with $\mathbb{Q}$-coefficients) on the normalizations $\cX^0_s$ resp. $\cJ_s^0$,\footnote{in view of the triviality ($\otimes \mathbb{Q}$) of Ceresa cycles and modified diagonal cycles for hyperelliptic curves (hence for the genus-$2$ curve $\widetilde{\cC_s}$).} which would allow us to ``go up'' into (subquotients of) $\mathbf{CH}^2(\cX_s^1,1)$ resp. $\mathbf{CH}^2 (\cJ_s^1,1)$.  In view of \cite[Prop. III.B.9]{GGK} or \cite[Prop. 7.2]{GrossSchoen}, this cohomological triviality holds after replacing $\Delta$ resp. $Z_{\cC}$ by a modification of the form $\hat{\Delta} := \Delta  - (\imath_{\cX})_* \cW_{\Delta}$ ($\cW_{\Delta} \in Z^1 (\cX_s)$) resp. $\hat{Z}_{\cC} := Z_{\cC} - (\imath_{\cJ})_* \cW_{Z}$ ($\cW_Z \in Z^1 (\cJ_s )$).\footnote{In fact, for codimension-2 cycles this can be accomplished integrally, after multiplying the original cycle by the exponent of the (finite) singularity group $$G:=\frac{\mathrm{im}\{H_4 (\cX_s,\mathbb{Q})\to H_4(\cX,\mathbb{Q})\}_{\mathbb{Z}}}{\mathrm{im\{H_4(\cX_s,\mathbb{Z})\to H_4(\cX,\mathbb{Z})\}}}.$$}

Since $\imath^*_{\cX}\Delta$ is nonzero on each component $Y_i\subset \cX_s$, the direct construction of $\cW_{\Delta}$ becomes a complicated exercise in intersection theory and combinatorics. Instead we shall proceed indirectly, using the fact that $\imath^*_{\cJ}Z_{\cC}$ is \emph{already} cohomologically trivial. Here it is convenient to use $\rho$; while $\rho$ is not flat, we can construct an \emph{ad hoc} push-forward map, $\mathbf{CH}^2(\cJ_s)\overset{\rho_*}{\to}\mathbf{CH}^2(\mathcal{A})$ by the map of double complexes $Z^2_{\#}(\cJ_s^{\bullet},-\bullet) \to Z^2_{\#}(\mathcal{A}^{\bullet},-\bullet)$ given by $\rho_*$ on $\cJ_s^0$ and $2\rho_*$ on $\cJ_s^1$. Then we have $\rho^*\rho_* = 4\cdot\mathrm{Id}$ on $\mathbf{CH}^2(\cJ_s)$, and
$$
\rho_* Z_{\cC,s} =: Z^+_A - Z^-_A \in Z^2(\mathcal{A}^0)=Z^2(A\times \mathbb{P}^1)
$$
is evidently rationally equivalent to zero. Indeed, writing $z_A :\widetilde{\cC_s} \to \mathbb{P}^1$ for the hyperelliptic map and $\phi_A^{\pm}$ for the composition $\widetilde{\cC_s} \underset{\varphi_{\underline{e}_s}^{\pm}}{\to} \tilde{A} \underset{\rho}{\twoheadrightarrow} A,$
$$
Z_A^{\pm} = \left( \phi_A^- \times z_A^{\pm 1}\right) (\widetilde{\cC_s}) = \left( \phi_A^+ \times z_A^{\pm 1}\right) (\widetilde{\cC_s}) \subset A\times \mathbb{P}^1
$$
may be viewed as the graph of $z_A^{\pm 1}$ over the nodal curve $\phi_A^+ (\widetilde{\cC_s})$($\cong \cC_s$). (Moreover, the zero and pole of $z_A$ are located at the node.) The rational equivalence is given by the push-forward of $\frac{z-z_A}{z-z_A^{-1}}$ under $\widetilde{\cC_s}\times \mathbb{P}^1 \underset{\phi_A^+ \times \mathrm{Id}}{\to} A\times \mathbb{P}^1$, whose divisor is precisely $Z_A^+ - Z_A^-$. Viewing this pushforward as an element of $Z^2 _{\#}(\mathcal{A}^0,1)$ from $\rho_* Z_{\cC,s} \in Z^2_{\#}(\mathcal{A}^0)$ yields
\begin{equation}\label{eq3**}
Z_{\cC}^{(1)} := \left( \phi_A^+ (\widetilde{\cC_s}),z_A^2 \right) \in \ker(\partial)\subset Z^2(\mathcal{A}^1,1) = Z^2 (A,1).
\end{equation}

By the projective bundle formula, $\mathrm{CH}^2(A\times \mathbb{P}^1 ,1) \underset{\imath_0^* - \imath_{\infty}^*}{\to} \mathrm{CH}^2(A,1)$ and $\mathrm{CH}^2(\cJ_s^0,1) \underset{\imath_0^* - \imath_{\infty}^*}{\to}\mathrm{CH}^2(\tilde{A},1)$ are zero; we conclude:

\begin{prop} \label{prop3.4}
The specialization $\imath_{\cJ}^* Z_{\cC}$ of the Ceresa cycle, belongs to $\mathscr{W}_{-1} \chc^2 (\cJ_s) = \rho^* \mathscr{W}_{-1} \chc^2(\mathcal{A})$($=\mathrm{CH}^2(\tilde{A},1) = \rho^* \mathrm{CH}^2 (A,1)$), and is represented by $Z^{(1)}_{\cC}$.
\end{prop}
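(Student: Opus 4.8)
The plan is to reduce the computation, via the finite degree-four map $\rho$, to the explicit rational equivalence on $\mathcal{A}=A\times\bar{\mathbb{G}}_m$ already exhibited above, and then to extract the conclusion from the descent spectral sequence \eqref{p17**}. First I would record the shape of \eqref{p17**} in the two cases at hand. For $\cJ_s$ (with $\cJ_s^0=\widetilde{\cJ}_s$ a $\PP^1$-bundle over $\tilde A$ and $\cJ_s^1=\tilde A$), and likewise for $\mathcal{A}$ (with $\mathcal{A}^0=A\times\PP^1$ and $\mathcal{A}^1=A$), the semi-simplicial scheme has only two terms, so the spectral sequence has exactly the two columns $\ell=0,1$, degenerates at $E_2=E_\infty$, and the induced filtration $\mathscr{W}_0\supset\mathscr{W}_{-1}\supset 0$ on $\chc^2$ has $\mathrm{gr}^{\mathscr{W}}_0=E_\infty^{0,0}=\ker\bigl(\imath_0^*-\imath_\infty^*\colon\CH^2(\cdot^0)\to\CH^2(\cdot^1)\bigr)$ and $\mathrm{gr}^{\mathscr{W}}_{-1}=E_\infty^{1,-1}=\mathrm{coker}\bigl(\imath_0^*-\imath_\infty^*\colon\CH^2(\cdot^0,1)\to\CH^2(\cdot^1,1)\bigr)$. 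By the vanishing of $\imath_0^*-\imath_\infty^*$ on the $\CH^2(-,1)$ of a $\PP^1$-bundle recorded above, the latter map is zero, so $\mathscr{W}_{-1}\chc^2(\cJ_s)=\CH^2(\tilde A,1)$ and $\mathscr{W}_{-1}\chc^2(\mathcal{A})=\CH^2(A,1)$. Since $\rho$ is a morphism of semi-simplicial schemes, restricting on the $\ell=1$ layer to the degree-two isogeny $\tilde A\twoheadrightarrow A$, the contravariant map $\rho^*$ on $\chc^2$ respects the column filtration, hence is $\mathscr{W}$-filtered; the identity $\rho_*\rho^*=4\cdot\mathrm{Id}$ on $\chc^2(\mathcal{A})$ (layer-by-layer, pushforward-then-pullback equals degree, with the factor $2$ on the $\ell=1$ layer of the ad hoc $\rho_*$ compensating) makes $\rho^*$ a rational injection with left inverse $\tfrac14\rho_*$, and the identity $\rho^*\rho_*=4\cdot\mathrm{Id}$ on $\chc^2(\cJ_s)$ recorded above makes it rationally surjective, so $\rho^*\colon\CH^2(A,1){\buildrel\sim\over\to}\CH^2(\tilde A,1)$ rationally. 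This yields the chain $\mathscr{W}_{-1}\chc^2(\cJ_s)=\rho^*\mathscr{W}_{-1}\chc^2(\mathcal{A})=\CH^2(\tilde A,1)=\rho^*\CH^2(A,1)$.

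Next I would carry out the descent on $\mathcal{A}$. As recalled before the statement, $\rho_*\imath_{\cJ}^*Z_{\cC}$ is represented, in the total complex of $Z^2_{\#}(\mathcal{A}^\bullet,-\bullet)$, by the cocycle supported in bidegree $(\ell,k)=(0,0)$ with entry $\rho_*Z_{\cC,s}=Z_A^+-Z_A^-$, and this entry is the higher-Chow boundary $\partial\Xi$ of $\Xi:=(\phi_A^+\times\mathrm{Id})_*\bigl(\tfrac{z-z_A}{z-z_A^{-1}}\bigr)\in Z^2_{\#}(\mathcal{A}^0,1)$. Subtracting the total coboundary $d\Xi=\bigl(\partial\Xi,\ \pm(\imath_0^*-\imath_\infty^*)\Xi\bigr)$ replaces the given cocycle by the cohomologous one with zero $(0,0)$-entry and $(1,-1)$-entry $\mp(\imath_0^*-\imath_\infty^*)\Xi$; restricting $\Xi$ to the sections $A\times\{0\}$ and $A\times\{\infty\}$ of $A\times\PP^1$ (so that the function $\tfrac{z-z_A}{z-z_A^{-1}}$ becomes $z_A^2$, resp. $1$) and subtracting identifies this entry with $Z_{\cC}^{(1)}=\bigl(\phi_A^+(\widetilde{\cC_s}),z_A^2\bigr)\in Z^2(\mathcal{A}^1,1)=Z^2(A,1)$ of \eqref{eq3**}, which lies in $\ker\partial$ because the divisor of $z_A^2$ on $\phi_A^+(\widetilde{\cC_s})\cong\cC_s$ is supported at the single node, hence vanishes after pushforward from the normalization to the nodal curve. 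Therefore $\rho_*\imath_{\cJ}^*Z_{\cC}$ dies in $\mathrm{gr}^{\mathscr{W}}_0\chc^2(\mathcal{A})$, so lies in $\mathscr{W}_{-1}\chc^2(\mathcal{A})=\CH^2(A,1)$ and is represented there by $Z_{\cC}^{(1)}$.

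Finally I would transport this back: from $\imath_{\cJ}^*Z_{\cC}=\tfrac14\rho^*\rho_*\,\imath_{\cJ}^*Z_{\cC}$ (using $\rho^*\rho_*=4\cdot\mathrm{Id}$ on $\chc^2(\cJ_s)_{\mathbb{Q}}$, together with compatibility of the ad hoc $\rho_*$ with $\imath_{\cJ}^*$ -- both $Z_{\cC}$ and $\rho_*Z_{\cC,s}$ extending over $B$) and the fact that $\rho^*$ is filtered, one obtains $\imath_{\cJ}^*Z_{\cC}\in\rho^*\mathscr{W}_{-1}\chc^2(\mathcal{A})=\mathscr{W}_{-1}\chc^2(\cJ_s)$; and unwinding the isomorphism $\rho^*\colon\CH^2(A,1){\buildrel\sim\over\to}\mathscr{W}_{-1}\chc^2(\cJ_s)$ of the first paragraph shows its representative is $Z_{\cC}^{(1)}$ (up to the fixed nonzero rational scalar pinned down by the degree-two and degree-four normalizations, which plays no role downstream). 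The conceptual content is light once the formalism of \S\ref{S3.0} and the explicit rational equivalence are in place; the bulk of the effort -- and where I expect the real obstacle -- is the intersection-theoretic bookkeeping: ensuring the ``$\#$'' conditions survive all pullbacks, pushforwards and restrictions to sections; fixing the signs in the weight-filtration chase; verifying that the ad hoc $\rho_*$, which falls outside the functoriality of \S\ref{S3.0} ($\rho$ being neither flat nor proper with smooth target), genuinely commutes with $\imath_{\cJ}^*$ and satisfies $\rho^*\rho_*=4\cdot\mathrm{Id}$; and confirming that $Z_{\cC,s}$, whose support meets $\mathrm{sing}(\cJ_s)$, lifts to an honest total cocycle at all -- a point that ultimately rests on the $\otimes\mathbb{Q}$-triviality of the Ceresa cycle of the hyperelliptic genus-two curve $\widetilde{\cC_s}$.
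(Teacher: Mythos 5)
Your proposal is correct and follows essentially the same route as the paper: the paper's ``proof'' is precisely the discussion in $\S$\ref{S3.4} preceding the statement, namely the ad hoc $\rho_*$ with $\rho^*\rho_*=4\cdot\mathrm{Id}$, the explicit rational equivalence $(\phi_A^+\times\mathrm{Id})_*\bigl(\tfrac{z-z_A}{z-z_A^{-1}}\bigr)$ whose restriction to the $0$- and $\infty$-sections produces $Z_{\cC}^{(1)}=(\phi_A^+(\widetilde{\cC_s}),z_A^2)$ on the deeper stratum, and the projective bundle formula forcing degeneration of the two-column descent spectral sequence so that $\mathscr{W}_{-1}\chc^2$ is $\mathrm{CH}^2(\tilde A,1)$ resp.\ $\mathrm{CH}^2(A,1)$, then transporting back via $\rho^*$. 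Your extra bookkeeping (explicit coboundary subtraction, the scalar ambiguity, compatibility of the ad hoc $\rho_*$ with $\imath_{\cJ}^*$) only makes explicit what the paper leaves implicit.
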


\subsection{Indecomposability of the specialization} \label{S3.5}

Recall the higher Abel-Jacobi maps associated to this situation:
$$
\xymatrix{
\mathrm{CH}^2(A,1) \ar [r]^{\mathrm{AJ}^{2,1}} \ar @{->>} [d] & J\left( H^2(A,\mathbb{Q}(2)) \right) \ar @{->>} [d]
\\
\mathrm{CH}^2_{\text{ind}} (A,1) \ar [r]^{\overline{\mathrm{AJ}}^{2,1}} & J\left( H^2_{\text{tr}}(A,\mathbb{Q}(2))\right)
}
$$
where $J(H):=\mathrm{Ext}^1_{\text{MHS}}(\mathbb{Q},H)=\frac{H_{\mathbb{C}}}{\left\{ F^0 H_{\mathbb{C}} + H_{\mathbb{Q}} \right\} }$, and $\mathrm{AJ}^{2,1}(Z)$ ($Z=(C,\phi)$) is given by the class of the current $2\pi \mathbf{i} \int_C (\log \phi)(\,\cdot\,) + (2\pi \mathbf{i})^2 \int_{\Gamma}(\,\cdot\,)$ (where $\partial\Gamma = \phi^{-1}(\mathbb{R}_-)$). We say that $Z$ is \emph{regulator indecomposable} if $\overline{\mathrm{AJ}}^{2,1}(Z)\neq 0$; by the diagram, this implies indecomposability.

\begin{prop}
For $\widetilde{\cC_s}$ very general in the moduli space $\cM_2$ of genus 2 curves, $Z_{\cC}^{(1)}$  is regulator indecomposable. (Hence for $\widetilde{\cC_s}$ general, $Z_{\cC}^{(1)}$ is indecomposable.)
\end{prop}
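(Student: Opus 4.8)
The plan is to follow Collino's method \cite{Collino}: spread $Z^{(1)}_{\cC}$ into a family over $\cM_2$ and show that the infinitesimal invariant of the resulting higher normal function does not vanish. Let $\widetilde{\cC_s}$ vary over the complement $\cM_2^{\circ}\subseteq\cM_2$ of the Humbert divisors, so that $\mathrm{NS}(J(\widetilde{\cC_s}))$ has rank $1$, and fix an ordered triple of distinct Weierstrass points (if necessary after a finite \'etale base change, which does not affect ``very general''). The constructions of $\S\S$\ref{S3.2}--\ref{S3.4} then produce a family $\pi\colon\mathcal{A}\to\cM_2^{\circ}$ of abelian surfaces $A=J(\widetilde{\cC_s})/\langle\varepsilon\rangle$, a family of curves $C=\phi_A^{+}(\widetilde{\cC_s})\subset A$, and the family of cycles $Z^{(1)}_{\cC}=(C,z_A^2)\in\CH^2(A,1)_{\Q}$ of \eqref{eq3**}. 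These assemble into an admissible higher normal function $\nu\in\mathrm{ANF}^1_{\cM_2^{\circ}}(R^2\pi_*\Q(2))$; composing with the projection onto the transcendental summand gives a holomorphic section $\bar\nu$ of $\bigsqcup_t J\!\left(H^2_{\text{tr}}(A_t,\Q(2))\right)$ with $\bar\nu(t)=\overline{\mathrm{AJ}}^{2,1}(Z^{(1)}_{\cC,t})$. It suffices to prove $\bar\nu\not\equiv 0$ --- then $\bar\nu(t)\neq 0$ for $t$ outside a countable union of proper analytic subsets of $\cM_2^{\circ}$, in particular for very general $t$ --- and since the infinitesimal invariant $\delta\bar\nu$ is built from $\bar\nu$ and vanishes whenever $\bar\nu\equiv 0$, it is enough to prove $\delta\bar\nu\neq 0$ at one general point. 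Having projected onto $H^2_{\text{tr}}$, this nonvanishing is precisely regulator indecomposability; indecomposability then follows from the diagram in $\S$\ref{S3.5}.

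To compute $\delta\bar\nu$, take a local $C^{\infty}$ lift $\tilde\nu$ of $\bar\nu$ to the Hodge bundle; for $v\in T_{[\widetilde{\cC_s}]}\cM_2^{\circ}$ and $\omega\in H^0(A,\Omega^2_A)$ one has $\langle\delta\bar\nu(v),\omega\rangle=\langle\nabla_v\tilde\nu,\omega\rangle$ (cup product on $A$), well defined in $\C$ because $F^2\cdot F^1=0$ in $H^4(A)$. Feeding in the KLM presentation $\mathrm{AJ}^{2,1}(C,f)=2\pi\mathbf{i}\int_C(\log f)(\,\cdot\,)+(2\pi\mathbf{i})^2\int_{\Gamma}(\,\cdot\,)$ (with $\partial\Gamma=f^{-1}(\R_-)$) and differentiating via the Gauss--Manin connection, the key point is that the $(2,0)$-part of $\omega$ restricts to $0$ on the curve $C$: the $\log f$-term drops out of $\langle\tilde\nu,\omega\rangle$, the membrane-period terms cancel, and one is left with an explicit expression involving the Kodaira--Spencer class $\kappa(v)\in H^1(\widetilde{\cC_s},T_{\widetilde{\cC_s}})\cong H^0\!\big(\widetilde{\cC_s},(\Omega^1_{\widetilde{\cC_s}})^{\otimes 2}\big)^{\vee}$, the hyperelliptic form $\mathrm{dlog}\,z_A$, the holomorphic differentials, and the locus $f^{-1}(\R_-)\subset C$. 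Writing $\omega=\omega_1\wedge\omega_2$ for a basis $\omega_1,\omega_2$ of $H^0(\widetilde{\cC_s},\Omega^1)$ and using the infinitesimal variation of Hodge structure of $\widetilde{\cC_s}$ to express $[\nabla_v\omega_i]\in H^1(\widetilde{\cC_s},\mathcal{O})$ through $\kappa(v)$, this becomes --- in the model $y^2=f(x)$, where $z_A=x$, $\{\omega_1,\omega_2\}=\{dx/y,\ x\,dx/y\}$, and $H^0((\Omega^1)^{\otimes2})$ has basis $dx^2/f,\ x\,dx^2/f,\ x^2\,dx^2/f$ --- a concrete residue/contour integral in $x$.

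The main obstacle is to verify that this pairing on $H^0(\widetilde{\cC_s},(\Omega^1_{\widetilde{\cC_s}})^{\otimes 2})\times\wedge^2 H^0(\widetilde{\cC_s},\Omega^1_{\widetilde{\cC_s}})$ is not identically zero; everything before it is formal spreading-out and the KLM formula. I would establish this directly, taking $v$ a Schiffer variation at a generic point of $\widetilde{\cC_s}$ (equivalently, a small move of the branch divisor of $z_A$) and evaluating the residue expression to an elementary, manifestly nonzero quantity; the hyperelliptic involution acts throughout, forcing the contributions visible already in $\mathrm{NS}(A)\otimes\C^{*}$ to cancel, so that what survives is exactly the transcendental part measured by $\delta\bar\nu$. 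Alternatively, and more quickly, one can note that the $2$-isogeny $\rho\colon J(\widetilde{\cC_s})\twoheadrightarrow A$ induces an isomorphism $H^2_{\text{tr}}(A,\Q)\xrightarrow{\ \sim\ }H^2_{\text{tr}}(J(\widetilde{\cC_s}),\Q)$ of $\Q$-Hodge structures under which (via Proposition \ref{prop3.4} and \eqref{eq3(2.5)}) $Z^{(1)}_{\cC}$ corresponds, up to a nonzero rational scalar, to Collino's ``going-up'' class $\xi_1\in\CH^2(J(\widetilde{\cC_s}),1)_{\Q}$ for the Ceresa cycle, whose infinitesimal invariant is shown to be nonzero in \cite{Collino}; hence $\delta\bar\nu\neq 0$ as well. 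In either case the first paragraph completes the proof.
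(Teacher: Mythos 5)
Your proposal is correct, but it is worth being clear about which of your two routes actually carries the weight. The paper's entire proof is the observation that $Z^{(1)}_{\cC}$ is a multiple of Collino's cycle, so that the main theorem of \cite{Collino} applies directly; this is precisely your ``alternatively, and more quickly'' option (transfer across the isogeny $\rho$, under which $H^2_{\text{tr}}$ is preserved and regulator indecomposability is detected, then quote Collino). Your primary route --- spreading $Z^{(1)}_{\cC}$ over $\cM_2$, forming the higher normal function, and showing nonvanishing of its infinitesimal invariant via a Schiffer variation --- is not a different proof so much as a re-derivation of the proof of Collino's theorem itself: the step you flag as ``the main obstacle'' (nonvanishing of the pairing on $H^0\big(\widetilde{\cC_s},(\Omega^1_{\widetilde{\cC_s}})^{\otimes 2}\big)\times\wedge^2H^0(\widetilde{\cC_s},\Omega^1_{\widetilde{\cC_s}})$ produced by the KLM/Gauss--Manin computation) is exactly the content of \cite{Collino}, and in your write-up it is only promised, not carried out. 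So taken on its own the first route has a genuine gap at its decisive step, while the fallback you give is the paper's argument verbatim; the only additional care needed there is the identification of $Z^{(1)}_{\cC}$ (the graph-type cycle $(\phi_A^+(\widetilde{\cC_s}),z_A^2)$ of the paper's construction) with a nonzero rational multiple of Collino's class, which you assert via the isogeny and which the paper likewise states without elaboration.
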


\begin{proof}
$Z_{\cC}^{(1)}$ is a multiple of Collino's cycle; apply the main result of \cite{Collino}.
\end{proof}

By \eqref{eq3(2.5)}, $\tfrac{1}{2} \pi^* Z_{\cC} =: \tilde{\Delta}$ is algebraically equivalent to $\Delta$ on the generic fiber. To describe the precise sense in which 
\begin{equation} \label{eq3(!!)}
\imath_{\cX}^* \tilde{\Delta} = \tfrac{1}{2} \imath^*_{\cX} \pi^* Z_{\cC} = \tfrac{1}{2} \pi_s^* \imath_{\cJ}^* Z_{\cC} \in \mathscr{W}_{-1}\mathrm{CH}^2(\cX_s)
\end{equation}
remains regulator indecomposable, we look at the spectral sequence $E_0^{a,b} = \oplus_{|I|=a+1} Z^2 (Y_I , -b)_{\#}$ computing $\mathrm{CH}^2(\cX_s)$ ($d_0=\partial$, $d_1 = \delta$). Let $\left( \mathrm{Gr}^{\mathscr{W}}_{-1} \chc^2 (\cX_s)\right)_{\text{ind}}$ denote the quotient of 
$$
\mathrm{Gr}^{\mathscr{W}}_{-1}\chc^2(\cX_s) = \left\{ \ker(d_1)\cap\ker(d_2) \subset \tfrac{\oplus \mathrm{CH}^2(Y_{ij},1)}{\delta \left( \oplus \mathrm{CH}^2(Y_i,1)\right)} \right\}
$$
by the subspace of (equivalence classes of) decomposable cycles; further, $\mathscr{S}_3$ acts on $\cX_s$, and we let $(\cdots)^{\mathscr{S}_3}$ denote invariants.

\begin{lemma} \label{lem3.5}
We have isomorphisms
\begin{enumerate}[(a)]
\item $\left( \mathrm{Gr}^{\mathscr{W}}_{-1}\mathbf{CH}^2(\cX_s)\right)^{\mathscr{S}_3}_{\text{ind}} \cong \mathrm{CH}^2_{\text{ind}}(\widetilde{\cC_s}\times \widetilde{\cC_s} ,1)^{\mathscr{S}_2}$ 

\noindent and
\item $\left(\mathrm{Gr}^W_2  H^3 (\cX_s) \right)_{\text{tr}}^{\mathscr{S}_3} \cong H^2_{\text{tr}}(\widetilde{\cC_s}\times \widetilde{\cC_s})^{\mathscr{S}_2} \overset{\cong}{\underset{(\pi_s^{(1)})^*}{\leftarrow}} H^2_{\text{tr}}(\tilde{A}).$
\end{enumerate}
\end{lemma}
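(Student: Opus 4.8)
The plan is to reduce everything to the single building block $\widetilde{\cC_s}$ and its Jacobian, tracking the $\mathscr{S}_3$-action through the combinatorics of $\cX_s = \cup_{i=1}^8 Y_i$. First I would recall from $\S$\ref{S3.2} the precise description of the double complex $E_0^{a,b} = \oplus_{|I|=a+1}Z^2(Y_I,-b)_{\#}$: the relevant strata for $\mathscr{W}_{-1}$ are the double intersections $Y_{ij}$ and their boundary maps to the $Y_i$. The key point is that the diagonal $\tilde\Delta$ is supported in a very restricted part of $\cX_s$; using \eqref{eq3(!!)} and Proposition \ref{prop3.4}, its specialization lives in (a subquotient of) $\mathrm{CH}^2(Y_{ij},1)$ for those $Y_{ij}$ that map onto the "genus-2" part, and by the blow-up descriptions of the $Y_i$ each such $Y_{ij}$ is a blow-up of a product involving $\widetilde{\cC_s}\times\widetilde{\cC_s}$ (times a $\mathbb{P}^1$ or a point). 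The blow-up loci are along $\widetilde{\cC_s}\times\{\tilde q_i\}$ or $\{\tilde q_i\}\times\{\tilde q_j\}$ type centers, so by the projective bundle / blow-up formula for higher Chow groups the extra exceptional contributions to $\mathrm{CH}^2(\cdot,1)$ come from $\mathrm{CH}^1$ and $\mathrm{CH}^0$ of lower-dimensional strata, hence are \emph{decomposable}; they die in the indecomposable quotient. This is the computation that produces the isomorphism in (a).

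For part (a) concretely: after passing to the associated graded for $\mathscr{W}$ and to the $\mathscr{S}_3$-invariants, the differentials $\delta=d_1$ between the two relevant columns become the difference-of-pullbacks maps $\imath_0^*-\imath_\infty^*$ along the various $\mathbb{P}^1$-directions, exactly as in Example \ref{ex:AL}; by the projective bundle formula these are computed by restriction to sections, and the cokernel/kernel bookkeeping collapses $\oplus_{|I|=2}\mathrm{CH}^2(Y_{ij},1)_{\text{ind}}$ modulo $\delta(\oplus\mathrm{CH}^2(Y_i,1))$ down to a single copy of $\mathrm{CH}^2_{\text{ind}}(\widetilde{\cC_s}\times\widetilde{\cC_s},1)$. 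The residual symmetry is $\mathscr{S}_2$ (the stabilizer in $\mathscr{S}_3$ of the "two-curve-factors-plus-one-$\mathbb{P}^1$-factor" configuration), so the $\mathscr{S}_3$-invariants match the $\mathscr{S}_2$-invariants on the two-fold product; one has to check the map is well defined on $\ker(d_1)\cap\ker(d_2)$ and that the kernel/cokernel contributions from the other $Y_{ij}$ (those mapping into $\mathrm{sing}(\mathcal{A})$, or with purely toric image) are decomposable or cancel — again via the blow-up/projective-bundle splittings.

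Part (b) is the Hodge-theoretic shadow of (a) and is handled by the weight spectral sequence of the semistable degeneration: $\mathrm{Gr}^W_2 H^3(\cX_s)$ is computed from the $E_1$-page $\oplus_{|I|} H^{\bullet}(Y_I)$ with Gysin and restriction differentials. One identifies the "transcendental" part as the piece surviving after removing algebraic (Tate) classes — the blow-up formula $H^*(\mathrm{Bl}_Z W) = H^*(W)\oplus(\text{Tate twists of } H^*(Z))$ shows all exceptional contributions are algebraic, so only $H^2_{\text{tr}}(\widetilde{\cC_s}\times\widetilde{\cC_s})$ survives; taking $\mathscr{S}_3$-invariants gives the $\mathscr{S}_2$-invariants of that, and the last arrow $H^2_{\text{tr}}(\widetilde{\cC_s}\times\widetilde{\cC_s})^{\mathscr{S}_2}\xleftarrow{\cong}H^2_{\text{tr}}(\tilde A)$ is the standard fact that, for a curve, $(\pi_s^{(1)})^*=(\mathrm{AJ})^*$ identifies the transcendental $H^2$ of $\mathrm{Jac}$ with the $\mathscr{S}_2$-symmetric transcendental $H^2$ of the symmetric square, using $H^2_{\text{tr}}(\widetilde{\cC_s}\times\widetilde{\cC_s})^{\mathscr{S}_2}=\mathrm{Sym}^2 H^1(\widetilde{\cC_s})_{\text{tr}}$ versus $\wedge^2 H^1$ and the polarization. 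The main obstacle I anticipate is purely combinatorial: keeping the eight components, their intersection pattern, the three blow-up loci per component, and the $\mathscr{S}_3$-action all straight simultaneously, and verifying that every "junk" term in the spectral sequences really is decomposable (resp. algebraic) rather than contributing spuriously to the indecomposable (resp. transcendental) quotient — i.e. the bookkeeping that the two spectral sequences (motivic and Hodge) degenerate in the needed range and that the identifications in (a) and (b) are compatible.
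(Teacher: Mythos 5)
Your overall strategy (locate the indecomposables on the double loci $Y_{ij}$, discard blow-up contributions as decomposable via the projective bundle formula, then match $\mathscr{S}_3$- with $\mathscr{S}_2$-invariants) agrees with the paper's, but the proposal is missing the one mechanism that makes the bookkeeping you defer actually work. The paper first pins down that $\mathrm{CH}^2_{\text{ind}}(Y_{ij},1)$ vanishes except for $Y_{12},Y_{13},Y_{14}$, each of which has \emph{two} components (lying over $\tilde{q}_1,\tilde{q}_2$), and then decomposes cycles on $\widetilde{\cC_s}^{\times k}$ ($k=2,3$) into eigencomponents $Z^{\chi}$ for the characters of the group $\mathbb{Z}_2^{\times k}$ generated by the hyperelliptic involutions on the factors: if $\chi(\sigma_i)=-1$ the component restricts to zero on the fixed locus $\{\tilde{q}_1,\tilde{q}_2\}\times\widetilde{\cC_s}^{\times(k-1)}$ (up to permutation), while if $\chi(\sigma_i)=+1$ it is pulled back from $\mathbb{P}^1\times\widetilde{\cC_s}^{\times(k-1)}$ and hence contributes only decomposables. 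This is what simultaneously (i) controls the image of $\delta(\oplus\mathrm{CH}^2(Y_i,1))$ — in particular the restriction from $Y_1$ (the blow-up of $\widetilde{\cC_s}^{\times 3}$) to the two components of $Y_{1j}$, which is \emph{not} a difference of sections of a $\mathbb{P}^1$-bundle and can a priori hit indecomposables, so your appeal to ``exactly as in Example \ref{ex:AL}'' / the projective bundle formula does not cover it; the involution argument shows this image merely equates the cycles on the two components of each $Y_{1j}$, leaving three copies of $\mathrm{CH}^2_{\text{ind}}(\widetilde{\cC_s}\times\widetilde{\cC_s},1)$ permuted by $\mathscr{S}_3$; and (ii) shows the surviving $\chi_{12}$-part (anti-invariant under both involutions) restricts to zero on $\{\tilde{q}_j\}\times\widetilde{\cC_s}$ and $\widetilde{\cC_s}\times\{\tilde{q}_j\}$, so it automatically lies in $\ker(d_1)\cap\ker(d_2)$ — the very point you flag as ``the main obstacle'' but leave unverified.

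For part (b) your route (Mayer--Vietoris/weight spectral sequence of the SNC fiber, blow-up formula in cohomology, and the identification $H^2_{\text{tr}}(\tilde{A})\cong H^2_{\text{tr}}(\widetilde{\cC_s}\times\widetilde{\cC_s})^{\mathscr{S}_2}$ via $\wedge^2 H^1$ inside the K\"unneth component — note it is $\wedge^2$, not $\mathrm{Sym}^2$, once the Koszul sign of the factor swap is taken into account) is a reasonable alternative to the paper's terse ``same proof verbatim,'' but it faces the same issue: computing the $E_2$-term requires controlling the restriction maps from $H^2(Y_1)$ to the two components of each $Y_{1j}$, and the clean way to do this is again the character decomposition under the hyperelliptic involutions. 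So the proposal as written has a genuine gap: without the involution/eigenspace argument (or a substitute), neither the collapse of $\oplus\mathrm{CH}^2_{\text{ind}}(Y_{ij},1)/\delta(\cdots)$ to a single copy of $\mathrm{CH}^2_{\text{ind}}(\widetilde{\cC_s}\times\widetilde{\cC_s},1)^{\mathscr{S}_2}$ nor the membership in $\ker(d_1)\cap\ker(d_2)$ is established.
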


\begin{proof}
First note that $\mathrm{CH}^2_{\text{ind}}(Y_{ij},1)$ is zero for all but $Y_{12},Y_{13},Y_{14}$, each of which has two components (because of $\tilde{q}_1,\tilde{q}_2$). Moreover, we can ignore blowups, which only change the decomposable cycles (by the projective bundle formula). Looking at $\widetilde{\cC_s}^{\times k}$ ($k=2$ or $3$), there are hyperelliptic involutions $\sigma_i$ on the factors, with quotients $\mathscr{P}_i$ permutations of $\mathbb{P}^1 \times \cC_s^{\times (k-1)}$ and fixed points containing $\mathscr{Q}_i =$ a permutation of $\{ \tilde{q}_1,\tilde{q}_2\}\times \cC_s^{\times (k-1)}$.  We may of course decompose $\mathrm{CH}^a(\widetilde{\cC_s}^{\times k},b) = \sum_{\chi} \mathrm{CH}^a (\widetilde{\cC_s}^{\times k},b)^{\chi}$ according to the character thorugh which $\mathbb{Z}_2^{\times k}$ acts. In fact, writing
$$
Z=\sum_{\chi}\tfrac{1}{2^k}\sum_{\sigma\in \mathbb{Z}^k_2}\chi(\sigma)\sigma^* Z = \sum_{\chi} Z^{\chi} ,
$$
we can do this on the level of \emph{cycles}. If $\chi(\sigma_i) = -1$, then $Z^{\chi}$ pulls back to zero on $\mathscr{Q}_i$; while if $\chi(\sigma_i) = +1$, $Z^{\chi}$ is pulled back from $\mathscr{P}_i$. From this, one deduces that the image of $\delta$ merely equates cycles on each pair of components, leaving us with 3 copies of $\mathrm{CH}^2_{\text{ind}}(\widetilde{\cC_s}\times \widetilde{\cC_s},1) = \mathrm{CH}^2_{\text{ind}}(\widetilde{\cC_s}\times \widetilde{\cC_s},1)^{\chi_{12}}$. Here $\chi_{12}(\sigma_i)=-1$ ($i=1,2$), since pullbacks from $\widetilde{\cC_s}\times \mathbb{P}^1$ or $\mathbb{P}^1 \times \mathbb{P}^1$ are decomposable. Since this $\chi_{12}$-part restricts to zero on $\{\tilde{q}_j\}\times \widetilde{\cC_s}$ and $\widetilde{\cC_s}\times \{\tilde{q}_j\}$, it already lies in $\ker(d_1)\cap\ker(d_2)$. Taking $\mathscr{S}_3$-invariants gives (a). The same proof applies verbatim for (b).
\end{proof}

\begin{prop} \label{prop3.5}
The regulator of $\imath_{\cX}^* \tilde{\Delta}$ is nonzero in the Jacobian of Lemma \ref{lem3.5}(b) (which implies it is nonzero also in (a)).
\end{prop}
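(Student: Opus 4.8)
The plan is to deduce the claim from the regulator indecomposability of Collino's cycle $Z_{\cC}^{(1)}$ (the Proposition immediately preceding) by transporting it along the factorization $\imath_{\cX}^*\tilde\Delta = \tfrac12\,\pi_s^*\,\imath_{\cJ}^* Z_{\cC}$ of \eqref{eq3(!!)}, using functoriality of the regulator under the specialization $\imath^*$ and under the pullback $\pi_s^*$. First I would note that $\tilde\Delta=\tfrac12\pi^* Z_{\cC}$ is $\mathscr{S}_3$-invariant: the morphism $\pi\colon\cX\to\cJ$ factors through the symmetrization $\cX_\eta\to\mathrm{Sym}^3\cC_\eta$, so $\pi^* Z_{\cC}$, and hence $\imath_{\cX}^*\tilde\Delta$, is fixed by $\mathscr{S}_3$; consequently its regulator automatically lands in $J\bigl((\mathrm{Gr}^W_2 H^3(\cX_s))^{\mathscr{S}_3}_{\mathrm{tr}}\bigr)$, and it suffices to prove nonvanishing there.

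Next I would string together the following chain. By Proposition \ref{prop3.4}, $\imath_{\cJ}^* Z_{\cC}$ lies in $\mathscr{W}_{-1}\chc^2(\cJ_s)$ and is represented, up to the normalization $\rho^*\rho_*=4\cdot\mathrm{Id}$, by the class $Z_{\cC}^{(1)}\in\mathrm{CH}^2(A,1)$ of \eqref{eq3**}, whose regulator $\overline{\mathrm{AJ}}^{2,1}(Z_{\cC}^{(1)})$ is nonzero in $J(H^2_{\mathrm{tr}}(A,\mathbb{Q}(2)))$ (the preceding Proposition, i.e. Collino). Under the edge homomorphism of the two-column descent spectral sequence for $\cJ_s$ — which is exactly the ``$K$-theory elevator'', and whose compatibility with the Deligne regulator is the content of Theorem \ref{limThm} of $\S\ref{MS5}$ (here it can alternatively be checked at the level of currents exactly as in $\S\ref{S2}$) — the regulator of $\imath_{\cJ}^* Z_{\cC}$ equals a nonzero rational multiple of the image of $\overline{\mathrm{AJ}}^{2,1}(Z_{\cC}^{(1)})$ under $\rho^*\colon H^2_{\mathrm{tr}}(A)\hookrightarrow H^2_{\mathrm{tr}}(\tilde A)=\mathrm{Gr}^W_2 H^3(\cJ_s)_{\mathrm{tr}}$, which is injective because $\rho$ is an isogeny. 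Applying $\pi_s^*$, functoriality of the regulator together with Lemma \ref{lem3.5}(b) identifies the regulator of $\imath_{\cX}^*\tilde\Delta$ with $\tfrac12$ times the image of $\rho^*\overline{\mathrm{AJ}}^{2,1}(Z_{\cC}^{(1)})$ under the isomorphism $(\pi_s^{(1)})^*\colon H^2_{\mathrm{tr}}(\tilde A)\overset{\cong}{\to}(\mathrm{Gr}^W_2 H^3(\cX_s))^{\mathscr{S}_3}_{\mathrm{tr}}$ of Lemma \ref{lem3.5}(b). Since $(\pi_s^{(1)})^*\circ\rho^*$ is injective and $\overline{\mathrm{AJ}}^{2,1}(Z_{\cC}^{(1)})\neq 0$, the regulator of $\imath_{\cX}^*\tilde\Delta$ is nonzero. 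Nonvanishing in part (a) is then formal: a higher Chow class whose transcendental regulator is nonzero is a fortiori indecomposable, so the class of $\imath_{\cX}^*\tilde\Delta$ in $(\mathrm{Gr}^{\mathscr{W}}_{-1}\chc^2(\cX_s))^{\mathscr{S}_3}_{\mathrm{ind}}\cong\mathrm{CH}^2_{\mathrm{ind}}(\widetilde{\cC_s}\times\widetilde{\cC_s},1)^{\mathscr{S}_2}$ is nonzero.

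I expect the main obstacle to be not conceptual but the naturality bookkeeping. One has to verify (i) that the Deligne/$\mathrm{AJ}$ regulator of a class in $\mathscr{W}_{-1}\chc^2$ is genuinely computed by the spectral-sequence edge map into $\mathrm{CH}^2$ of the codimension-one stratum — this is where one invokes Theorem \ref{limThm}, or argues directly with currents as in $\S\ref{S2}$ — and (ii) that $\pi_s^*$ really does induce the asserted map of two-column spectral sequences, carrying the correct normalization constants coming from the degree-$2$ map $'\cJ\twoheadrightarrow\cJ$, the ad hoc push-forward $\rho_*$ (with its factor $2$ on $\cJ_s^1$), and $\rho^*\rho_*=4\cdot\mathrm{Id}$. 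Threading these so that the scalar multiplying $\overline{\mathrm{AJ}}^{2,1}(Z_{\cC}^{(1)})$ is visibly nonzero is the only real work; the vanishing-obstruction input (injectivity of $\rho^*$ and of $(\pi_s^{(1)})^*$) is immediate, and the nonvanishing input is entirely Collino's theorem via Proposition \ref{prop3.4} and the preceding Proposition.
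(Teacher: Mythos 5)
Your proposal is correct and follows essentially the same route as the paper: the paper's proof is exactly the commutative square intertwining $\overline{\mathrm{AJ}}$ with $(\pi_s^{(1)})^*$ (Lemma \ref{lem3.5}(b) supplying injectivity on the transcendental part), together with \eqref{eq3(!!)} and the nonvanishing of $\overline{\mathrm{AJ}}(\imath_{\cJ}^* Z_{\cC})$ coming from Proposition \ref{prop3.4} and Collino's theorem. Your extra bookkeeping (the $\mathscr{S}_3$-invariance, the $\rho$-normalization factors, and the current-level compatibility of the edge map with the regulator, which is built into the $\S$\ref{MS5}.1 double-complex definition of $\overline{\mathrm{AJ}}$ rather than Theorem \ref{limThm}) just makes explicit what the paper's diagram leaves implicit.
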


\begin{proof}
Follows at once from the commutative diagram
$$
\xymatrix{
\left( \mathrm{Gr}^{\mathscr{W}}_{-1} \chc^2 (\cX_s) \right)^{\mathscr{S}_3}_{\text{ind}} \ar [d]^{\overline{\mathrm{AJ}}} &&& \mathrm{CH}^2_{\text{ind}} (\tilde{A},1) \ar [d]^{\overline{\mathrm{AJ}}} \ar [lll]^{(\pi_s^{(1)})^*}
\\
J\left(\left(\mathrm{Gr}^W_2 H^3(\cX_s)\right)^{\mathscr{S}_3}_{\text{tr}}  (2) \right) & && J\left( H^2_{\text{tr}}(\tilde{A})(2) \right) \ar [lll]^{(\pi_s^{(1)})^*}
}
$$
and the fact that $\overline{\mathrm{AJ}} (\imath_{\cJ}^* Z_{\cC}) \neq 0$ on the right-hand side.
\end{proof}

\subsection{The normal function} \label{S3.6}

We assume that $\cC$ extends to a family $\cC^{\text{an}}$ over an analytic disk $\mathbb{D}$ (with $s$ its central point);\footnote{That is, $\cC^{\text{an}}\to\mathbb{D}$ resp. $\cC\to B$ are analytic resp. algebraic localizations of a family of genus 2 curves over a complex algebraic curve.} this is necessary in order to consider the normal functions associated to $Z_{\cC}^{\text{an}}$ and $\Delta^{\text{an}}$, which are sections of a family of nonalgebraic complex tori. We will drop the ``an'' in what follows.  Write $t$ for the coordinate on $\mathbb{D}$ (with $t(s)=0$).

Let $\cV$ denote the VHS over $\mathbb{D}^* = \mathbb{D}\setminus \{0\}$ associated to $\{ H^3(X_t)\}_{t\in \mathbb{D}^*}$, $\cV_{\text{alg}}$ the maximal level-one sub-VHS, and $\cV_{\text{tr}}$ the quotient.  Write $\cW_{\cdots}$ for the corresponding objects for $H^3(J(C_t))$, so that $\cW_{\text{tr}} \underset{\pi^*}{\hookrightarrow} \cV_{\text{tr}}$ with image the $\mathscr{S}_2$-invariants. Denote the normal functions by
$$
\nu_{Z_{\cC}} \in \mathrm{ANF}(\mathbb{D}^*,\cW(2))\text{ and }\nu_{\Delta},\nu_{\tilde{\Delta}} \in \mathrm{ANF}(\mathbb{D}^*,\cV(2))
$$
where $\nu_{\tilde{\Delta}} = \pi^* \nu_{Z_{\cC}}$. These are the sections of $J(\cW(2))$ resp. $J(\cV(2))$ obtained via fiberwise $\mathrm{AJ}$ of the cycles.  We write $\bar{\nu}$ for the projections to $\mathrm{ANF}(\mathbb{D}^*,\cW_{\text{tr}}^{(2)})$ resp. $\mathrm{ANF}(\mathbb{D}^*,\cV^{(2)}_{\text{tr}})$; these record fiberwise $\overline{\mathrm{AJ}}$ of the class of the cycles in the Griffiths group $\mathrm{Griff}^2(X_t)$ resp. $\mathrm{Griff}^2(J(\cC_t))$. But then since $\Delta_t \underset{\text{alg}}{\equiv} \tilde{\Delta}_t$, we have $\bar{\nu}_{\Delta} = \bar{\nu}_{\tilde{\Delta}}$.

Write $(\cdots)^N$ to denote $\ker(N)\subset (\cdots )$.  By \cite[Thm. II.B.9]{GGK}, we have a well-defined limit mapping
\begin{equation} \label{eq3(s)}
\mathrm{lim}_s: \mathrm{ANF}(\mathbb{D}^*,\cV(2)) \to J(\cV^N_{\text{lim}})(2)).
\end{equation}
Moreover, $\mathrm{lim}_s \nu_{\tilde{\Delta}}$ is given by $\mathfrak{r}^*\mathrm{AJ}(\imath_{\cX}^* \tilde{\Delta})$ in view of \cite[Thm. III.B.5]{GGK}, where $\mathfrak{r}^* : H^3(\cX_s)\to \cV_{\text{lim}}$ is the pullback via the Clemens retraction.  We need an extension of \eqref{eq3(s)} to $\cV_{\text{tr}}$. Consider the preimage $W_2^{\text{lim}} \mathrm{ANF}(\mathbb{D}^*,\cV(2))$ of $J\left( (W_2 \cV^N_{\text{lim}})(2) \right)$ under \eqref{eq3(s)}: its intersection $W_2^{\text{lim}} \mathrm{ANF}(\mathbb{D}^*,\cV_{\text{alg}}(2))$ has $\mathrm{lim}_s$ in $J\left( (W_2 \cV_{\text{alg,lim}}^N )(2) \right)$, and $W_2 \cV_{\text{alg,lim}}^N$ is of pure type $(1,1)$, hence dies in $(\mathrm{Gr}^W_2\cV_{\text{lim}}^N )_{\text{tr}}$.  So \eqref{eq3(s)} descends to a well-defined mapping
\begin{equation} \label{eq3(ss)}
\overline{\mathrm{lim}_s} : W_2^{\text{lim}} \mathrm{ANF} (\mathbb{D}^* ,\cV_{\text{tr}}(2)) \to J\left( (\mathrm{Gr}^W_2 \cV_{\text{lim}}^N )_{\text{tr}} (2) \right) .
\end{equation}
From \eqref{eq3(!!)} it is clear that $\nu_{\tilde{\Delta}}$ belongs to $W_2^{\text{lim}} \mathrm{ANF}(\mathbb{D}^* ,\cV(2))$ and so we may apply $\overline{\mathrm{lim}}_s$ to $\bar{\nu}_{\tilde{\Delta}}(=\bar{\nu}_{\Delta})$, to obtain
$$
\mathfrak{r}^* \overline{\mathrm{AJ}}(\imath^*_{\cX}\tilde{\Delta}) = \tfrac{1}{2} \mathfrak{r}^* (\pi_s^{(1)})^* \overline{\mathrm{AJ}}(\imath_{\cJ}^* Z_{\cC}) = \tfrac{1}{2} \mathfrak{r}^* (\pi^{(1)}_s)^* \mathrm{AJ} (Z_{\cC}^{(1)}).
$$
But $\mathrm{AJ}(Z_{\cC}^{(1)})\neq 0$ in the left-hand side of
\begin{equation} \label{eq412}
J\left(H^2_{\text{tr}}(A)(2)\right) \underset{(\pi^{(1)}_s)^*}{\hookrightarrow} J\left( \{ \mathrm{Gr}^W_2 H^3(\cX_s)\}_{\text{tr}}\right) \underset{\overline{\mathfrak{r}^*}}{\overset{\cong}{\rightarrow}} J\left( \{ \mathrm{Gr}^W_2 \cV^N_{\text{lim}}\}_{\text{tr}} (2) \right)
\end{equation}
and $\mathfrak{r}^*$ is an isomorphism on $W_2$. We conclude:

\begin{thm}\label{thm3.6}
Let $\bar{\nu}_{\Delta}$ be the section of $J(H^3_{\text{tr}}(X_t))$ over $\mathbb{D}^*$ associated to the Gross-Schoen cycle.  Then:
\begin{enumerate}[(i)]
\item $\overline{\mathrm{lim}_s} (\bar{\nu}_{\Delta})$ is nonzero, and given by $\mathrm{AJ}(Z_{\cC}^{(1)})$ via \eqref{eq412}, where $Z_{\cC}^{(1)} \in \mathrm{CH}^2_{\text{ind}}(A,1)$ is Collino's cycle; this implies that
\item $\bar{\nu}_{\Delta}$ is nonzero in $\mathrm{ANF}(\mathbb{D}^*,\cV_{\text{tr}}(2))$, and so
\item $\Delta$ is nontorsion in $\mathrm{Griff}^2(X_t)$ for very general $t$.
\end{enumerate}
\end{thm}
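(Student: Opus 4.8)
The plan is to assemble, in order, the ingredients developed in $\S\S$\ref{S3.4}--\ref{S3.6}; parts (ii) and (iii) will then be formal consequences of (i). For (i) I would first use $\bar\nu_\Delta = \bar\nu_{\tilde\Delta}$ (established in $\S$\ref{S3.6} from the algebraic equivalence $\Delta_t \equiv \tilde\Delta_t$ coming out of \eqref{eq3(2.5)}), so that it suffices to compute $\overline{\mathrm{lim}_s}(\bar\nu_{\tilde\Delta})$. Since $\imath_\cX^*\tilde\Delta \in \mathscr{W}_{-1}\mathrm{CH}^2(\cX_s)$ by \eqref{eq3(!!)}, the section $\nu_{\tilde\Delta}=\pi^*\nu_{Z_\cC}$ lies in $W_2^{\mathrm{lim}}\mathrm{ANF}(\mathbb{D}^*,\cV(2))$, so that $\overline{\mathrm{lim}_s}$ of \eqref{eq3(ss)} applies to $\bar\nu_{\tilde\Delta}$; by the extension of \cite[Thm. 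III.B.5]{GGK} to the transcendental quotient that was used to build \eqref{eq3(ss)}, the limit equals $\mathfrak{r}^*\overline{\mathrm{AJ}}(\imath_\cX^*\tilde\Delta)$. I would then rewrite this using \eqref{eq3(!!)}, the compatibility of $\overline{\mathrm{AJ}}$ with $\pi_s^*$, and Proposition \ref{prop3.4}, obtaining $\tfrac12\,\mathfrak{r}^*(\pi_s^{(1)})^*\mathrm{AJ}(Z_\cC^{(1)})$; tracing this through the injection-followed-by-isomorphism \eqref{eq412} is precisely the assertion ``$\mathrm{AJ}(Z_\cC^{(1)})$ via \eqref{eq412}''. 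Its nonvanishing is the regulator indecomposability of Collino's cycle, i.e. $\mathrm{AJ}(Z_\cC^{(1)})\neq 0$ in $J(H^2_{\mathrm{tr}}(A)(2))$ (the Proposition just above, via \cite{Collino}), equivalently Proposition \ref{prop3.5}.

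Part (ii) I would obtain by contraposition: were $\bar\nu_\Delta=0$ in $\mathrm{ANF}(\mathbb{D}^*,\cV_{\mathrm{tr}}(2))$, its image under $\overline{\mathrm{lim}_s}$ would vanish, contradicting (i). For (iii), recall $\bar\nu_\Delta$ is the holomorphic section of the bundle with fibers $J(H^3_{\mathrm{tr}}(X_t)(2))$ over $\mathbb{D}^*$ whose value at $t$ is $\overline{\mathrm{AJ}}$ applied to the class of $\Delta_t$ in $\mathrm{Griff}^2(X_t)_{\mathbb{Q}}$. By (ii) this section is not identically zero, so its zero locus is a proper analytic --- hence discrete --- subset of $\mathbb{D}^*$; as $\cC^{\mathrm{an}}$ is the analytic localization of a family over a complex algebraic curve, the corresponding normal function over that algebraic base is also not identically zero, whence $\overline{\mathrm{AJ}}([\Delta_t])\neq 0$, and therefore $[\Delta_t]\neq 0$ in $\mathrm{Griff}^2(X_t)_{\mathbb{Q}}$ --- that is, $\Delta$ is nontorsion in $\mathrm{Griff}^2(X_t)$ --- for all $t$ outside a countable union of proper subvarieties, in particular for very general $t$.

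The genuinely hard work lies upstream of this theorem: the intersection-theoretic identifications of Lemma \ref{lem3.5}/Proposition \ref{prop3.5} and the well-definedness of the transcendental descent $\overline{\mathrm{lim}_s}$, which rest on admissibility and the semistable degeneration analysis of \cite{GGK}. Within the proof of the theorem itself the one delicate point I would write out with care is that the chain $\overline{\mathrm{lim}_s}(\bar\nu_{\tilde\Delta})=\mathfrak{r}^*\overline{\mathrm{AJ}}(\imath_\cX^*\tilde\Delta)=\tfrac12\,\mathfrak{r}^*(\pi_s^{(1)})^*\mathrm{AJ}(Z_\cC^{(1)})$ is compatible with the identifications built into \eqref{eq412} --- every ingredient for this is already in place, so it is really a matter of bookkeeping.
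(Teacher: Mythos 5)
Your proposal is correct and follows essentially the same route as the paper: the paper's own argument in $\S$\ref{S3.6} is exactly the chain $\bar{\nu}_{\Delta}=\bar{\nu}_{\tilde{\Delta}}$, membership in $W_2^{\text{lim}}\mathrm{ANF}$ via \eqref{eq3(!!)}, application of $\overline{\mathrm{lim}_s}$ from \eqref{eq3(ss)} together with \cite[Thm. III.B.5]{GGK} to get $\mathfrak{r}^*\overline{\mathrm{AJ}}(\imath_{\cX}^*\tilde{\Delta})=\tfrac12\mathfrak{r}^*(\pi_s^{(1)})^*\mathrm{AJ}(Z_{\cC}^{(1)})$, nonvanishing from Collino's theorem via \eqref{eq412}, and then (ii) and (iii) as formal consequences. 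Your explicit spelling-out of the ``very general $t$'' step in (iii) is a harmless elaboration of what the paper leaves implicit.
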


We have thus used regulator indecomposability of the specialization of the modified diagonal to check its generic algebraic inequivalence to zero.

\subsection{Second and third specializations of $Z_{\cC}$ and $\Delta$}

By adding a second parameter, we can allow $\cC_s$ to acquire an additional node $q'$, with normalization an elliptic curve $\tilde{E}$. Suppose moreover that he preimages $\{\tilde{q}_1 ',\tilde{q}_2 '\}$ (of $q'$) and $\{\tilde{q}_1 ,\tilde{q}_2 \}$ (of $q$) on $\tilde{E}$ are such that we have the equalities $\tilde{q}_2 ' - \tilde{q}_1 ' \equiv \tilde{q}_2 - \tilde{q}_1 \equiv 2(\tilde{q}_1 ' -\tilde{q}_1 ) =: \varepsilon$ of two-torsion classes. Then $A$ semistably degenerates to $E\times \bar{\mathbb{G}}_m$, where $E:= \tilde{E} / \langle \varepsilon \rangle$, and $Z^{(1)}_{\cC}$ specializes (goes up) to a class $Z_{\cC}^{(2)} \in \mathrm{CH}^2(E,2)$ which may be described as follows. Let $f,g\in \mathbb{C}(\tilde{E})^*$ have divisors $(f)=2[\tilde{q}_2 ] - 2[\tilde{q}_1 ]$ and $(g)=2[\tilde{q}_2 ' ] - 2[\tilde{q}_1 ' ]$, and satisfy $f(\tilde{q}_i ')=1$, $g(\tilde{q}_i ) = 1$ ($i=1,2$). Then the graph of the symbol $\{f,g \}$ belongs to $\mathrm{CH}^2 (\tilde{E},2)$, and $Z_{\cC}^{(2)}$ is the projection to $E$ of $\{f,g\} - \{f^{-1},g^{-1}\} \equiv 2\{f,g\}$. Its regulator can be shown to be nontorsion as in \cite[$\S$7]{Collino}, or by identifying $\{f,g\}$ as an Eisenstein symbol \cite[Example 10.1]{DoranKerr}.

Degenerating once more, in such a way that our four 4-torsion points ``remain finite'', $Z_{\cC}^{(2)}$ goes up to a cycle $Z_{\cC}^{(3)}\in \mathrm{CH}^2(\mathbb{C},3)$ given parametrically by ($z\mapsto$)
$$
\left( z, -\left(\tfrac{1-z}{1+z}\right)^2 , - \left( \tfrac{z-\mathbf{i}}{z+\mathbf{i}}\right)^2 \right) - \left(  z^{-1}, -\left(\tfrac{1-z}{1+z}\right)^{-2} , - \left( \tfrac{z-\mathbf{i}}{z+\mathbf{i}}\right)^{-2}\right) ,
$$
with regulator $32\mathbf{i} G$ (cf. \eqref{eqG}). This can be directly computed (as in $\S$\ref{toy})\footnote{This is done in \cite[$\S$IV.D]{GGK}, but with a small error as regards branches of log (which produces an extraneous term).} or done using two different formulas in \cite{DoranKerr} (cf. Example 10.1, and ``$D_5$'' in $\S$6.3). One may view this as a simple proof that $Z_{\cC}$, $Z_{\cC}^{(1)}$, and $Z_{\cC}^{(2)}$ are all nontorsion.

Here is an easy implication for the cycle $\Delta$ and its associated normal function, if we consider instead a good model for the triple fiber-product of the \emph{trinodal} degeneration of $C$. We get a specialization map from $\mathrm{ANF}(\mathbb{D}^*,\mathcal{V}_{\text{tr}}(2))$ to $\mathbb{C}/\mathbb{Q}(2)$ (along the lines of \cite[(IV.D.3)ff]{GGK}), under which $\bar{\nu}_{\Delta}$ goes to $16\mathbf{i}G$. This corresponds to specializing $\tilde{\Delta}$ to the special fiber of the good model, which is a complicated configuration of rational threefolds, with $\mathrm{Gr}^W_0 H^3$ of rank one.

\section{\bf Limits of higher normal functions}\label{MS5}

In this section we extend Proposition 6.2 of \cite{DoranKerr} to
the non-semistable setting, and provide a proof, which is omitted
in \cite{DoranKerr} for even the (semistable) case presented there.  We have found it more natural to work with motivic cohomology notation here; the reader who finds Chow cohomology notation more convenient may replace $H_{\cM}^a(X,\Q(b))$ by $\chc^b(X,2b-a)_{\Q}$.  All cycle groups in this section are taken to have $\Q$-coefficients.

\subsection{The Abel-Jacobi map for motivic cohomology of a normal crossing divisor\label{sec1.1}}

Let $\cX\overset{\bar{\pi}}{\to}\cS$ be a proper, dominant morphism
of smooth varieties, with unique singular fiber $\bar{\pi}^{-1}(0)=X_{0}=\cup Y_{i}$,
and $\dim\cX=d$, $\dim\cS=1$. Assume first that $X_{0}$ is a SNCD, so as to be able to make the descent spectral sequence for $H_{\cM}$ and $H_{\cD}$ explicit. To this end, we shall
write $Y_{I}:=\cap_{i\in I}Y_{i}$, $Y^{[\ell]}=\amalg_{|I|=\ell+1}Y_{I}$,
$\jmath_{I,j}:Y_{I\cup\{j\}}\hookrightarrow Y_{I}$, $Y^{I}:=\cup_{j\notin I}Y_{I\cup\{j\}}\subset Y_{I}$,
and $\langle i\rangle_{I}$ for the position of $i$ in $I$.

Recall (from \cite{KL,GGK}) that there are double complexes

\begin{align*}
Z_{Y}^{\ell,k}(p) &:=\oplus_{|I|=\ell+1}Z_{\#}^{p}(Y_{I},-k)\mspace{50mu}\text{resp.}\\
K_{Y}^{\ell,k}(p) &:=B_{Y}^{\ell,k}(p)\oplus F^{p}D_{Y}^{\ell,k}(p)\oplus D_{Y}^{\ell,k-1}(p)\\
&:=\oplus_{|I|=\ell+1}\left\{ C_{\#}^{2p+k}(Y_{I};\QQ(p))\oplus F^{p}D_{\#}^{2p+k}(Y_{I})\oplus D_{\#}^{2p+k-1}(Y_{I})\right\} ,
\end{align*}
with $d_{0} = \del_{\cB}$ (Bloch differential) resp. $D$ (cone differential
$D(\alpha,\beta,\gamma):=(-d\alpha,-d\beta,d\gamma-\beta+\delta_{\alpha}$))
and $d_{1}=\del_{\fI}=\sum_{|I|=\ell+1}\sum_{j\notin I}(-1)^{\langle j\rangle_{I\cup\{j\}}}(\jmath_{I,j})^{*}$,
whose associated
 simple complexes compute motivic resp. Deligne cohomology:
  \begin{align*}
H_{\cM}^{2p-r}(X_{0},\QQ(p)) &=H^{-r}(Z_{Y}^{\bullet}(p),\ul{\ul{\del_\cB}}),\\
H_{\cD}^{2p-r}(X_{0},\QQ(p)) &=H^{-r}(K_{Y}^{\bullet}(p),\ul{\ul{D}}).
\end{align*}
Briefly, $D_{\#}^{\bullet}(Y_{I}):=\cN^{\bullet}\{Y^{I}\}(Y_{I})$
denotes normal currents of intersection type, $C_{\#}^{\bullet}(Y_{I};\QQ(p)):= \mathcal{I}^{\bullet}\{Y^{I}\}(Y_{I})\otimes_{\ZZ}\QQ(p)$
the locally integral currents contained therein, and $Z_{\#}^{p}(Y_{I},\bullet):=Z_{\RR}^{p}(Y_{I},\bullet)_{Y^{I}}\subset Z^{p}(Y_{I},\bullet)$
the quasi-isomorphic subcomplex of higher Chow precycles in (real)
good position with respect to $Y_{I}$.  (For background on currents of intersection type, the reader may consult the treatments in \cite[$\S$III.A]{GGK} and \cite[$\S$8]{KL}.)

The KLM formula \cite{KLM,KL}, which takes the form 
\[
\cW\mapsto(-2\pi\ay)^{p+k}\left((2\pi\ay)^{-k}T_{\cW},\Omega_{\cW},R_{\cW}\right),
\]
provides a morphism of double complexes $Z_{Y}^{\ell,k}(p)\to K_{Y}^{\ell,k}(p)$
which induces the Abel-Jacobi map%
\footnote{We concentrate on the $r>0$ case since $r=0$ has been treated in
\cite{GGK}.%
}
\[
\mathrm{AJ}_{X_{0}}^{p,r}:H_{\cM}^{2p-r}(X_{0},\QQ(p))\to H_{\cD}^{2p-r}(X_{0},\QQ(p))\underset{r>0}{\cong}J^{p,r}(X_{0}).
\]
Given 
\[
Z=\{Z^{[\ell]}\}_{\ell\geq 0}
=\{Z_{I}^{[\ell]}\}_{\ell\geq0,|I|=\ell+1}\in\ker(\ul{\ul{\del_{\cB}}})\subset
\oplus_{\ell\geq 0}Z_{Y}^{\ell-\ell-r}(p)=Z_{Y}^{-r}(p),
\]
there exist $\Xi^{[\ell]}\in F^{p}D_{Y}^{-r-1}(p)$, $\Gamma^{[\ell]}\in B_{Y}^{-r-1}(p)$
such that\begin{multline*}
\left\{ (-2\pi\ay)^{p-\ell}\left((2\pi\ay)^{\ell}T_{Z^{[\ell]}},\Omega_{Z^{[\ell]}},R_{Z^{[\ell]}}\right)\right\} _{\ell\geq0}
\\
-\,\uud\,\left\{ (-2\pi\ay)^{p-\ell}\left((2\pi\ay)^{\ell}\Gamma^{[\ell]},\Xi^{[\ell]},0\right)\right\} _{\ell\geq0}
\end{multline*}
\[
=\;\left\{ (-2\pi\ay)^{p-\ell}\left(0,0,R_{Z^{[\ell]}}+\Xi^{[\ell]}-(2\pi\ay)^{\ell}\delta_{\Gamma^{[\ell]}}\right)\right\} _{\ell\geq0}\in\ker(\uud)\subset D_{Y}^{-r-1}(p)
\]
yields a class $\widetilde{\mathrm{AJ}(Z)}\in H^{2p-r-1}(X_{0},\CC)$ projecting
to
\[
\mathrm{AJ}(Z)\,\in\, J^{p,r}(X_{0})\,=\,\frac{H^{2p-r-1}(X_{0},\CC)}{F^{p}H^{2p-r-1}(X_{0},\CC)+H^{2p-r-1}(X_{0},\QQ(p))}
\]
\begin{equation}\label{eqp2sharp}\cong \, \left. \left\{ F^{-p+1} H_{2p-r-1}(X_0,\CC)\right\}^{\vee} \right/ H^{2p-r-1}(X_0,\QQ(p)) .
\end{equation}Now consider the double complex 
\[
[F^{q}]\, D_{\ell,k}^{Y}(-p)\,:=\,\oplus_{|I|=\ell+1}[F^{d+q-\ell-1}]\, D^{2(d-p-\ell)-k-1}(Y_{I})  
\]
with $d_{0}=d$, $d_{1}=\text{Gy}:=2\pi\ay\sum_{|I|=\ell}(-1)^{\langle i\rangle_{I}}(\jmath_{I\backslash\{i\},i})_{*}$,
which computes homology:  

 \begin{equation}\label{eqp3*}H_{-r}\left( F^{-p+1} D_{\bullet}^Y (-p) \right) = F^{-p+1} H_{2p-r-1}(X_0,\CC) .
\end{equation} By \cite[Prop. III.A.13]{GGK}, \eqref{eqp3*} can be represented
by elements of the form 
\[
\omega=\{\omega^{[\ell]}\}_{\ell\geq0}\subset\oplus_{\ell\geq0}F^{d-p-\ell}A^{2(d-p)-\ell+r-1}(Y^{[\ell]})\langle\log(\cup_{|I|=\ell+1}Y^{I})\rangle,
\]
and then $(-2\pi\ay)^{r-p}\langle\widetilde{\mathrm{AJ}(Z)},\omega\rangle=$\begin{equation}\label{eqp3dagger}\sum_{\ell\geq 0}\left( \int_{Y^{[\ell]}} R_{Z^{[\ell]}} \wedge \omega^{[\ell]} - (2\pi \ay)^{r+\ell} \int_{\Gamma^{[\ell]}} \omega^{[\ell]} \right).
\end{equation}The integrals here converge by \cite[Lemma III.A.6]{GGK}. In the
event that
\[
Z=\{Z_{i}\}\in Z_{\#}^{p}(X_{0},r):=\ker(\ul{\ul{\del_{\cB}}})\cap Z_{Y}^{0,-r}(p),
\]
we can arrange to have $\Gamma^{[\ell]}=0$ $\forall\ell>0$ \cite[III.A.19]{GGK},
reducing \eqref{eqp3dagger} to\begin{equation}\label{eqp4ddagger}(-2\pi\ay)^{r-p} \langle \widetilde{\mathrm{AJ}(Z)} ,\omega \rangle = \sum_i \left( \int_{Y_i} R_{Z_i}\wedge \omega_i - (2\pi\ay)^r \int_{\Gamma_i} \omega_i \right) .
\end{equation}

\subsection{Limits of higher normal functions in the semistable setting\label{sec1.2}}

Turning to normal functions, we begin with the morphisms
\[
\cX^{*}:=\cX\backslash X_{0}\overset{\pi}{\to}\cS\backslash\{0\}=:\cS^{*}\overset{\jmath}{\hookrightarrow}\cS,
\]
and write $\VV=R^{2p-r-1}\pi_{*}\QQ(p)$, $\cV$ (resp. $\cV_{e}$)
for the corresponding weight-$(-r-1)$ VHS (resp. its canonical extension).
Below we will abuse notation by writing $\cV$ (resp. $\cV_{e}$)
also for its sheaf of sections $\VV\otimes\cO_{\Delta^{*}}$ (resp.
$\tilde{\VV}\otimes\cO_{\Delta}$) for a disk $\Delta\subset\cS$
about $\{0\}$. Denote the LMHS at $\{0\}$ by $\left(\tilde{\VV}_{0},F_{lim}^{\bullet},W_{\bullet}\right)=V_{lim}$,
with the monodromy logarithm $N=\log(T)$ and $\tilde{\VV}=e^{-\frac{\log(s)}{2\pi\ay}N}\VV$.  (A general reference for 
the canonical extension and degenerations of Hodge structure may be found in \cite[$\S$11.1]{PS}.)

By a higher normal function $\nu\in ANF_{\cS^{*}}^{r}(\cV)$, we shall
mean an admissible VMHS of the form\begin{equation}\label{eqp5*}0 \to\cV \to \cE_{\nu} \to \QQ_{\cS^*} (0) \to 0 \, ;
\end{equation}the action of $N$ extends to the underlying local system $\mathbb{E}_{\nu}$
(which yields $\tilde{\EE}_{\nu}$, $E_{\nu}^{lim}$, $\mathcal{E}_{\nu,e}$).
Write $\imath_0 :\{0\}\hookrightarrow \mathcal{S}$ and $\imath:X_0 \hookrightarrow \mathcal{X}$. Applying the composition 
\[
\mathrm{AVMHS}(\cS^{*})\overset{R\jmath_{*}}{\to}D^{b}\mathrm{MHM}(\cS)\overset{\imath_{0}^{*}}{\to}D^{b}\mathrm{MHS}
\]
of exact functors to $\cV$ yields (up to quasi-isomorphism) the complex
$\cK^{\bullet}:=\left\{ V_{0}^{lim}\overset{N}{\to}V_{0}^{lim}(-1)\right\} $.
Therefore, applying it to \eqref{eqp5*} yields a diagram \[
\xymatrix{
& 0
\\
& \text{Hom}_{\mathrm{MHS}}\left( \QQ(0),H^1 \cK^{\bullet} \right) \ar [u]
\\
\mathrm{ANF}_{\cS^*}^r (\cV) \ar [ur]^{\mathrm{sing}_0 \mspace{30mu}} \ar [r]_{\imath_0^* R\jmath_* \mspace{70mu}} & \text{Ext}^1_{D^b \mathrm{MHS}} \left( \QQ(0),\cK^{\bullet}\right) \ar [u]
\\
\ker (\mathrm{sing}_0) \ar @{^(->} [u] \ar [r]_{\mathrm{lim}_0 \mspace{70mu}} & \text{Ext}^1_{\mathrm{MHS}} \left( \QQ(0),H^0\cK^{\bullet}\right) \ar [u] \; .
\\
& 0 \ar[u]
}
\]
defining the invariants $\mathrm{sing}_{0}$ and $\mathrm{lim}_{0}$.
Of course, we may also view $\nu$ as a (horizontal, holomorphic)
section of the Jacobian bundle $\cJ(\cV)=\cV/(\cF^{0}\oplus\VV)$
over $\cS^{*}$, by taking the difference of lifts $\nu_{\QQ}(s)\in\EE_{\nu,s}$
resp. $\nu_{F}(s)\in F^{0}E_{\nu,s}$ of $1\in\QQ(0)$ in $V_{s}$.
In this context, admissibility means that we also have (for some disk%
\footnote{we will freely shrink this as needed without further comment%
} $\Delta\subset\cS$ about $0$):
\begin{lyxlist}{00.00.0000}
\item [{(a)}] a lift $\nu_{F}$ of $1\in\QQ_{\cS}(0)$ to $\cE_{\nu,e}$
with $\nu_{F}|_{\Delta^{*}}$ in $\cF^{0}\cE_{\nu}$; and
\item [{(b)}] a lift $\nu_{\QQ}$ of $1$ to $\tilde{\EE}_{\nu,0}$ satisfying
$N\nu_{\QQ}\in W_{-2}\tilde{\VV}_{0}$.
\end{lyxlist}
One then has 
\[
\mathrm{sing}_{0}(\nu)=[N\nu_{\QQ}]\in\text{Hom}_{\mathrm{MHS}}\left(\QQ(0),\frac{V_{0}^{lim}}{NV_{0}^{lim}}(-1)\right)\cong H^{1}(\Delta^{*},\VV)^{(0,0)}.
\]
If this vanishes, then $\nu_{\QQ}$ may be chosen in $\ker(N)$, so
that $\tilde{\nu}:=\nu_{\QQ}-\nu_{F}$ gives a well-defined section
of $\cV_{e}$ (over $\Delta$) with $\nabla(\nu_{\QQ}-\nu_{F})|_{\Delta^{*}}\in\Gamma(\Delta^{*},\cF^{-1}\cV)$
by horizontality. Using $\mathrm{Res}_{0}(\nabla)=-2\pi\ay N$, we
find that 
\[
\widetilde{\mathrm{lim}_{0}\nu}:=\tilde{\nu}(0)=\nu_{\QQ}-\nu_{F}(0)\in\ker\left\{ V_{0}^{lim}\overset{N}{\to}\frac{V_{0}^{lim}}{F^{-1}}\right\} =\ker(N)+F^{0}V_{0}^{lim}
\]
which projects to compute $\mathrm{lim}_{0}\nu\in\text{Ext}_{\mathrm{MHS}}^{1}(\QQ(0),\ker(N))$.

By \cite[III.B.7]{GGK}, a holomorphic section $\omega(s)\in\Gamma\left(\cS,(\cF^{1}\cV^{\vee})_{e}\right)$
of the canonical extension may be represented by a $d_{rel}$-closed
$C^{\infty}$ relative $\log\langle X_{0}\rangle$ $\left(2(d-p)+r-1\right)$-form
on $\cX_{\Delta}$, and we write $\mathrm{lim}_{0}\omega$ for its
restriction to $(\cF^{1}\cV^{\vee})_{e,0}$. Referring to the (dual)
portions \begin{equation*}
\to H^{2p-r-1}(X_{0})(p)\overset{\mathfrak{r}^{*}}{\to}H_{lim}^{2p-r-1}(X_{t})(p)
\overset{N}{\to}H_{lim}^{2p-r-1}(X_{t})(p-1)\to,
\end{equation*}and\begin{multline*}
\to H^{2(d-p)+r-1}_{lim}(X_{t})(d-p)\overset{N}{\to} H_{lim}^{2(d-p)+r-1}(X_{t})(d-p-1)\\ \overset{\mathfrak{r}_{*}}{\to}
H_{2p-r-1}(X_{0})(-p)\to
\end{multline*}of the Clemens-Schmid sequence, the pullbacks $\omega_{i}$ (and their
iterated residues $\omega_{I}$ on substrata) define a representative
(as described after \eqref{eqp3*}) of 
\[
\mathfrak{r}_{*}(\mathrm{lim}_{0}\omega)=:\omega(0)\in F^{-p+1}H_{2p-r-1}(X_{0},\CC).
\]
Note that $\left\langle \widetilde{\mathrm{lim}_{0}\nu},\mathrm{lim}_{0}\omega\right\rangle =\lim_{s\to0}\langle\tilde{\nu}(s),\omega(s)\rangle\in\CC.$

To construct a normal function with $\mathrm{sing}_{0}(\nu)=0$, let
$\mathfrak{z}\in\ker(\del\cB)\subset Z_{\#}^{p}(\cX,r)$ be a representative
of a class $\Xi\in \CH^{p}(\cX,r)$ meeting all $Y_{I}$ properly, and
define $\mathfrak{z}_{0}=\{Z_{i}\}\in Z_{\#}^{p}(X_{0},r)$ by $Z_{i}:=\mathfrak{z}\cdot Y_{i}$.
This represents $\imath^{*}\Xi\in H_{\cM}^{2p-r}(X_{0},\QQ(p))$,
where $\imath:X_{0}\hookrightarrow\cX$. In a neighborhood $\cX_{\Delta}:=\bar{\pi}^{-1}(\Delta)$
of $X_{0}$, $\mathfrak{z}$ (hence $T_{\mathfrak{z}}$) meets all
fibers properly, and (since $H^{2p-r}(\cX_{\Delta})\cong H^{2p-r}(X_{0})$)
we may choose an integral current $\tilde{\Gamma}$ on $\cX_{\Delta}$
with $\del\tilde{\Gamma}=T_{\mathfrak{z}}$ meeting the $Y_{i}$ and
all fibers properly. Clearly then $\tilde{R}_{\mathfrak{z}}:=R_{\mathfrak{z}}-(2\pi\ay)^{r}\delta_{\tilde{\Gamma}}$
is a closed current on $\cX_{\Delta}$, of intersection type with
respect to the $Y_{I}$. Setting $\Gamma_{i}:=\tilde{\Gamma}\cdot Y_{i}$,
we have by \eqref{eqp4ddagger} that the restriction of $\tilde{R}_{\mathfrak{z}}$
to the $Y_{i}$ computes a lift to $H^{2p-r-1}(X_{0},\CC)$ of $\mathrm{AJ}_{X_{0}}(\imath^{*}\Xi)$.
Moreover, over $\Delta^{*}$ the normal function $\nu(s)=\mathrm{AJ}_{X_{s}}(\Xi_{s})$
associated to $\Xi^{*}\in \CH^{p}(\cX^{*},r)$ is computed by the fiberwise
restrictions
\[
\left[\tilde{R}_{\mathfrak{z}}|_{X_{s}}\right]\in H^{2p-r-1}(X_{s},\CC)\twoheadrightarrow J^{p,r}(X_{s})\cong\frac{\left\{ F^{d-p}H^{2(d-p)+r-1}(X_{s},\CC)\right\} ^{\vee}}{\text{periods}}.
\]
Putting everything together, we have \begin{align*}
\langle \widetilde{\mathrm{lim}_0 \nu} , \mathrm{lim}_0 \omega \rangle &= \lim_{s \to 0} \int_{X_s}\tilde{R}_{\mathfrak{z}} |_{X_s} \wedge \omega(s) 
\\
&= \sum_i \int_{Y_i} \tilde{R}_{\mathfrak{z}}|_{Y_i} \wedge \omega_i
\\
&= \langle \widetilde{\mathrm{AJ}_{X_0} (\imath^* \Xi )} ,\omega(0)\rangle
\\
&= \langle \mathfrak{r}^* \widetilde{\mathrm{AJ}_{X_0} (\imath^* \Xi )},\mathrm{lim}_0 \omega \rangle \, .
\end{align*}The second equality is the crucial one; it comes about by noting that
$\tilde{R}_{\mathfrak{z}}\wedge\omega$ is of $X_{0}$-intersection
type, hence the $0$-current $\left(\bar{\pi}|_{\cX_{\Delta}}\right)_{*}\left(\tilde{R}_{\mathfrak{z}}\wedge\omega\right)$
is of $\{0\}$-intersection type. Since it is also holomorphic on
$\Delta^{*}$, it follows that it is holomorphic (hence continuous)
on $\Delta$. So we have proved that\begin{equation}\label{ssdThm}\mathrm{lim}_{{s \to}0}{\mathrm{AJ}}_{X_{s}}(\Xi_{s})=J(\mathfrak{r}^{*}){\mathrm{AJ}}_{X_{0}}(\imath^{*}\Xi),
\end{equation}for $\mathfrak{z}$ as above and $\cX_{\Delta}\to\Delta$ semistable.
\begin{rem}
It is the SSD case which most clearly exhibits the phenomenon of ``going
up in $K$-theory in the limit''. Recall from $\S$\ref{S3ex} that the semi-simplicial structure of $X_{0}$
gives rise to a ``weight'' filtration $\mathscr{W}_{\bullet}$ on $H_{\cM}^{2p-r}(X_{0},\QQ(p))$,
with $\mathscr{W}_{-b}$ consisting of the classes which admit a representative
in $\oplus_{\ell\geq b}Z_{Y}^{\ell,-\ell-r}(p)$, and $Gr_{-b}^{\mathscr{W}}$
a subquotient of $\CH^{p}(Y^{[b]},r+b)$. So the degree of $K$-theory
``goes up'' if $\imath^{*}\Xi\in\mathscr{W}_{-b}$ for $b>0$.
\end{rem}

\subsection{Limits in the general setting\label{sec1.3}}

To state the more general result, we now drop the SSD assumption on
$\bar{\pi}$, hence the assumption of unipotency of $\VV$ at $\{0\}$
(i.e. of $T$). One still has pullback and $\mathrm{AJ}$ maps
\[
\CH^{p}(\cX,r)\overset{\imath^{*}}{\to}H_{\cM}^{2p-r}(X_{0},\QQ(p))\overset{{\mathrm{AJ}}_{X_{0}}^{p,r}}{\to}J^{p,r}(X_{0}),
\]
where $J^{p,r}(X_{0}):=\text{Ext}_{{\rm MHS}}^{1}(\QQ(0),H^{2p-r-1}(X_{0},\QQ(p)))$.
Write $T=T_{ss}T_{un}$ for the Jordan decomposition, $\kappa$ for
the order of $T_{ss}$, $s$ for the coordinate on $\Delta$, and
$N:=\log T_{un}$. Note that $\ker(N)\,(=\ker(T^{\kappa}-I)\,)\supsetneq\ker(T-I)$,
unless $\kappa=1$. The portions of Clemens-Schmid
\[
\to H_{2d-2p+r+1}(X_{0})(-d)\overset{\imath^{*}\imath_{*}}{\to}H^{2p-r-1}(X_{0})\overset{\mathfrak{r}^{*}}{\to}H_{lim}^{2p-r-1}(X_{s})
\]
and
\[
H_{lim}^{2p-r-1}(X_{s})(-1)\overset{\mathfrak{r}_{*}}{\to}H_{2d-2p+r-1}(X_{0})(-d)\overset{\imath^{*}\imath_{*}}{\to}H^{2p-r+1}(X_{0})\to
\]
remain exact sequences of MHS, with $\mathrm{im}(\mathfrak{r}^{*})=H_{lim}^{2p-r-1}(X_{s}):=\ker(T-I)\subseteq H_{lim}^{2p-r-1}(X_{s})$.
(This is a sub-MHS although $T-I$ itself is not a morphism of MHS
from $H_{lim}^{2p-r-1}$ to $H_{lim}^{2p-r-1}(-1)$.) As above, we
write $\cV$ for the VHS and $\cJ(\cV)$ for the family of generalized
intermediate Jacobians.

Let $\sigma:\,\hat{\cS}^{*}\to\cS^{*}$ be a cyclic cover extending
the map $t\mapsto t^{\kappa}(=s)$ from $\Delta^{*}\to\Delta^{*}$,
with $\mu\in\text{Aut}(\hat{\cS}^{*}/\cS^{*})$ a generator, and $\hat{\cV}$
resp. $\hat{\VV}$ the (unipotent) pullback variation resp. local
system. We have the canonical extension $\cJ(\hat{\cV}_{e}):=\hat{\cV}_{e}/\{\hat{\cF}_{e}^{0}+\jmath_{*}\hat{\VV}\}$,
with fiber over $\{0\}$ $\cJ_{lim}^{p,r}:=\hat{\cV}_{e,0}/\{(\jmath_{*}\hat{\VV})_{0}+\hat{\cF}_{e,0}^{0}\}$,
and write 
\[
J(\mathfrak{r}^{*}):\, J^{p,r}(X_{0})\to J_{lim}^{p,r}
\]
for the map induced by $\mathfrak{r}$, with image $J_{inv}^{p,r}:=\text{Ext}_{\mathrm{MHS}}^{1}(\QQ(0),\ker(T-I))$.
Moreover, there is a diagram\[ 
\xymatrix{
X_0 \ar @{^(->} [d]^{\imath} & \hat{X}_0 ' \ar @{^(->} [d]^{\hat{\imath} '} \ar @{->>} [l]^{P_0} \ar @{->>} [r]_{Q_0} & \hat{X}_0 \ar @{^(->} [d]^{\hat{\imath}}
\\
\cX \ar [d]^{\pi} & \hat{\cX} ' \ar [d]^{\hat{\pi} '} \ar @{->>} [r]_{\bar{Q}} \ar @{->>} [l]^{\bar{P}} & \hat{\cX} \ar [d]^{\hat{\pi}}
\\
\cS & \hat{\cS} \ar @{->>} [l]^{\bar{\sigma}} \ar @{=} [r] & \hat{\cS} 
}
\]with $\hat{\cX}_{\Delta}:=\hat{\pi}^{-1}(\Delta)\to\Delta$ semistable,
$\hat{\cX}'$, $\hat{\cX}$ smooth, and $\hat{\cX}'\backslash\hat{\cX}_{0}'=\hat{\cX}\backslash\hat{X}_{0}=\hat{\cX}_{0}^{*}:=\cX^{*}\times_{\sigma}\hat{\cS}.$
(That is, $\bar{Q}$ restricts to the identity on $\hat{\cX}^{*}$;
write $P$ for the restriction of $\bar{P}$ to $\hat{\cX}^{*}\to\cX^{*}$.)
Note that we have $H_{lim}^{*}(\hat{X}_{t})\cong H_{lim}^{*}(X_{s})$.
The natural map
\[
J(\hat{\mathfrak{r}}^{*}):\, J^{p,r}(\hat{X}_{0})\to J_{lim}^{p,r}
\]
has image $\hat{J}_{inv}^{p,r}:=\text{Ext}_{\mathrm{MHS}}^{1}(\QQ(0),\ker(T^{\kappa}-I))$.

By definition of admissibility, we have a pullback map\begin{align*}
\sigma^* : \, & \text{ANF}_{\cS^*}(\cV)  \to  \text{ANF}_{\hat{\cS}^*}(\hat{\cV}) \\ & \mspace{60mu} \nu \; \longmapsto \; \hat{\nu}\;\; ,
\end{align*}and if $\text{sing}_{0}(\nu):=\text{sing}_{0}(\hat{\nu})=0$, we define
$\text{lim}_{0}\nu:=\text{lim}_{0}\hat{\nu}\in\hat{J}_{inv}^{p,r}$.
The following result extends Proposition 6.2 of \cite{DoranKerr}:
\begin{thm}
\label{limThm}Let $\Xi^{*}\in \CH^{p}(\cX^{*},r)$ $(r>0)$ be given,
with 
\[
cl^{p,r}(\Xi^{*})\in\mathrm{Hom}_{\mathrm{MHS}}(\QQ(0),H^{2p-r}(\cX^{*},\QQ(p)))
\]
and 
\[
\nu_{\Xi^{*}}(s):=\mathrm{AJ}_{X_{s}}(\Xi_{s})\in\mathrm{ANF}_{\cS^{*}}(\cV),
\]
where $\Xi_{s}:=\imath_{X_{s}}^{*}(\Xi^{*})$.

(a) Suppose 
\[
Res_{X_{0}}\left(cl^{p,r}(\Xi^{*})\right)=0\in\mathrm{Hom}_{\mathrm{MHS}}\left(\QQ(0),H_{2(d-p)+r-1}(X_{0},\QQ(-d))\right).
\]
Then $\mathrm{sing}_{0}(\nu_{\Xi^{*}})=0$, and $\mathrm{lim}_{0}(\nu_{\Xi^{*}})$
lies in $J_{inv}^{p,r}$.

(b) If $\Xi^{*}$ is the restriction of $\Xi\in \CH^{p}(\cX,r)$, then
we have 
\[
\mathrm{lim}_{0}(\nu)=J(\mathfrak{r}^{*})\left(\mathrm{AJ}_{X_{0}}(\imath^{*}\Xi)\right).
\]
\end{thm}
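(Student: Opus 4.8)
The plan is to reduce part (b) to the semistable identity \eqref{ssdThm} of $\S$\ref{sec1.2} by pulling everything back along the cyclic cover $\sigma:\hat{\cS}^{*}\to\cS^{*}$ and tracking the cycle and its regulator through the diagram of $\S$\ref{sec1.3}; part (a) comes out of the localization/Clemens--Schmid formalism along the way. \emph{For (a)}, I would first identify $\mathrm{sing}_{0}(\nu_{\Xi^{*}})$ with (the image of) $Res_{X_{0}}(cl^{p,r}(\Xi^{*}))$: the topological invariant $[N\nu_{\QQ}]\in\mathrm{Hom}_{\mathrm{MHS}}(\QQ(0),\tfrac{V_{0}^{lim}}{NV_{0}^{lim}}(-1))$ of the normal function of a cycle is, by the standard localization description (cf.\ \cite{GGK}), the image of $cl^{p,r}(\Xi^{*})$ under the composite of the localization boundary $H^{2p-r}(\cX^{*},\QQ(p))\to H^{2p-r+1}_{X_{0}}(\cX,\QQ(p))$ with the purity isomorphism $H^{2p-r+1}_{X_{0}}(\cX,\QQ(p))\cong H_{2(d-p)+r-1}(X_{0},\QQ(-d))$; this composite is exactly $Res_{X_{0}}$, so the hypothesis forces $\mathrm{sing}_{0}(\nu_{\Xi^{*}})=0$ and $\lim_{0}(\nu_{\Xi^{*}})$ is defined. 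That the value lies in $J_{inv}^{p,r}$ (and not merely in the larger $\hat J_{inv}^{p,r}$) I would deduce from the fact that $\nu_{\Xi^{*}}$ is defined over $\cS^{*}$: since $\sigma\circ\mu=\sigma$, $\hat\nu:=\sigma^{*}\nu_{\Xi^{*}}$ is $\mu$-invariant, hence $\lim_{0}\hat\nu$ is fixed by the action induced by $\mu$ on $\hat J_{lim}^{p,r}$; as this action is the one induced by $T_{ss}$ and $\ker(T^{\kappa}-I)=\ker(N)$, one has $(\ker(T^{\kappa}-I))^{\mu}=\ker(N)\cap\ker(T_{ss}-I)=\ker(T-I)$, so $\lim_{0}\hat\nu\in\mathrm{Ext}^{1}_{\mathrm{MHS}}(\QQ(0),\ker(T-I))=J_{inv}^{p,r}$.

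\emph{For (b)}, by definition $\lim_{0}(\nu)=\lim_{0}(\hat\nu)$ with $\hat\nu=\sigma^{*}\nu$. I would set $\hat\Xi:=\bar Q_{*}\bar P^{*}\Xi\in\CH^{p}(\hat\cX,r)$; since $\bar Q$ is an isomorphism over $\hat\cX^{*}=\hat\cX'\setminus\hat X_{0}'=\hat\cX\setminus\hat X_{0}$, over $t\neq 0$ one has $\hat\Xi|_{\hat X_{t}}=P_{t}^{*}(\Xi|_{X_{t^{\kappa}}})$ under the isomorphism $\hat X_{t}\overset{\sim}{\to}X_{t^{\kappa}}$, so $\hat\nu$ is the normal function associated to $\hat\Xi|_{\hat\cX^{*}}$. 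After replacing $\hat\Xi$ by a rationally equivalent representative in good position with respect to all strata $\hat Y_{I}$ of $\hat X_{0}$ and all fibers (moving lemma for higher cycles with the ``$\#$'' conditions of the introduction), \eqref{ssdThm} applied to the semistable family $\hat\cX_{\Delta}\to\Delta$ gives $\lim_{0}(\hat\nu)=J(\hat{\mathfrak r}^{*})\big(\mathrm{AJ}_{\hat X_{0}}(\hat\imath^{*}\hat\Xi)\big)$.

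Next I would transport the regulator back to $X_{0}$. Using the base-change compatibilities of specialization --- for the proper morphism $\bar Q$ (with $\bar Q^{-1}(\hat X_{0})=\hat X_{0}'$), $\hat\imath^{*}\circ\bar Q_{*}=Q_{0,*}\circ\hat\imath'^{*}$, and for the morphism $\bar P$, $\hat\imath'^{*}\circ\bar P^{*}=P_{0}^{*}\circ\imath^{*}$, which are the motivic-cohomology analogues of Proposition \ref{Sp}(2),(3) together with the functoriality of $\chc^{\bullet}$ recalled after Remark \ref{rmk:Hanamura} --- one gets $\hat\imath^{*}\hat\Xi=Q_{0,*}P_{0}^{*}(\imath^{*}\Xi)$ in $H^{2p-r}_{\cM}(\hat X_{0},\QQ(p))$. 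By naturality of the KLM/Abel--Jacobi map under proper pushforward and pullback, $\mathrm{AJ}_{\hat X_{0}}(\hat\imath^{*}\hat\Xi)=Q_{0,*}P_{0}^{*}\,\mathrm{AJ}_{X_{0}}(\imath^{*}\Xi)$ in $J^{p,r}(\hat X_{0})$. Part (b) then follows once one establishes the compatibility of Clemens retractions
\[
J(\hat{\mathfrak r}^{*})\circ Q_{0,*}\circ P_{0}^{*}\;=\;J(\mathfrak r^{*})\colon\; J^{p,r}(X_{0})\to J_{lim}^{p,r}.
\]

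The hard part is precisely this last identity. One must choose Clemens retractions $\mathfrak r$ for $\cX_{\Delta}\to\Delta$ and $\hat{\mathfrak r}$ for the semistable $\hat\cX_{\Delta}\to\Delta$ compatible with the diagram of $\S$\ref{sec1.3}: obtain an intermediate retraction $\hat{\mathfrak r}'$ for $\hat\cX'_{\Delta}\to\Delta$ by pulling $\mathfrak r$ back along the finite base change $\bar\sigma$ (so that $\mathfrak r\circ\bar P\simeq P_{0}\circ\hat{\mathfrak r}'$), and obtain $\hat{\mathfrak r}$ by descending $\hat{\mathfrak r}'$ along the modification $\bar Q$ (so that $\hat{\mathfrak r}\circ\bar Q\simeq Q_{0}\circ\hat{\mathfrak r}'$). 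With such retractions in hand, passing to the induced maps on (limiting) cohomology, using $H^{*}_{lim}(\hat X_{t})\cong H^{*}_{lim}(X_{s})$ and the functoriality of the Clemens--Schmid sequences with respect to $\bar P,\bar Q$ (cf.\ \cite[$\S$III.A--B]{GGK}), yields $\hat{\mathfrak r}^{*}\circ Q_{0,*}\circ P_{0}^{*}=\mathfrak r^{*}$ on $H^{2p-r-1}$, hence the displayed identity of $J(\,\cdot\,)$-maps and thereby (b). The two delicate points will be the construction of the retraction-compatible modifications (topological, but requiring care along the ramification locus over $0$) and the verification that $\bar Q_{*}$ on $H^{2p-r-1}$ of the normal-crossing fibers is compatible with the Gysin/retraction data; everything else reduces to functoriality statements already recorded in $\S$\ref{S3.0} and $\S$\ref{MS5}.
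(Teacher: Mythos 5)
Your part (a) is essentially the paper's argument (identify $\mathrm{sing}_0$ with the residue via the localization diagram, then use $\mu$-invariance of $\hat{\nu}=\sigma^*\nu$ and the fact that $T$ acts on $\hat{J}^{p,r}_{inv}$ with fixed locus $J^{p,r}_{inv}$), and your setup for (b) --- $\hat{\Xi}:=\bar{Q}_*\bar{P}^*\Xi$ and the application of \eqref{ssdThm} to the semistable $\hat{\cX}_{\Delta}$ --- also matches. But the middle of your (b) has a genuine gap. You transport the class by asserting a base-change identity $\hat{\imath}^*\circ\bar{Q}_*=Q_{0,*}\circ(\hat{\imath}')^*$ in motivic cohomology and then ``$\mathrm{AJ}_{\hat{X}_0}(\hat{\imath}^*\hat{\Xi})=Q_{0,*}P_0^*\,\mathrm{AJ}_{X_0}(\imath^*\Xi)$ in $J^{p,r}(\hat{X}_0)$''. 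This presupposes a proper pushforward $Q_{0,*}$ on $\chc^p(\hat{X}_0',r)\to\chc^p(\hat{X}_0,r)$ and on $H^{2p-r-1}$ (equivalently on $J^{p,r}$) of the \emph{singular} special fibers; neither is available in the generality needed (the covariance of Chow cohomology recorded in $\S$\ref{S3.0} requires a smooth target or flatness, and Gysin pushforward on Betti cohomology of singular projective varieties is not defined --- $\hat{X}_0$ is an NCD and $Q_0$ is neither flat nor a map to a smooth variety, and $\hat{X}_0'$ may even carry multiplicities). The same problem infects your ``hard part'': the identity $J(\hat{\mathfrak{r}}^*)\circ Q_{0,*}\circ P_0^*=J(\mathfrak{r}^*)$ is not just a matter of choosing compatible retractions, since with $\mathfrak{r}=P_0\circ\hat{\mathfrak{r}}'$ and $\hat{\mathfrak{r}}=Q_0\circ\hat{\mathfrak{r}}'$ it amounts to $(\hat{\mathfrak{r}}')^*\left(Q_0^*Q_{0,*}-\mathrm{id}\right)P_0^*=0$, and $Q_0^*Q_{0,*}\neq\mathrm{id}$ in general; controlling that discrepancy is exactly the content that still needs a proof, and retraction-compatibility alone does not supply it.

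The way around all of this (and the paper's route) is to never push forward on the special fibers at all: compare $\hat{\Xi}':=\bar{P}^*\Xi$ with $\hat{\Xi}'':=\bar{Q}^*\hat{\Xi}=\bar{Q}^*\bar{Q}_*\hat{\Xi}'$ upstairs on the \emph{smooth} total space $\hat{\cX}'$, where pushforward and pullback are unproblematic. Since $\bar{Q}$ is an isomorphism off the special fibers, $\hat{\Xi}'-\hat{\Xi}''$ restricts to zero on $\hat{\cX}^*$, hence by localization equals $\hat{\imath}'_*\xi_0$ for some $\xi_0\in\CH^{p-1}(\hat{X}_0',r)$. Restricting to $\hat{X}_0'$ and using only \emph{pullback} functoriality of $\mathrm{AJ}$ gives $J(P_0^*)\left(\mathrm{AJ}_{X_0}(\imath^*\Xi)\right)=J(Q_0^*)\left(\mathrm{AJ}_{\hat{X}_0}(\hat{\imath}^*\hat{\Xi})\right)+J\left((\hat{\imath}')^*\hat{\imath}'_*\right)\mathrm{AJ}^{\hat{X}_0'}(\xi_0)$; applying $J((\hat{\mathfrak{r}}')^*)$ kills the error term because $(\hat{\mathfrak{r}}')^*\circ(\hat{\imath}')^*\hat{\imath}'_*=0$ by Clemens--Schmid exactness, and then $P_0\circ\hat{\mathfrak{r}}'=\mathfrak{r}$, $Q_0\circ\hat{\mathfrak{r}}'=\hat{\mathfrak{r}}$ together with \eqref{ssdThm} give $J(\mathfrak{r}^*)\left(\mathrm{AJ}_{X_0}(\imath^*\Xi)\right)=J(\hat{\mathfrak{r}}^*)\left(\mathrm{AJ}_{\hat{X}_0}(\hat{\imath}^*\hat{\Xi})\right)=\mathrm{lim}_0(\nu_{\Xi^*})$. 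In short: replace your base-change/pushforward step by the localization argument on $\hat{\cX}'$ plus the Clemens--Schmid vanishing; as written, your proof of (b) is incomplete.
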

\begin{proof}
$(a)$ Set $\hat{\Xi}^{*}:=P^{*}(\Xi^{*})$. The assumption implies
that $cl^{p,r}(\Xi^{*})$ lifts to 
\[
\xi\in\text{Hom}_{\mathrm{MHS}}(\QQ(0),H^{2p-r}(\cX,\QQ(p))),
\]
and then $cl^{p,r}(\hat{\Xi}^{*})$ lifts to $\bar{Q}_{*}\bar{P}^{*}\xi$,
hence has trivial $Res_{\hat{X}_{0}}$. It follows at once that $(\text{sing}_{0}(\nu_{\Xi^{*}})=)\,\text{sing}_{0}(\nu_{\hat{\Xi}^{*}})=0$, in view of the diagram\[
\xymatrix{
\mathrm{ANF}_{\hat{\cS}^*}(\hat{\cV}) \ar @/^2pc/ [rr]^{\text{sing}_0} \ar [r]^{[\cdot ]} & H^1(\hat{\cS}^* , \hat{\VV}) \ar [r]^{|_{\Delta}} \ar [d] & H^1(\Delta^* ,\hat{\VV}) \ar @{^(->} [rd] \ar [d]
\\
\CH^p(\hat{\cX}^*,r) \ar [u] \ar [r]^{cl^{p,r}} & H^{2p-r}(\hat{\cX}^*) \ar [r]^{|_{\Delta}} & H^{2p-r}(\hat{\cX}^*_{\Delta}) \ar [r]^{Res} & H^{2p-r+1}_{\hat{X}_0}(\hat{\cX}) \, .
}
\]Using admissibility, $\nu_{\hat{\Xi}^{*}}$ lifts to a section of
$\cJ(\hat{\cV}_{e})$ with value $\text{lim}_{0}(\nu_{\hat{\Xi}^{*}})\in\hat{J}_{inv}^{p,r}$
at $0$.

Now $\mu$ lifts to $\cM\in\text{Aut}(\hat{\cX}^{*}/\cX^{*})$, which
evidently acts on $(\jmath_{*}\hat{\VV})_{0}$ as an automorphism
of MHS. That is, the \emph{restriction} of $T$ to $\ker(T^{\kappa}-I)\subset H_{lim}^{2p-r-1}$
is MHS-compatible, and so $T$ acts on $\hat{J}_{inv}^{p,r}$ with
fixed locus $J_{inv}^{p,r}$. Since $\nu_{\hat{\Xi}^{*}}=\sigma^{*}\nu_{\Xi^{*}}$,
we have $\nu_{\hat{\Xi}^{*}}=\mu^{*}\nu_{\hat{\Xi}^{*}}$ and taking
$\text{lim}_{0}$ on both fibers gives $\text{lim}_{0}(\nu_{\hat{\Xi}^{*}})=T\,\text{lim}_{0}(\nu_{\hat{\Xi}^{*}})$.

$(b)$ Write $\hat{\Xi}':=\bar{P}^{*}\Xi$, $\hat{\Xi}:=\bar{Q}_{*}\hat{\Xi}'$,
$\hat{\Xi}'':=\bar{Q}^{*}\hat{\Xi}$, $\Xi_{0}:=\imath^{*}\Xi$, $\hat{\Xi}_{0}':=(\hat{\imath}')^{*}\hat{\Xi}'$,
etc.; note that $P_{0}^{*}\Xi_{0}=\hat{\Xi}_{0}'$, $Q_{0}^{*}\hat{\Xi}_{0}=\hat{\Xi}_{0}''$,
and $\hat{\Xi}'-\hat{\Xi}''=\hat{\imath}_{*}'\xi_{0}$ for some $\xi_{0}\in \CH^{p-1}(\hat{X}_{0}',r)$.
We have the motivic homology $\mathrm{AJ}$ map $\mathrm{AJ}^{\hat{X}_{0}'}:\, \CH^{p-1}(\hat{X}_{0}',r)\to\text{Hom}_{\mathrm{MHS}}(\QQ(0),H_{2(d-p)+r+1}(X_{0},\QQ(-d)))$,
and using functoriality of $\mathrm{AJ}$\begin{align*}
J(P_0^*)\left(\mathrm{AJ}_{X_0}(\Xi_0)\right) &=\mathrm{AJ}_{\hat{X}_0}(\hat{\Xi}_0 ')
\\
&= \mathrm{AJ}_{\hat{X}_0}(\hat{\Xi}_0 '') + \mathrm{AJ}_{\hat{X}_0} \left( (\hat{\imath} ')^* \hat{\imath}_* ' \xi_0 \right) 
\\
&= J(Q_0^*)\left( \mathrm{AJ}_{\hat{X}_0} (\hat{\Xi}_0)\right) + J\left( (\hat{\imath}')^* \hat{\imath}_* ' \right) \mathrm{AJ}^{\hat{X}_0 '}(\xi_0).
\end{align*}Since $(\hat{\mathfrak{r}}')^{*}\circ(\hat{\imath}')^{*}\hat{\imath}_{*}'=0$,
$P_{0}\circ\hat{\mathfrak{r}}'=\mathfrak{r}$, and $Q_{0}\circ\hat{\mathfrak{r}}'=\hat{\mathfrak{r}}$,
applying $J((\hat{\mathfrak{r}}')^{*})$ and using \eqref{ssdThm}
gives
\[
J(\mathfrak{r}^{*})\left(\mathrm{AJ}_{X_{0}}(\Xi_{0})\right)=J(\hat{\mathfrak{r}}^{*})\left(\mathrm{AJ}_{\hat{X}_{0}}(\hat{\Xi}_{0})\right)=\text{lim}_{0}(\nu_{\hat{\Xi}^{*}})=\text{lim}_{0}(\nu_{\Xi^{*}}).
\]

\end{proof}

\begin{rmk}
A similar result holds for $r=0$; details are left to the reader.
\end{rmk}

\subsection{Limits of truncated normal functions\label{sec1.4}}

Continuing in the setting of \S\ref{sec1.3}, recall that the fiber
over $\{0\}$ of the canonical extension $(\cV^{\vee})_{e}$ decomposes
as a direct sum of generalized eigenspaces ${\bf E}_{\lambda}$ for
$Res_{s=0}(\nabla)$, with eigenvalues in $[0,1)$. The natural morphism
$\bar{\sigma}^{*}(\cV^{\vee})_{e}\overset{\rho}{\to}\hat{\cV}_{e}^{\vee}$
has kernel the skyscraper sheaf $\oplus_{\lambda\in(0,1)}{\bf E}_{\lambda}$
over $\{0\}$. We may use the composition\begin{multline*}
\Gamma\left( \Delta , (\cV^{\vee})_e \right) \overset{\mathrm{lim}_0}{\to} (\cV^{\vee})_{e,0} \overset{\rho|_0}{\to} \hat{\cV}^{\vee}_{e,0} \cong H_{lim}^{2(d-p)+r-1}(\hat{X}_t)(d-p-1)
\\
\overset{\hat{\mathfrak{r}}_* '}{\to} H_{2p-r-1}(\hat{X}_0 ')(-p) \overset{(P_0)_*}{\to} H_{2p-r-1}(X_0)(-p)
\end{multline*}to define $\omega(0)\in H_{2p-r-1}(X_{0},\CC)$ by
\[
\omega(s)\mapsto\mathrm{lim}_{0}\omega\mapsto\mathrm{lim}_{0}(\bar{\sigma}^{*}\omega)\mapsto(\bar{\sigma}^{*}\omega)(0)\mapsto=:\omega(0).
\]
Note that a section of $\cF^{1}(\cV^{\vee})_{e}$ lands in $F^{-p+1}H_{2p-r-1}(X_{0},\CC)$.
From Theorem \ref{limThm}(b) we have at once the
\begin{cor}
Given $\omega(s)\in\Gamma(\Delta,\cF^{1}(\cV^{\vee})_{e})$ and $\Xi\in \CH^{p}(\cX,r)$,
there exist lifts $\tilde{\nu}$ of $\nu_{\Xi^{*}}$ to $\cV_{e}$
that make $F_{\Xi,\omega}(s):=\langle\tilde{\nu}(s),\omega(s)\rangle$
holomorphic and single-valued on $\Delta$. For any such lift, we
have\begin{equation}\label{limTNF}\lim_{s\to 0} F_{\Xi,\omega}(s)\equiv \langle \mathrm{AJ}_{X_0}(\imath_{X_0}^*\Xi),\omega(0)\rangle
\end{equation}modulo periods of $\omega(0)$ over $H^{2p-r-1}(X_{0},\QQ(p))$.
\end{cor}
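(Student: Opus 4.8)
The plan is to read the statement off of Theorem~\ref{limThm}, the pairing identity recorded at the end of $\S$\ref{sec1.2}, and the construction of $\omega(0)$ in $\S$\ref{sec1.4}; essentially nothing beyond those three ingredients is needed, and the work lies in assembling them correctly.

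First I would check that the hypotheses of Theorem~\ref{limThm} are in force. Since $\Xi^{*}$ is the restriction of $\Xi\in\CH^{p}(\cX,r)$, the class $cl^{p,r}(\Xi^{*})$ is the restriction of $cl^{p,r}(\Xi)\in\mathrm{Hom}_{\mathrm{MHS}}(\QQ(0),H^{2p-r}(\cX,\QQ(p)))$, so $Res_{X_{0}}(cl^{p,r}(\Xi^{*}))=0$ automatically. Theorem~\ref{limThm}(a) then gives $\mathrm{sing}_{0}(\nu_{\Xi^{*}})=0$ and places $\mathrm{lim}_{0}(\nu_{\Xi^{*}})$ in $J_{inv}^{p,r}=\mathrm{Ext}_{\mathrm{MHS}}^{1}(\QQ(0),\ker(T-I))$, while Theorem~\ref{limThm}(b) identifies it with $J(\mathfrak{r}^{*})(\mathrm{AJ}_{X_{0}}(\imath^{*}\Xi))$.

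Next I would construct $\tilde\nu$ and verify the regularity claims. Vanishing of $\mathrm{sing}_{0}$ allows the flat lift $\nu_{\QQ}$ of $1\in\QQ_{\cS}(0)$ to be chosen in $\ker(N)$; the sharper statement that $\mathrm{lim}_{0}(\nu_{\Xi^{*}})$ lies in $J_{inv}^{p,r}$ (rather than merely in $\hat J_{inv}^{p,r}$) is exactly what allows $\nu_{\QQ}$ to be taken $T$-invariant, hence single-valued on $\Delta\subset\cS$ itself rather than only on the cyclic cover $\hat\cS$. Combined with a single-valued holomorphic lift $\nu_{F}$ of $1$ to $\cE_{\nu,e}$ with $\nu_{F}|_{\Delta^{*}}\in\cF^{0}\cE_{\nu}$, this exhibits $\tilde\nu:=\nu_{\QQ}-\nu_{F}$ as a single-valued holomorphic section of $\cV_{e}$ over $\Delta$ lifting $\nu_{\Xi^{*}}$. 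For any such admissible lift, $F_{\Xi,\omega}(s)=\langle\tilde\nu(s),\omega(s)\rangle$ is the canonical pairing $\cV\otimes\cV^{\vee}\to\QQ(0)$ evaluated on a section of $\cV_{e}$ against a section of $(\cV^{\vee})_{e}$; since that pairing is monodromy-invariant and $\tilde\nu$ is $T$-fixed, $F_{\Xi,\omega}$ is single-valued on $\Delta^{*}$, and because the non-unipotent summands $\oplus_{\lambda\in(0,1)}{\bf E}_{\lambda}$ of $(\cV^{\vee})_{e,0}$ pair trivially with the unipotent vector $\tilde\nu(0)$, it extends holomorphically across $0$ --- the same ``intersection-type'' mechanism that underlies the crucial second equality in $\S$\ref{sec1.2}. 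Independence of $\lim_{s\to 0}F_{\Xi,\omega}(s)$ modulo periods then follows because two admissible choices of $\tilde\nu$ differ by a section of $\cF^{0}\cV_{e}$ --- which pairs to $0$ against $\cF^{1}(\cV^{\vee})_{e}$ since the pairing has weight $0$ --- plus a flat class $\lambda\in\VV_{\QQ}\cap\ker(T-I)$, whose contribution $\langle\lambda,\omega(0)\rangle$ is a period of $\omega(0)$ over $H^{2p-r-1}(X_{0},\QQ(p))\cong\mathrm{im}(\mathfrak{r}^{*})$.

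Finally I would identify the limit. Pulling back to $\hat\cS$ (where $\hat\cX_{\Delta}\to\Delta$ is semistable) reduces the chain of equalities at the end of $\S$\ref{sec1.2} to the present setting and gives $\lim_{s\to 0}F_{\Xi,\omega}(s)=\langle\widetilde{\mathrm{lim}_{0}\nu},\mathrm{lim}_{0}\omega\rangle$; by Theorem~\ref{limThm}(b) this equals $\langle\mathfrak{r}^{*}\widetilde{\mathrm{AJ}_{X_{0}}(\imath^{*}\Xi)},\mathrm{lim}_{0}\omega\rangle$, and by the $\mathfrak{r}^{*}$--$\mathfrak{r}_{*}$ adjointness in the Clemens--Schmid sequence this is $\langle\widetilde{\mathrm{AJ}_{X_{0}}(\imath^{*}\Xi)},\mathfrak{r}_{*}(\mathrm{lim}_{0}\omega)\rangle=\langle\widetilde{\mathrm{AJ}_{X_{0}}(\imath^{*}\Xi)},\omega(0)\rangle$, the last equality being the definition of $\omega(0)$ in $\S$\ref{sec1.4} (namely $\mathfrak{r}_{*}(\mathrm{lim}_{0}\omega)$ up to the maps $P_{0},\hat{\mathfrak{r}}'$, compatibly since $P_{0}\circ\hat{\mathfrak{r}}'=\mathfrak{r}$). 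Modulo periods of $\omega(0)$ over $H^{2p-r-1}(X_{0},\QQ(p))$ this is precisely \eqref{limTNF}. The step I expect to be the main obstacle is the one just above: producing the $T$-invariant lift $\tilde\nu$ over $\Delta$ itself (so that $F_{\Xi,\omega}$ is a single-valued function of $s$) and confirming that this function is genuinely holomorphic --- not merely meromorphic or multivalued --- at $s=0$; once that is in place, the remainder is formal bookkeeping on top of Theorem~\ref{limThm}.
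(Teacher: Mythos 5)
Your proposal is correct and follows essentially the same route as the paper: the paper derives the Corollary ``at once'' from Theorem~\ref{limThm}(b), implicitly relying on the pairing identity $\langle\widetilde{\mathrm{lim}_{0}\nu},\mathrm{lim}_{0}\omega\rangle=\lim_{s\to0}\langle\tilde{\nu}(s),\omega(s)\rangle$ established in $\S$\ref{sec1.2} and on the definition of $\omega(0)$ in $\S$\ref{sec1.4}, which is exactly the assembly you carry out (your verification of the residue hypothesis, the construction of $\tilde{\nu}=\nu_{\QQ}-\nu_{F}$ with $\nu_{\QQ}\in\ker(N)$, and the $F^{0}$-versus-$\cF^{1}$ and flat-class accounting for the period ambiguity are just the details the paper leaves implicit).
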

Of course, this limiting value may lie in $\CC$ modulo some horrible
subgroup with lots of generators. This corollary is used most successfully
when one has a splitting
\[
H^{2p-r-1}(X_{0})(p)\overset{\eta}{\twoheadrightarrow}\QQ(p)\;\;\;\;\;\text{[dually }\QQ(0)\overset{\eta^{\vee}}{\hookrightarrow}H_{2p-r-1}(X_{0})\text{]}
\]
of the MHS on the singular fiber, with $\omega(0)=\eta^{\vee}(1)$:
then \eqref{limTNF} becomes
\[
\lim_{s\to0}F(s)\equiv J(\eta)\left(\mathrm{AJ}_{X_{0}}(\imath_{X_{0}}^{*}\Xi)\right)\in J(\QQ(p))\cong\CC/\QQ(p).
\]
The tempered Laurent polynomials of \cite{DoranKerr} give one method
of constructing such splittings, for maximal unipotent degenerations
of Calabi-Yau varieties.%
\footnote{In the special case where $\cV$ is a VHS of CY type, and $\omega$
is a section of the top Hodge filtrand, $F_{\Xi,\omega}$ is called
a \emph{truncated normal function}.%
}
\begin{example}
Consider the Fermat quintic family defined by
\[
f(t,\underline{\x}):=t\sum_{i=0}^{4}\x_{i}^{5}-\prod_{i=0}^{4}\x_{i}=0
\]
in $\mathbb{P}^{4}$ ($t$ in a small disk about $0$). Let $\cX_{\Delta}$
be its semistable reduction. (See \cite{GGK} for an explicit description;
$X_{0}$ is a union of $4$ $\mathbb{P}^{3}$'s blown up along Fermat
quintic curves.) Then the standard residue $(3,0)$-form $\{\omega_{t}\}_{t\in\Delta}$
produces a splitting $\mathbb{Q}(0)\hookrightarrow H_{3}(X_{0})$
over $\{0\}$, essentially because $f(t,\underline{\x})/\prod_{i=0}^{4}\x_{i}$
is tempered \cite{DoranKerr}. In \cite{GGK}, this was used to study
limits of usual normal functions (paired with $\omega$) in $\CC/\QQ(2)$.
\end{example}
Of course, there are many cases where $H_{2p-r-1}(X_{0})$ (or at
least its image by $\mathfrak{r}^{*}$) \emph{is} $\QQ(0)$, and here
the Corollary applies automatically; for examples, see \cite{JW}
and \cite{dS}.

\section{\bf Application to a conjecture from topological string theory} \label{S6}

In this section we apply Theorem \ref{limThm}(b) (or \eqref{ssdThm})
to an an algebraic $K_{2}$-class on a 2-parameter family $\cX$ of
genus-2 curves. The fibers $X_{z_{1},z_{2}}$ of our family are obtained
by compactifying
\[
Y_{z_{1},z_{2}}:=\{\phi(\x,\y)=0\}\subset\left(\CC^{*}\right)^{2}
\]
in the toric Fano surface $\PP_{\Delta}$ associated to the Newton
polytope $\Delta=\Delta(\phi)$, where 
\[
\phi(\x,\y):=x_{0}+x_{1}\x+x_{2}\y+x_{3}\x^{-1}\y^{-1}+x_{4}\x^{-2}\y^{-2}
\]
and
\[
z_{1}=\frac{x_{1}x_{2}x_{3}}{x_{0}^{3}}\;,\;\;\;\;\; z_{2}=\frac{x_{0}x_{4}}{x_{3}^{2}}.
\]
For the total space $\cX$ (resp. $\cY$), we take the union of the
$X_{\uz}$ (resp. $Y_{\uz}$) for $\uz\in(\PP_{z_{1}}^{1}\backslash\{z_{1}=0\})\times(\PP_{z_{2}}^{1}\backslash\{z_{2}=0\})$;
note that the base contains the ``conifold point'' 
\[
\underline{z}^{(0)}:=(z_{1}^{(0)},z_{2}^{(0)}):=\left(-\frac{1}{25},\frac{1}{5}\right).
\]
 (This is actually an ordinary double-point of the conifold \emph{curve}.)
In effect, we will be applying the Theorem to a 1-parameter slice
through this point, which is a 1-parameter semistable degeneration.

We shall begin by describing two vanishing cycles $\alpha_{1},\alpha_{2}\in H_{1}(X_{\uz},\ZZ)$,
corresponding respectively to $z_{1}=0$ and $z_{2}=0$. Fix a small
$\epsilon>0$. For the cycle $\alpha_{1}$ vanishing at $z_{1}=0$,
we reason that $z_{1}\to0$ with $z_{2}$ constant corresponds to
$x_{1}\to0$ (or $x_{2}\to0$); let $\alpha_{1}$ be the cycle pinched
to the node at $x_{1}=0$. If we make the coordinate change $u=\x^{-1}\y$,
$v=\y^{-1}$, then
\[
\phi=x_{0}+\phi_{1}:=x_{0}+\left\{ x_{1}u^{-1}v^{-1}+x_{2}v^{-1}+x_{3}uv^2 + x_{4}u^{2}v^{4}\right\} 
\]
and (for very small $|x_{1}|,|x_{2}|$) the image of $\alpha_{1}$
under $Tube:\, H_{1}(X)\to H_{2}(\PP_{\Delta}\backslash X)$ (dual
to $2\pi\ay Res$) is given by $\tau_{1}=\{|u|=|v|=\epsilon\}$. Similarly,
$z_{2}\to0$ and $z_{1}$ constant corresponds to $x_{4}\to0$. Taking
$\alpha_{2}$ to be the cycle pinched to the node there, the coordinate
change $\tilde{u}=\x^{3}\y^{2}$, $\tilde{v}=\x^{-2}\y^{-1}$ makes
\[
\x\y\phi=\tilde{u}\tilde{v}\phi=x_{3}+\phi_{2}:=x_{3}+\left\{ x_{0}\tilde{u}\tilde{v}+x_{1}\tilde{v}^{-1}+x_{2}\tilde{u}^{3}\tilde{v}^{4}+x_{4}\tilde{u}^{-1}\tilde{v}^{-1}\right\} ;
\]
and (for very small $|x_{4}|,|x_{1}|$) $Tube(\alpha_{2})=\tau_{2}:=\{|\tilde{u}|=|\tilde{v}|=\epsilon\}$.
It should be emphasized that \emph{both} cycles vanish at $\uz=\underline{0}$,
but (as we describe below) \emph{neither} cycle vanishes at $\uz=\uz^{(0)}$.

By rescaling $\phi,\x,\y,\epsilon$, etc., we may both retain the
descriptions $Tube(\alpha_{i})=\tau_{i}$ and have $x_{1}=x_{2}=x_{4}=1$,
so that $\phi$ is tempered (and $z_{1}=x_{3}/x_{0}^{3}$, $z_{2}=x_{0}/x_{3}^{2}$).
This implies that the symbol $\{\x,\y\}\in \CH^{2}(\cY,2)$ lifts to
a class $\Xi\in \CH^{2}(\cX,2)$. The images of the $\Xi_{\underline{z}}:=\imath_{X_{\underline{z}}}^{*}\Xi$
under the Abel-Jacobi maps $$\mathrm{AJ}:\, \mathrm{CH}^2(X_{\underline{z}},2)\to H^1 (X_{\underline{z}},\mathbb{C}/\mathbb{Q}(2) )$$for 
\[
\underline{z}\in U:=\left\{ (z_{1},z_{2})\,|\,0<|z_{1}|<\tfrac{1}{25},\;0<|z_{2}|<\tfrac{1}{5}\right\} 
\]
may be computed as in \cite[\S 4.2]{DoranKerr} for elliptic curves,
suitably modified for genus 2 and two vanishing cycles. We now briefly
sketch the procedure, using the regulator current notation of \cite[$\S$1]{DoranKerr}.\footnote{In brief, we have $R\{f,g\}=\log(f)\tfrac{dg}{g} - 2\pi\ay \log(g)\delta_{T_f}$ and $R\{f,g,h\}=\log(f)\tfrac{dg}{g}\wedge\tfrac{dh}{h} + 2\pi\ay \log(g)\tfrac{dh}{h}\cdot \delta_{T_f} + (2\pi\ay)^2\log(h)\delta_{T_f \cap T_g}$, where $T_f = f^{-1}(\mathbb{R}_{<0})$ (oriented from $-\infty$ to $0$) and $\log(f)$ is the (discontinuous) branch with imaginary part in $[-\pi,\pi)$.}

Referring to the toric coordinate changes above, note the equality
of symbols $\{u,v\}=\{\x,\y\}=\{\tilde{u},\tilde{v}\}$ in $K_{2}(\mathbb{G}_{m}^{2})$,
hence in $\CH^{2}(Y_{\uz},2)$ (for $Y_{\uz}$ smooth). By temperedness,%
\footnote{Otherwise there would be a contribution from $Res_{v=0}R\{\phi,u,v\}$,
and not just the one shown (from $Res_{\phi=0}$); the detailed argument
is exactly as in \cite[\S 4.2]{DoranKerr}.%
} for sufficiently small nonzero $|z_{1}|,|z_{2}|$ we have \begin{align*}
\frac{1}{(2\pi\ay)^2} \int_{\tau_1} R\{ \phi,u,v\} &\underset{\QQ(1)}{\equiv}\frac{1}{2\pi\ay}\int_{\alpha_1} R\{u,v\}\underset{\QQ(1)}{\equiv}\frac{1}{2\pi\ay}\int_{\alpha_1} R\{\x,\y\}
\\
&=\frac{1}{2\pi\ay}\mathrm{AJ}(\Xi_{\uz})(\alpha_1)\; ;
\end{align*}and similarly\begin{equation*}
\frac{1}{(2\pi\ay)^2} \int_{\tau_2} R\{ \tilde{u}\tilde{v}\phi,\tilde{u},\tilde{v}\} \underset{\QQ(1)}{\equiv}\frac{1}{2\pi\ay}\int_{\alpha_2} R\{\tilde{u},\tilde{v}\} \underset{\QQ(1)}{\equiv}\frac{1}{2\pi\ay}\mathrm{AJ}(\Xi_{\uz})(\alpha_2)\; .
\end{equation*}Moreover, for small $\arg(z_{1})$ and $\arg(z_{2})$ we have $T_{\phi}\cap\tau_{1}=\emptyset$,
and so $R\{\phi,u,v\}=\log\phi\frac{du}{u}\wedge\frac{dv}{v}$. Noting
that $z_{1}^{2}z_{2}=x_{0}^{-5}$, this yields \begin{equation}\label{eq1A}\tfrac{1}{2\pi\ay} \mathrm{AJ}(\Xi_{\underline{z}})(\alpha_1)  \underset{\QQ(1)}{\equiv} \tfrac{1}{(2\pi\ay)^2} \int_{\tau_1}\log(x_0 + \phi_1)\frac{du}{u}\wedge\frac{dv}{v} 
\end{equation}\begin{equation}\label{eq1B}\underset{\QQ(1)}{\equiv}\log(x_0) - \sum_{n>0} (-1)^n \frac{x_0^{-n}}{n} \left[ \left( x_1 u^{-1}v^{-1} +x_2 v^{-1} + x_3 uv^2 +x_4 u^2 v^4 \right)^n \right]_{\underline{0}} 
\end{equation}\begin{equation}\label{eq1C}= -\frac{1}{5}\log(z_1^2 z_2)-\sum_{m,r\geq 0}{}'\frac{(5m+3r)!(-z_1)^r (-z_1^2 z_2)^m}{((2m+r)!)^2 m! r! (5m+3r)} \; ,
\end{equation}where $[\;\;]_{\underline{0}}$ takes the constant term of a Laurent
polynomial, and $\sum'$ means to omit $(m,r)=(0,0)$. For $\alpha_{2}$,
the analogous computation is \begin{equation}\label{eq2A}\tfrac{1}{2\pi\ay} \mathrm{AJ}(\Xi_{\underline{z}})(\alpha_2)  \underset{\QQ(1)}{\equiv} \tfrac{1}{(2\pi\ay)^2} \int_{\tau_2}\log(x_3 +\phi_2)\frac{d\tilde{u}}{\tilde{u}}\wedge\frac{d\tilde{v}}{\tilde{v}} 
\end{equation}\begin{equation}\label{eq2B}\underset{\QQ(1)}{\equiv}\log(x_3) - \sum_{n>0} (-1)^n \frac{x_3^{-n}}{n} \left[ \left( x_0 \tilde{u}\tilde{v}+ x_1\tilde{v}^{-1} +x_2 \tilde{u}^3\tilde{v}^4 + x_4 \tilde{u}^{-1}\tilde{v}^{-1}  \right)^n \right]_{\underline{0}} 
\end{equation}\begin{equation}\label{eq2C}= -\frac{1}{5}\log(z_1 z_2^3)-\sum_{m,r\geq 0} {}' \frac{(5m+2r)!(-z_1 z_2^3)^m (-z_2)^r}{(3m+r)! r! (m!)^2 (5m+2r)} \; .
\end{equation}The series in \eqref{eq1C} and \eqref{eq2C} converge absolutely
on $U$, hence compute $\frac{1}{2\pi\ay}\mathrm{AJ}(\Xi_{\uz})(\alpha_{i})$
($i=1,2$) there, and can be shown to converge to their limit at $\uz=\uz^{(0)}$.
Write $N_{i}=\log T_{i}$ for the monodromy logarithms about the 2
local components of the discriminant locus at $\uz^{(0)}$, and $N:=N_{1}+N_{2}$.
Then $\alpha_{1}$ and $\alpha_{2}$ generate $\text{coker}(N)\cong(\ker(N))^{\vee}$,
hence \eqref{eq1C} and \eqref{eq2C} are sufficient to capture the
limit of the normal function $\nu$ associated to $\Xi$ at $\uz^{(0)}$.

Turning to the right-hand side of \eqref{ssdThm}, we may write the
formula for the limiting curve $X_{\uz^{(0)}}$ as\begin{equation}\label{eq3}0 = \x + \y + \x^{-2}\y^{-2} -5 \x^{-1}\y^{-1} + 5 .
\end{equation}The two singularities of this curve are
\[
q_{1}=(-\varphi,-\varphi)\,,\;\;\;\;\; q_{2}=(-\tilde{\varphi},-\tilde{\varphi})\,,
\]
where $\varphi:=\tfrac{1}{2}(1+\sqrt{5})$ and $\tilde{\varphi}:=\tfrac{1}{2}(1-\sqrt{5})$.
The cycles $\gamma_{1},\gamma_{2}$ passing through these nodes \[\includegraphics[scale=0.55]{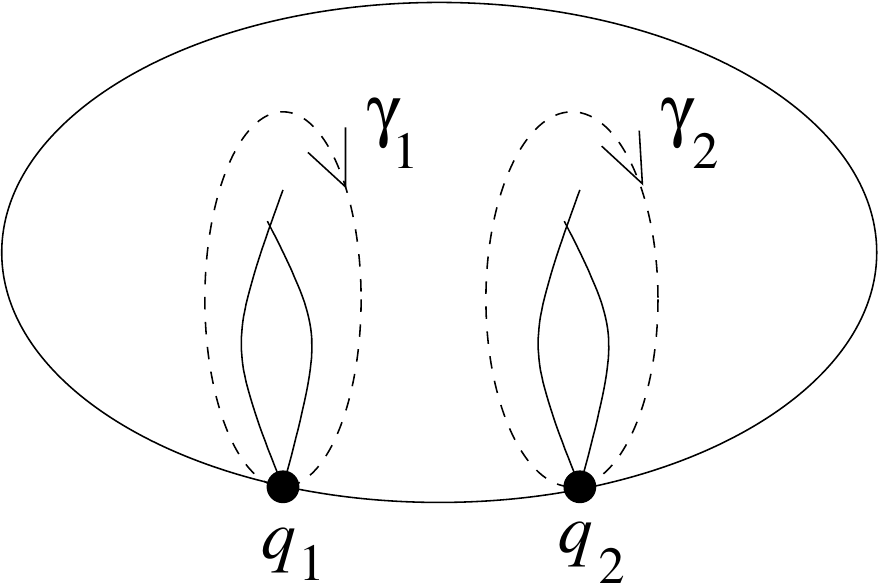}\]are
the images of $\alpha_{1}$ and $\alpha_{2}$ in $H_{1}(X_{\uz^{(0)}})$
under $\mathfrak{r}_{*}$. Consider the two uniformizations of $X_{\uz^{(0)}}$
by $\PP^{1}$:\begin{equation}\label{eq4}\x_1(t) = -\varphi\frac{\left( 1-\frac{\zeta^2}{t} \right)^3}{\left( 1-\frac{1}{\zeta^2 t}\right)^2 \left( 1-\frac{1}{t} \right) }\; ,\;\;\;\;\; \y_1 (t) = -\varphi\frac{\left( 1-\zeta^2 t \right)^3}{\left( 1-\frac{t}{\zeta^2}\right)^2 \left( 1-t \right) } \; ,
\end{equation}and\begin{equation}\label{eq5}\x_2(t) = -\tilde{\varphi}\frac{\left( 1-\frac{\zeta}{t} \right)^3}{\left( 1-\frac{1}{\zeta t}\right)^2 \left( 1-\frac{1}{t} \right) }\; ,\;\;\;\;\; \y_2 (t) = -\tilde{\varphi}\frac{\left( 1-\zeta t \right)^3}{\left( 1-\frac{t}{\zeta^2}\right)^2 \left( 1-t \right) } \; ,
\end{equation}where $\zeta:=e^{\frac{2\pi\ay}{5}}$. The first one $t\mapsto(\x_{1}(t),\y_{1}(t))$
maps $t=0,\infty$ to $q_{1}$; the second maps $0,\infty\mapsto q_{2}$:
so they send the path from ``$-\infty$ to $0$'' to $\gamma_{1}$
resp. $\gamma_{2}$. This allows us to ``plug in'' to the formula
from \cite[\S 6.2]{DoranKerr}, which assigns a divisor $\mathcal{N}$
on $\PP^{1}\backslash\{0,\infty\}$ to each uniformization. In the
present case,
\[
\mathcal{N}_{2}=-6[\zeta]+9[\zeta^{2}]+4[\zeta^{3}]+4[\zeta^{4}]
\]
and
\[
\mathcal{N}_{1}=-6[\zeta^{2}]+9[\zeta^{4}]+4[\zeta]+4[\zeta^{3}].
\]
Working modulo the scissors congruence relations
\[
[\xi]+[\tfrac{1}{\xi}]=0,\;[\xi]+[\bar{\xi}]=0,\;[\xi]+[1-\xi]=0,\;\text{and}
\]
\[
[x]+[y]+[\tfrac{1-x}{1-xy}]+[1-xy]+[\tfrac{1-y}{1-xy}]=0,
\]
we have\begin{equation}\label{eq6}\left\{ \begin{array}{c}\mathcal{N}_{1}\equiv-10[\zeta^{2}]+5[\zeta^{4}]\equiv10[-\zeta\tilde{\varphi}]\equiv10[\zeta\varphi]\\\mathcal{N}_{2}\equiv-10[\zeta]+5[\zeta^{2}]\equiv10[-\zeta^{3}\varphi]\equiv10[e^{\frac{\pi\ay}{5}}\varphi] \; . \end{array}\right.
\end{equation}But according to {[}loc.cit.{]} we then have (using \eqref{eq6})\begin{equation}\label{eq7}\text{Re}\left(\tfrac{1}{2\pi\ay} \mathrm{AJ}(\Xi_{\uz^{(0)}})(\gamma_1) \right) = \tfrac{1}{2\pi} D_2 (\mathcal{N}_1) = \tfrac{5}{\pi} D_2 (\zeta\varphi)\; ,
\end{equation}\begin{equation}\label{eq8}\text{Re}\left(\tfrac{1}{2\pi\ay} \mathrm{AJ}(\Xi_{\uz^{(0)}})(\gamma_2) \right) = \tfrac{1}{2\pi} D_2 (\mathcal{N}_2) = \tfrac{5}{\pi} D_2 (e^{\frac{\pi\ay}{5}}\varphi)\; .
\end{equation}Here $\Xi_{\uz^{(0)}}$ denotes the pullback motivic cohomology class
on $X_{\uz^{(0)}}$, and $D_2(z)=\mathrm{Im}(\mathrm{Li}_2(z))+\arg(1-z)\log|z|$ is the Bloch-Wigner function.

By \eqref{ssdThm}, we have that \eqref{eq7} {[}resp. \eqref{eq8}{]}
is equal to the real part of the $\uz\to\uz^{(0)}$ limit of \eqref{eq1C}
{[}resp. \eqref{eq2C}{]}, which yields precisely the relations \begin{equation}\label{eq9}\tfrac{5}{\pi}D_2(e^{\frac{2\pi\ay}{5}}\varphi) = \log(5) - \sum_{m,r\geq 0}{}'\frac{(-1)^m (5m+3r)!}{((2m+r)!)^2 m! (5m+3r) 5^{5m+2r}}
\end{equation}and\begin{equation}\label{eq10}\tfrac{5}{\pi}D_2(e^{\frac{\pi\ay}{5}}\varphi) = \log(5) - \sum_{m,r\geq 0}{}'\frac{(-1)^r (5m+2r)!}{(3m+r)! r! (m!)^2 (5m+2r)5^{5m+r}}
\end{equation} conjectured by Codesido, Grassi and Marino \cite[(4.106)]{Ma} as
a test (for the mirror $\CC^{3}/\ZZ^{5}$ geometry) of the correspondence
between spectral theory and enumerative geometry proposed in \cite{GHM}.


\begin{thebibliography}{CGM}

\bibitem[As]{As} M. Asakura, {\em Surjectivity of $p$-adic regulators on $K_2$ of Tate curves}, Invent. Math. 165 (2006), 267-324.

\bibitem[BL]{Bi-La} C. Birkenhake, H. Lange: {\em Complex abelian varieties}, Grundlehren der math. Wissenschaften, Vol. 302, 2nd edition, Springer Verlag (2004).

\bibitem [B1]{Bloch} S. Bloch: {\em Higher regulators, algebraic K-theory, and Zeta functions of elliptic curves}, Preprint,
   University of California at Irvine (1979), 131 pages. 

\bibitem[B2]{Bloch2} 
 S. Bloch: {\em Lecture Notes on Algebraic Cycles}, Duke University Math. Series IV, Durham, N.C. (1980). 
   
\bibitem[B3]{Bloch3}
 S. Bloch: {\em Algebraic cycles and higher K-theory}, Advances in Math. \textbf{61} (1986), 267--304.
 
\bibitem[B4]{Bloch4}
 S. Bloch: {\em The moving lemma for higher Chow groups}, J. Algebraic Geom. \textbf{3} (1994), 537--568.

\bibitem[B5]{B5}
 S. Bloch: {\em Algebraic cycles and the Beilinson conjectures}, Contemp. Math. \textbf{58} (1) (1986), 65-79.

\bibitem[BLR]{BLR}
S. Bosch, W. L\"utkebohmert, M. Raynaud:  {\em N\'eron models}, Ergeb. Math. Grenzgeb. (3), 21, Springer-Verlag, Berlin, 1990.

\bibitem[CE]{Ca-Es} L. Caporaso, E. Esteves:
{\em On Abel maps of stable curves.} Michigan Math. J. 55 (2007), no. \textbf{3}, 575--607. 

\bibitem[Ce]{Ceresa} G. Ceresa, {\em $C^+$ is not equivalent to $C^-$ in its Jacobian}, Ann. of Math. (2) 117 (1983), no. 2, 285-291.

\bibitem[CGM]{Ma}S. Codesido, A. Grassi and M. Mari\~no, \emph{Spectral
theory and mirror curves of higher genus}, Ann. Henri Poincar\'e {\bf 18} (2017), no. 2, 559-622.

\bibitem[Co]{Collino}
A. Collino: {\em Griffiths' infinitesimal invariant and higher $K$-theory on hyperelliptic Jacobians}, J. Algebraic Geom.  \textbf{6}  (1997), 393--415.

\bibitem[CF]{Co-Fd} A. Collino, N. Fakhruddin: {\em Indecomposable Higher Chow cycles on Jacobians},
Math. Zeitschrift \textbf{240}  (2002), 111--139.

\bibitem[CvG]{Colombo-van-Geemen}
E. Colombo, B. van Geemen: {\em Note on curves in a Jacobian},  Compositio Math.  \textbf{88}  (1993), 333--353.

\bibitem[Cb]{Colombo} E. Colombo: {\em The mixed Hodge structure on the fundamental group of hyperelliptic curves and higher cycles}, 
Journal of Alg. Geometry \textbf{11} (2002), 761--790. 

\bibitem[dS]{dS}G. da Silva Jr., \emph{On the arithmetic of Landau-Ginzburg
model of a certain class of threefolds}, preprint, 2016, arXiv:1601.00990.

\bibitem[dJL]{dJ-L} R. de Jeu,  J. D. Lewis,  (with an appendix by M. Asakura),
{\em Beilinson's Hodge conjecture for smooth varieties.} 
 J. K-Theory {\bf 11} (2013), no. 2, 243-282. 

\bibitem[DK]{DoranKerr}C. Doran and M. Kerr, \emph{Algebraic K-theory
of toric hypersurfaces}, Commun. in Number Theory and Physics 5 (2011), no. 2, 397-600.

\bibitem[Fu]{Fulton} W. Fulton: {\em Intersection theory}, Springer-Verlag, Berlin, 1984.

\bibitem[Ge]{Ge} T. Geisser, {\em Homological descent theory for motivic homology theoreies}, Homology Homotopy Appl. 16 (2) (2014), 33-43.

\bibitem[GHM]{GHM}A. Grassi, Y. Hatsuda and M. Mari\~no, \emph{Topological
strings from quantum mechanics}, Ann. Henri Poincar\'e {\bf 17} (2016), no. 11, 3177-3235.

\bibitem[GGK]{GGK} M. Green, P. Griffiths, M. Kerr: {\em N\'eron models and limits of Abel-Jacobi mappings}, Compos. Math.  146  (2010),  no. \textbf{2}, 288--366.

\bibitem[GS]{GrossSchoen}
B.H. Gross, C. Schoen: {\em The modified diagonal cycle on the triple product of a pointed curve},  Ann. Inst. Fourier (Grenoble)  \textbf{45}  (1995),  no. \textbf{3}, 649--679.

\bibitem[Ha]{Hanamura} M. Hanamura: {\em Homological and cohomological motives of algebraic varieties}, Inven. Math. 142 (2000), no. \textbf{2}, 319-349.

\bibitem[JW]{JW}R. Jefferson, J. Walcher: \emph{Monodromy of inhomogeneous
Picard-Fuchs equations}, CNTP 8 (2014), no. 1, 1-40.

\bibitem[KaL]{SJK-L}  S. J. Kang,  J. D. Lewis: {\em Beilinson's Hodge
conjecture for $K_{1}$ revisited.}  In: Cycles, Motives and Shimura varieties, 197-215, 
Tata Inst. Fund. Res. Stud. Math., Tata Inst. Fund. Res., Mumbai, 2010.

\bibitem[Ke]{Ke-K3} M. Kerr: {\em Indecomposable $K_1$ of elliptically fibered $K3$ surfaces: a tale of two cycles},
in ``Arithmetic and Geometry of $K3$ surfaces and $CY$ threefolds (Laza, Schuett, Yui eds.)'', Fields Inst. Commun. \textbf{67}, Springer, New York, 2013, 387-409.

\bibitem[KLM]{KLM} M. Kerr, J. D. Lewis, S. M\"uller-Stach:
{\em The Abel-Jacobi map for higher Chow groups,}
Comp. Math. {\bf 142}, (2006), 374-396.

\bibitem[KL]{KL} M. Kerr, J. D. Lewis, {\em The Abel-Jacobi map for
higher Chow groups, II,}  Invent. Math. {\bf 170}, (2007), no. 2, 355-420.

\bibitem[L1]{Levine} M. Levine: {\em Techniques of localization in the theory of algebraic cycles}, Journal of Algebraic Geometry \textbf{10}  (2001),  no. 2, 299--363. 

\bibitem[L2]{Levine2} M. Levine: {\em Bloch's higher Chow groups revisited}, Ast\'erisque \textbf{226} (1994), 235-320.

\bibitem[Le1]{lew} J. D. Lewis: {\em A filtration on the Chow groups of a complex algebraic variety,}
Compositio Math. {\bf 128} (2001), no. 3, 299-322.

\bibitem[Le2]{lew2} J. D. Lewis: {\em Lectures on Hodge theory and algebraic cycles}, Commun. Math. Stat. \textbf{4} (2016), no. 2, 93-188.

\bibitem[Li]{Lieberman} D. Lieberman: {\em Numerical and homological equivalence of algebraic cycles on Hodge manifolds.}  Amer. J. Math.  \textbf{90}  (1968), 366--374.

\bibitem[PS]{PS} C. Peters and J. Steenbrink, {\em Mixed Hodge Structures}, Springer-Verlag, Berlin, 2008.

\bibitem[MS]{SaitoSMS} S. M\"uller-Stach, S. Saito: {\em On $K_1$ and $K_2$ of algebraic surfaces}, Appendix by A. Collino, K-theory \textbf{30} (2003), 37--69.

\bibitem[Th]{Thomason} R. W. Thomason: {\em Les $K$-groupes d'un sch\'ema \'eclate et un formule d'intersection exc\'edentaire}, Inven. Math. 112 (1993), no. \textbf{1}, 195-215.

\bibitem[We]{Wei} C. Weibel, {\em An introduction to homological algebra}, Cambridge Univ. Press, Cambridge, 1994.
 
\bibitem[Zu]{Z} S. Zucker, {\em Generalized intermediate Jacobians and the theorem on normal functions,}
Inventiones math. {\bf 33}, 185--22 (1976).


 
\end{thebibliography}
\end{document}